\title[Existence of lc modifications and its 
applications]{Existence of log canonical modifications and its applications}
\author{Osamu Fujino and Kenta Hashizume}
\date{2022/4/28, version 0.21}
\subjclass[2010]{Primary 14E30; Secondary 14J45, 32Q45}
\keywords{dlt blow-ups, log canonical modifications, inversion of 
adjunction, lengths of extremal rational curves, Mori hyperbolicity, quasi-log schemes}
\address{Osamu Fujino \\ Department of Mathematics, Graduate School of Science, 
Kyoto University, Kyoto 606-8502, Japan}
\email{fujino@math.kyoto-u.ac.jp}
\address{Kenta Hashizume \\ Graduate School of 
Mathematical Sciences, 
The University of Tokyo, 3-8-1 Komaba Meguro-ku 
Tokyo 153-8914, Japan}
\email{hkenta@ms.u-tokyo.ac.jp}
\newtheorem{thm}{Theorem}[section]
\newtheorem{lem}[thm]{Lemma}
\newtheorem{cor}[thm]{Corollary}
\theoremstyle{definition}
\newtheorem{step}{Step}
\newtheorem{case}{Case}
\newtheorem{ex}[thm]{Example}
\newtheorem{defn}[thm]{Definition}
\newtheorem{rem}[thm]{Remark}
\newtheorem{nota}[thm]{Notation}
\newtheorem*{ack}{Acknowledgments}  
\begin{document}

\maketitle 

\begin{abstract}
The main purpose of this paper is to establish some useful 
partial resolutions of singularities for pairs from the minimal model 
theoretic viewpoint. 
We first establish the existence of log canonical modifications 
of normal pairs under some suitable assumptions. 
Then we recover Kawakita's inversion of adjunction on log canonicity 
in full generality. We also discuss the existence of 
semi-log canonical modifications for demi-normal pairs and 
construct dlt blow-ups with several extra good properties. 
As an application, we study lengths of extremal rational curves. 
\end{abstract}

\tableofcontents

\section{Introduction}\label{x-sec1}

The main purpose of this paper is to establish some useful 
partial resolutions of singularities for pairs from the minimal model 
theoretic viewpoint. 

Let us start with an elementary example. 
Let $X$ be a normal surface. Then it is well known that 
there exists a unique minimal resolution of 
singularities $f\colon Y\to X$ of $X$. 
It plays a crucial role for the study of singularities 
of $X$. Let $g\colon Z\to X$ be any resolution of singularities of 
$X$. Then we can see $f\colon Y\to X$ as a relative minimal 
model of $Z$ over $X$. When $X$ is a higher-dimensional 
quasi-projective variety and $g\colon Z\to X$ is a resolution of 
singularities of $X$, we can always construct a relative 
minimal model $f\colon Y\to X$ of $Z$ over $X$ by running 
a minimal model program (see \cite{bchm}). Unfortunately, 
however, $Y$ may be singular. In general, $Y$ has $\mathbb Q$-factorial 
terminal singularities. Since the singularities of $Y$ is milder than 
those of $X$, $f\colon Y\to X$ sometimes plays an important 
role as a partial resolution of singularities of $X$.

In the recent study of 
higher-dimensional algebraic varieties, we know that 
it is natural and useful to 
treat pairs. 
Let us consider a quasi-projective log canonical pair $(X, \Delta)$. 
Based on \cite{bchm}, Hacon constructed a projective birational 
morphism $f\colon (Y, \Delta_Y)\to (X, \Delta)$ from a 
$\mathbb Q$-factorial divisorial log terminal pair $(Y, \Delta_Y)$ 
with $K_Y+\Delta_Y=f^*(K_X+\Delta)$ (see \cite{fujino-funda} and 
\cite{kollar-kovacs}). 
We usually call $f\colon (Y, \Delta_Y)\to (X, \Delta)$ a 
{\em{dlt blow-up}} of $(X, \Delta)$. By dlt blow-ups, many problems on 
log canonical pairs can be reduced to those on 
$\mathbb Q$-factorial divisorial log terminal pairs. 
We can see $f\colon (Y, \Delta_Y)\to (X, \Delta)$ as a 
partial resolution of singularities of $(X, \Delta)$ from the 
minimal model theoretic viewpoint. 

In this paper, we are mainly interested in pairs whose 
singularities are not necessarily log canonical. 

\subsection{Existence of log canonical modifications}
We first establish the existence of log canonical 
modifications, which is a kind of partial resolution of singularities of 
pairs from the minimal model theoretic viewpoint. 

\begin{thm}[Log canonical modifications]\label{x-thm1.1}
Let $X$ be a normal variety 
and let $\Delta$ be an effective $\mathbb R$-divisor 
on $X$ such that $K_X+\Delta$ is $\mathbb R$-Cartier. 
Let $B$ be an $\mathbb R$-divisor on $X$ 
such that the coefficients of $B$ belong to $[0,1]$, 
$\Delta-B$ is effective and ${\rm Supp} B={\rm Supp} \Delta$.
Then there exists a log canonical modification of $X$ and $B$, 
that is, a log canonical pair $(Y, B_Y)$ and 
a projective birational morphism 
$f\colon Y\to X$ such that 
\begin{itemize}
\item[\em{(i)}]
the divisor $B_{Y}$ is the sum of $f^{-1}_{*}B$ and 
the 
reduced $f$-exceptional divisor $E$, that is, $E=\sum _j E_j$ 
where $E_j$ are the $f$-exceptional prime divisors on $Y$, and
\item[\em{(ii)}]
the divisor $K_{Y}+B_{Y}$ is $f$-ample. 
\end{itemize}
\end{thm}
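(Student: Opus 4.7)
The plan is to construct $f\colon Y\to X$ as the relative log canonical (ample) model over $X$ of a naturally associated dlt pair on a log resolution. After reducing to the case where $X$ is quasi-projective, take a projective log resolution $g\colon Z\to X$ of $(X,\Delta)$ with $\Supp(g^{-1}_*\Delta)\cup\Exc(g)$ simple normal crossing, let $E=\sum_j E_j$ be the reduced sum of the $g$-exceptional prime divisors, and set $B_Z:=g^{-1}_*B+E$. Because the coefficients of $B$ lie in $[0,1]$ and $\Supp B=\Supp\Delta$, the pair $(Z,B_Z)$ is log smooth, and in particular $\mathbb Q$-factorial dlt.

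The discrepancy formula for $g$ applied to $(X,\Delta)$ yields the identity
\begin{equation*}
K_Z+B_Z=g^*(K_X+\Delta)-g^{-1}_*(\Delta-B)+\sum_j\bigl(a(E_j,X,\Delta)+1\bigr)E_j.
\end{equation*}
Let $F^+$ be the effective part of the exceptional sum on the right and $F^-$ be minus its non-effective part, so that $F^+$ and $F^-$ are both effective and $g$-exceptional. Then $K_Z+B_Z+F^-+g^{-1}_*(\Delta-B)\sim_{\mathbb R,X}F^+$, so over $X$ the divisor $K_Z+B_Z$ is $\mathbb R$-linearly equivalent to the effective $g$-exceptional divisor $F^+$ modulo effective terms supported on $B_Z$. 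I would then run the $(K_Z+B_Z)$-MMP over $X$ with scaling of a general $g$-ample divisor, take the resulting relative ample model $f\colon Y\to X$, and let $B_Y$ be the birational transform of $B_Z$. Because an MMP never creates new exceptional divisors, the $f$-exceptional prime divisors on $Y$ are precisely the surviving $g$-exceptional primes, and each retains coefficient $1$ in $B_Y$; hence $B_Y$ equals the sum of $f^{-1}_*B$ and the reduced $f$-exceptional divisor. Since a $(K+B)$-MMP preserves log canonicity of a dlt pair, $(Y,B_Y)$ is lc, and $K_Y+B_Y$ is $f$-ample by definition of the ample model.

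The main obstacle is the existence and termination of this relative MMP. Standard BCHM produces minimal models cleanly only for klt pairs that are big over the base, whereas $(Z,B_Z)$ has reduced boundary components and $K_Z+B_Z$ need not be pseudo-effective over $X$. The identity above is what saves the argument: because $K_Z+B_Z$ is $\mathbb R$-linearly equivalent over $X$ to the effective $g$-exceptional $F^+$ after adding effective terms supported on $B_Z$, one can slightly perturb the reduced part of $B_Z$ to obtain a klt pair to which BCHM applies, produce a relative minimal model for the perturbation, then let the perturbation parameter tend to zero using finiteness of relative models. Additional care is required for $\mathbb R$-coefficients and for passing from the minimal model to the ample model, but these have become standard techniques in the relative MMP.
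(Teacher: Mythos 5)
Your construction up to the point of running the MMP matches the paper's skeleton (pass to a birational model, take the strict transform of $B$ plus the reduced exceptional divisor, run a relative MMP over $X$, take the ample model, and check the two bullet conditions). But the heart of the theorem is precisely the step you delegate to a perturbation: why does the \emph{lc, non-klt} pair $(Z,B_Z)$ have a relative \emph{good} minimal model (equivalently, a relative log canonical model) over $X$? Your proposal to "perturb the reduced part of $B_Z$ to a klt pair, apply BCHM, and let the perturbation tend to zero using finiteness of relative models" does not work. BCHM finiteness of ample models requires the boundaries to contain a fixed relatively big/ample part, which is not available here; and, more fundamentally, the ample models of klt perturbations $B_Z-\epsilon E$ need not converge to an ample model of $(Z,B_Z)$. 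A concrete failure: let $X$ be a cone over an elliptic curve, $\Delta=B=0$, and $g\colon Z\to X$ the blow-up with exceptional elliptic curve $E_0$, so $B_Z=E_0$ and $K_Z+E_0=g^*K_X$. For every small $\epsilon>0$ the pair $(Z,(1-\epsilon)E_0)$ is klt with $K_Z+(1-\epsilon)E_0$ already $g$-ample, so the ample model of each perturbation is $Z$ itself; but $K_Z+E_0$ is $g$-trivial, not $g$-ample, and the lc modification of $(X,0)$ is $X$, not $(Z,E_0)$. So the limit of the perturbed models is simply the wrong object, and no finiteness statement repairs this. A further warning sign is that your argument scarcely uses the hypotheses $\Delta-B\ge 0$, $\Supp B=\Supp\Delta$, and $K_X+\Delta$ $\mathbb R$-Cartier; if the perturbation argument were valid it would yield lc modifications for arbitrary boundaries with no auxiliary $\Delta$ at all, which is exactly the open problem these hypotheses are designed to avoid.

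The paper's proof supplies the missing engine in two ways that differ from your setup. First, instead of an arbitrary log resolution it uses a dlt blow-up as in Theorem \ref{x-thm2.10}, on which \emph{every} exceptional divisor has discrepancy $\le -1$; this kills the positive part $F^+$ in your identity and gives $K_Z+B_Z\sim_{\mathbb R,X}-D_Z$ with $D_Z=\Delta_Z-B_Z\ge 0$ and $\Supp D_Z\subset\Supp B_Z$. (With $F^+\neq 0$ even the paper's machinery would not apply, so this choice is not cosmetic.) Second, after the proportional rescaling $K_Z+B_Z-tD_Z\sim_{\mathbb R,X}(1+t)(K_Z+B_Z)$, the existence of a good minimal model over $X$ is obtained from Theorem \ref{x-thm3.1} (Hashizume's result for lc pairs admitting $D\ge 0$ with $-(K+B+D)$ relatively nef and $(X,B+aD)$ lc), which goes genuinely beyond BCHM; the paper stresses this in the introduction. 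You would also need to carry out, rather than assert, the reduction to quasi-projective $X$ and the gluing over an affine cover via the uniqueness statement (Lemma \ref{x-lem3.2}), though that part is routine.
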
 

Let us see an application of Theorem \ref{x-thm1.1}. 

\begin{ex}\label{x-ex1.2}
Theorem \ref{x-thm1.1} shows that every 
pair consisting of a quasi-projective 
variety $X$ and a boundary 
$\mathbb{R}$-divisor $\Delta_X$ always has an 
effective $\mathbb{R}$-divisor $D$ 
on $X$ whose coefficients are 
arbitrarily small such that there exists a 
log canonical modification of $X$ and $\Delta_X+D$. 
We note that $K_X+\Delta_X$ is not assumed to be 
$\mathbb R$-Cartier. 
Indeed, we can 
pick $A\geq 0$ so that $K_{X}+\Delta_X+A$ is an ample 
$\mathbb R$-Cartier divisor with ${\rm Supp}(\Delta_X+A)
\ne {\rm Supp \Delta_X}$. 
Then, for any $\epsilon>0$, we may find $D\geq 0$ 
such that 
${\rm Supp}(\Delta_X+D)={\rm Supp}(\Delta_X+A)$, 
$A\geq D$, the coefficients of $\Delta_X+D$ belong to 
$[0, 1]$, and 
the coefficients of $D$ 
are less than $\epsilon$. 
Then, by Theorem \ref{x-thm1.1}, we see 
that there exists 
a log canonical modification of $X$ and 
$\Delta_X+D$. 
\end{ex}

The following special case of 
Theorem \ref{x-thm1.1} is important for applications of this paper. 

\begin{thm}\label{x-thm1.3}
Let $X$ be a normal variety 
and let $\Delta$ be an effective $\mathbb R$-divisor 
on $X$ such that $K_X+\Delta$ is $\mathbb R$-Cartier. 
We put 
\begin{equation*} 
B=\Delta^{<1}+{\rm Supp}\Delta^{\geq 1}. 
\end{equation*}  
Then there exists a log canonical modification of $X$ and $B$. 
\end{thm}

Theorem \ref{x-thm1.1} is a generalization of 
\cite[Theorem 1.2]{odaka-xu}. 
Odaka and Xu proved Theorem \ref{x-thm1.1} 
under the extra assumption that $\Delta=B$ 
and $B$ is a $\mathbb{Q}$-divisor. 
By Theorem \ref{x-thm1.3}, we can recover 
Kawakita's inversion of adjunction on log canonicity 
(see Corollary 
\ref{x-cor5.5}). 

\begin{thm}[see {\cite{kawakita}} and Corollary 
\ref{x-cor5.5}]\label{x-thm1.4}
Let $(X, S+B)$ be a normal pair such that 
$S$ is a reduced divisor, $B$ is effective, and 
$S$ and $B$ have no common irreducible components. 
Let $\nu\colon S^\nu\to S$ be the normalization of $S$. 
We put $K_{S^\nu}+B_{S^\nu}=\nu^*(K_X+S+B)$. 
Then $(X, S+B)$ is log canonical 
near $S$ if and only if $(S^\nu, B_{S^\nu})$ is log 
canonical. 
\end{thm}

Kawakita's original proof of Theorem \ref{x-thm1.4} in 
\cite{kawakita} does not use the minimal model program. 
There are some attempts to recover Kawakita's 
inversion of adjunction by using the minimal model program 
under extra assumptions (see \cite{odaka-xu} and \cite{hacon}). 
We note that, in Theorem \ref{x-thm1.4}, the divisor $B$ 
is an effective $\mathbb R$-divisor which may not be 
a boundary $\mathbb R$-divisor. Hence, this is the first time 
to recover Kawakita's inversion of adjunction on log canonicity 
in full generality as an application of the minimal model program. 

For equidimensional reduced and reducible schemes, 
Koll\'ar and Shepherd-Barron constructed minimal semi-resolutions 
of surfaces (see \cite[Proposition 4.10]{ksb}). 
As a higher-dimensional generalization, 
Fujita (see \cite{fujita}) established semi-terminal modifications 
of demi-normal pairs. Here, we note that 
a {\em{demi-normal scheme}} means an 
equidimensional reduced scheme which satisfies Serre's $S_2$ 
condition and is normal crossing in codimension one. 
On the other hand, Odaka and Xu treated semi-log canonical modifications 
of demi-normal pairs in \cite[Corollary 1.2]{odaka-xu}. 
The following theorem is a generalization of \cite[Corollary 1.2]{odaka-xu} 
and Theorem \ref{x-thm1.3} for $\mathbb{Q}$-divisors. 

\begin{thm}[see Theorem \ref{x-thm4.4}]\label{x-thm1.5}
Let $X$ be a demi-normal scheme, and let $\Delta$ 
be an effective $\mathbb{Q}$-divisor on $X$ such that 
${\rm Supp}\Delta$ does not contain any codimension one 
singular loci and $K_{X}+\Delta$ is $\mathbb{Q}$-Cartier. 
We put 
\begin{equation*} 
B=\Delta^{<1}+{\rm Supp}\Delta^{\geq 1}. 
\end{equation*}  
Then $X$ equipped with $B$ has a semi-log canonical 
modification, 
that is, a semi-log canonical pair $(Y,B_{Y})$ and a projective 
birational morphism 
$f\colon Y\to X$ such that 
\begin{itemize}
\item[\em{(i)}]
$f$ is an isomorphism over 
the generic point of any codimension one singular locus,
\item[\em{(ii)}] 
$B_{Y}$ is the sum of the birational 
transform of $B$ on $Y$ and the reduced $f$-exceptional divisor, and 
\item[\em{(iii)}]
$K_{Y}+B_{Y}$ is $f$-ample.
\end{itemize}
\end{thm}

We remark that $K_X+B$ in 
Theorem \ref{x-thm1.5} is not necessarily $\mathbb{Q}$-Cartier. 
As in \cite[Example 3.1]{odaka-xu}, there is an 
example of demi-normal pairs having no semi-log 
canonical modifications. 
In our proof of Theorem \ref{x-thm1.5}, the $\mathbb R$-Cartier 
property of $K_X+\Delta$ is crucial to apply the gluing theory of Koll\'ar. 
For the details, see Remark \ref{x-rem4.5}. 

\subsection{Special crepant models}
By combining the idea of the 
proof of Theorem \ref{x-thm1.1} with 
the minimal model theory for $\mathbb Q$-factorial divisorial 
log terminal pairs, we obtain Theorem \ref{x-thm1.6}, which 
is a generalization of \cite[Lemma 3.10]{fujino}. 
Note that the morphism $g\colon (Y, \Delta_Y)\to (X, \Delta)$ in 
Theorem \ref{x-thm1.6} is a 
kind of dlt blow-up with some extra good properties. 
Here, we call it a {\em{special crepant model}} of $(X, \Delta)$. 
 
\begin{thm}[Special crepant models]\label{x-thm1.6}
Let $X$ be a normal quasi-projective variety 
and let $\Delta$ an effective $\mathbb{R}$-divisor on 
$X$ such that $K_{X}+\Delta$ 
is $\mathbb{R}$-Cartier. 
Then we can construct a crepant model 
$g \colon (Y,\Delta_{Y}) \to (X,\Delta)$, that is, a projective 
birational morphism $g\colon Y\to X$ from 
a normal $\mathbb Q$-factorial variety 
$Y$ and an effective $\mathbb{R}$-divisor 
$\Delta_{Y}$ on $Y$ such that 
\begin{itemize}
\item[\em{(i)}] $K_Y+\Delta_Y=g^*(K_X+\Delta)$, 
\item[\em{(ii)}] the pair $(Y, \Delta'_Y)$ is dlt, where 
$\Delta'_Y=\Delta^{<1}_Y+{\rm Supp} \Delta^{\geq 1}_Y$, such that 
$K_Y+\Delta'_Y$ is $g$-semi-ample, 
\item[\em{(iii)}] every $g$-exceptional prime divisor 
is a component of $(\Delta'_Y)^{=1}$, 
\item[\em{(iv)}] $g^{-1}({\rm Nklt}(X, \Delta))$ coincides with 
${\rm Nklt}(Y, \Delta_Y)$ and ${\rm Nklt}(Y, \Delta'_Y)$ set theoretically, 
\item[\em{(v)}] $g^{-1}({\rm Nlc}(X, \Delta))$ coincides with 
${\rm Nlc}(Y, \Delta_Y)$ and ${\rm Supp} \Delta^{>1}_Y$ set theoretically, and 
\item[\em{(vi)}] there is an 
effective $\mathbb R$-divisor $\Gamma_Y$ on $Y$ such that 
\begin{itemize}
\item[\em{(a)}] ${\rm Supp}\Gamma_Y=g^{-1}({\rm Nklt}(X,\Delta))=
{\rm Supp}\Delta^{\geq 1}_Y$ set theoretically, 
\item[\em{(b)}]  $-\Gamma_Y$ is $g$-semi-ample, and 
\item[\em{(c)}] $\Delta_Y-\Gamma_Y$ is effective 
and $(Y, \Delta_Y-\Gamma_Y)$ is klt.  
\end{itemize}
\end{itemize}
\end{thm}

The main difference between Theorem \ref{x-thm1.6} 
and the usual notion of dlt blow-ups is (ii). 
The $g$-semi-ampleness of $K_Y+\Delta'_Y$ is 
highly nontrivial. We note that $\Delta_Y\ne \Delta'_Y$ 
holds when $(X, \Delta)$ is not log canonical. 
 
We also note that we only need the minimal model program 
essentially obtained in 
\cite{bchm} for the proof of \cite[Lemma 3.10]{fujino}. 
On the other hand, the proof of 
Theorems \ref{x-thm1.1} and \ref{x-thm1.6} is much harder because 
it heavily depends on the minimal model theory 
for log canonical pairs discussed in \cite{has-class}. 
Although Theorem \ref{x-thm1.6} may look artificial, 
it seems to have many applications. 

\subsection{Extremal rational curves}
As an application of Theorem \ref{x-thm1.6}, we prove:

\begin{thm}\label{x-thm1.7}  
Let $X$ be a normal variety and let $\Delta$ be an effective 
$\mathbb R$-divisor on $X$ such that 
$K_X+\Delta$ is $\mathbb R$-Cartier. 
Let $\pi\colon X\to S$ be a projective morphism 
onto a scheme $S$ such that 
$-(K_X+\Delta)$ is $\pi$-ample. 
We assume that 
\begin{equation*} 
\pi\colon {\rm Nklt}(X, \Delta)\to \pi({\rm Nklt}(X, \Delta))
\end{equation*} 
is finite. Let $P$ be a closed point of $S$ 
such that there exists a curve 
$C^\dag \subset \pi^{-1}(P)$ with 
${\rm Nklt}(X, \Delta)\cap C^\dag\ne \emptyset$. 
Then there exists a non-constant morphism 
\begin{equation*} 
f\colon  \mathbb A^1\longrightarrow 
\left(X\setminus {\rm Nklt}(X, \Delta)\right)\cap \pi^{-1}(P)
\end{equation*} 
such that 
the curve $C$, the closure of $f(\mathbb A^1)$ in $X$, 
is a {\em{(}}possibly singular{\em{)}} rational curve 
satisfying  
$C\cap {\rm Nklt}(X, \Delta)\ne \emptyset$ with 
\begin{equation*} 
0<-(K_X+\Delta)\cdot C\leq 1. 
\end{equation*} 
\end{thm}

Theorem \ref{x-thm1.7} is a kind of generalization of 
\cite[Theorem 1.8]{fujino}. 
We note that a log canonical pair, any union of some log canonical 
centers of a log canonical pair, and a quasi-projective 
semi-log canonical pair have natural quasi-log scheme 
structures. Therefore, the theory of quasi-log schemes 
can be seen as a framework to treat all the above 
objects on an equal footing. For the details of the 
theory of quasi-log schemes, see \cite[Chapter 6]{fujino-foundations} 
and \cite{fujino}. 
We will quickly explain the basic definitions in Section \ref{x-sec7}. 
By combining Theorem \ref{x-thm1.7} with 
the framework of quasi-log schemes 
discussed in \cite{fujino}, we have: 

\begin{thm}\label{x-thm1.8}
Let $[X, \omega]$ be a quasi-log scheme 
and let $\pi\colon  X\to S$ be a projective 
morphism between schemes such that 
$-\omega$ is $\pi$-ample 
and that 
\begin{equation*} 
\pi\colon {\rm Nqklt} (X, \omega)\to \pi({\rm Nqklt}(X, \omega))
\end{equation*} 
is finite. 
Let $P$ be a closed point of $S$ such that 
there exists a curve $C^\dag\subset \pi^{-1}(P)$ with 
${\rm Nqklt}(X, \omega)\cap C^\dag\ne \emptyset$. 
Then there exists a non-constant morphism 
\begin{equation*} 
f\colon \mathbb A^1\longrightarrow \left(X\setminus {\rm Nqklt}(X, \omega)\right)
\cap \pi^{-1}(P) 
\end{equation*} 
such that $C$, the closure of $f(\mathbb A^1)$ in $X$, satisfies 
$C\cap {\rm Nqklt}(X, \omega)\ne \emptyset$ with 
\begin{equation*} 
0<-\omega\cdot C\leq 1. 
\end{equation*} 
\end{thm}

Theorem \ref{x-thm1.8} completely solves the first author's 
conjecture (see \cite[Conjecture 1.15]{fujino}). 
As an easy direct consequence of Theorem \ref{x-thm1.8}, 
we establish: 

\begin{thm}[Lengths of extremal rational curves for quasi-log schemes]
\label{x-thm1.9}
Let $[X, \omega]$ be a quasi-log scheme and let $\pi\colon X\to S$ 
be a projective 
morphism between schemes. 
Let $R_j$ be an $\omega$-negative extremal ray of $\overline {NE}(X/S)$ 
that are rational and relatively ample at infinity and 
let $\varphi_{R_j}$ be the contraction morphism associated to $R_j$. 
Let $U_j$ be any open qlc 
stratum of $[X, \omega]$ such that 
$\varphi_{R_j}\colon \overline {U_j}\to \varphi_{R_j}(\overline {U_j})$ 
is not finite and that 
$\varphi_{R_j}\colon W^\dag\to \varphi_{R_j}
(W^\dag)$ is finite for every qlc center 
$W^\dag$ of $[X, \omega]$ with $W^\dag 
\subsetneq \overline {U_j}$, where 
$\overline {U_j}$ is the closure of $U_j$ in $X$. 
Let $P$ be a closed point of $\varphi_{R_j}(U_j)$. 
If there exists a curve $C^\dag$ such that 
$\varphi_{R_j}(C^\dag)=P$, 
$C^\dag\not \subset U_j$,  
and $C^\dag\subset \overline {U_j}$, 
then there exists a non-constant 
morphism 
\begin{equation*} 
f_j\colon \mathbb A^1\longrightarrow U_j\cap \varphi^{-1}_{R_j}(P)
\end{equation*} 
such that 
$C_j$, the closure of $f_j(\mathbb A^1)$ in $X$, 
spans $R_j$ in $N_1(X/S)$ and satisfies 
$C_j\not\subset U_j$ with  
\begin{equation*} 
0<-\omega\cdot C_j\leq 1.
\end{equation*} 
\end{thm}

Note that Theorem \ref{x-thm1.9} supplements \cite[Theorem 1.6 (iii)]{fujino}. 
We also note that 
the above results generalize \cite[Theorem 3.1]{lz} completely. 
The following example may help the reader understand 
Theorem \ref{x-thm1.9}. 

\begin{ex}\label{x-ex1.10}
This example shows that the condition $C^\dag\not \subset U_j$ 
is necessary for the estimate of 
the length of $C_{j}$ in Theorem \ref{x-thm1.9}. 
Let $H_1, \ldots, H_n$ be general hyperplanes on $X=\mathbb P^n$. 
We put $\Delta=\sum _{i=1}^n H_i$ and $\Delta'=\sum _{i=1}^{n-1}H_i$. 
Let us consider the structure morphism $\pi\colon X
\to S={\rm Spec} (\mathbb C)$. 
We note that $(X, \Delta)$ and 
$(X, \Delta')$ are log canonical and 
that $-(K_X+\Delta)$ and $-(K_X+\Delta')$ are $\pi$-ample. 
Since the Picard number of $X$ is one, 
$\pi\colon X\to S$ is an extremal contraction. 
Let $C$ be any one-dimensional lc center of $(X, \Delta)$. 
Then it is easy to see that $C\simeq \mathbb P^1$, 
$-(K_X+\Delta)\cdot C=1$, and 
the open lc center
associated to $C$ is isomorphic to $\mathbb A^1$. 
On the other hand, there are no zero-dimensional 
lc centers of $(X, \Delta')$ and 
$-(K_X+\Delta')\cdot C'\geq 2$ holds for every curve $C'$ on $X$. 
\end{ex}

We summarize the contents of this paper. 
In Section \ref{x-sec2}, we collect some basic definitions and 
results for the reader's convenience. 
Section \ref{x-sec3} is the main part of this paper. 
We prove Theorems \ref{x-thm1.1}, \ref{x-thm1.3}, and 
\ref{x-thm1.6} by using the minimal model theory for log 
canonical pairs. The main ingredient of this section 
is the second author's 
theorem:~Theorem \ref{x-thm3.1}, which was obtained in 
\cite{has-class}. 
In Section \ref{x-sec4}, we discuss 
semi-log canonical modifications for demi-normal pairs. 
We prove Theorem \ref{x-thm1.5} by using Theorem \ref{x-thm1.3} and 
Koll\'ar's gluing theory in \cite{kollar-mmp}. 
The readers who are interested only 
in normal pairs can skip this section. 
In Section \ref{x-sec5}, we treat inversion of adjunction on 
log canonicity. We first prove a slight generalization of 
Hacon's inversion of adjunction on log canonicity for 
log canonical centers. Then we recover 
Kawakita's inversion of adjunction in full 
generality (see Theorem \ref{x-thm1.4}) as 
a special case. 
Section \ref{x-sec6} is devoted to the proof of 
Theorem \ref{x-thm1.7}, which heavily depends on 
the minimal model program for normal pairs. 
In Section \ref{x-sec7}, 
we quickly review some basic definitions in the theory 
of quasi-log schemes. In Section \ref{x-sec8}, 
we prove Theorems \ref{x-thm1.8} and 
\ref{x-thm1.9} by using Theorem \ref{x-thm1.7} and 
the framework of quasi-log schemes. 
We note that we need quasi-log schemes only in Sections \ref{x-sec7} and 
\ref{x-sec8}.

\begin{ack}
The authors thank Christopher Hacon very much 
for answering their question. 
They also thank the referee for many useful comments and 
suggestions. 
\end{ack}

We will work over $\mathbb C$, the 
complex number field, throughout this paper. 
In this paper, a {\em{scheme}} means a 
separated scheme of finite type over $\mathbb C$. 
A {\em{variety}} means an integral scheme, that is, an 
irreducible and reduced separated scheme of finite type over 
$\mathbb C$.

\section{Preliminaries}\label{x-sec2}

In this paper, we use the theory of minimal models 
for higher-dimensional log canonical pairs. 
Here we collect some definitions and results for the 
reader's convenience. 
For the details, see \cite{fujino-funda}, \cite{fujino-foundations}, 
\cite{kollar-mmp}, and \cite{kollar-mori}.  

\begin{defn}[Singularities of pairs]\label{x-def2.1}
Let $X$ be a variety and let $E$ be a prime divisor on $Y$ 
for some birational
morphism $f\colon Y\to X$ from a normal variety $Y$. 
Then $E$ is called a divisor {\em{over}} $X$. 
A {\em{normal pair}} $(X, \Delta)$ consists of 
a normal variety $X$ and an $\mathbb R$-divisor on $X$ 
such that $K_X+\Delta$ is $\mathbb R$-Cartier. 
Let $(X, \Delta)$ be a normal pair and let 
$f\colon Y\to X$ be a projective 
birational morphism from a normal variety $Y$. 
Then we can write 
\begin{equation*} 
K_Y=f^*(K_X+\Delta)+\sum _E a(E, X, \Delta)E
\end{equation*} 
with 
\begin{equation*} 
f_*\left(\underset{E}\sum a(E, X, \Delta)E\right)=-\Delta, 
\end{equation*} 
where $E$ runs over prime divisors on $Y$. 
We call $a(E, X, \Delta)$ the {\em{discrepancy}} of $E$ with 
respect to $(X, \Delta)$. 
Note that we can define the discrepancy $a(E, X, \Delta)$ for 
any prime divisor $E$ over $X$ by taking a suitable 
resolution of singularities of $X$. 
If $a(E, X, \Delta)\geq -1$ (resp.~$>-1$) for 
every prime divisor $E$ over $X$, 
then $(X, \Delta)$ is called {\em{sub log canonical}} (resp.~{\em{sub 
kawamata log terminal}}). 
We further assume that $\Delta$ is effective. 
Then $(X, \Delta)$ is 
called {\em{log canonical}} 
({\em{lc}}, for short) 
and {\em{kawamata log terminal}} 
({\em{klt}}, for short) 
if it is sub log canonical and sub kawamata log terminal, respectively. 

Let $(X, \Delta)$ be a log canonical pair. If there 
exists a projective birational morphism 
$f\colon Y\to X$ from a smooth variety $Y$ such that 
both ${\rm Exc}(f)$, 
the exceptional locus of $f$,  
and  ${\rm Exc}(f)\cup {\rm Supp} f^{-1}_*\Delta$ are simple 
normal crossing divisors on $Y$ and that 
$a(E, X, \Delta)>-1$ holds for every $f$-exceptional divisor $E$ on $Y$, 
then $(X, \Delta)$ is called {\em{divisorial log terminal}} ({\em{dlt}}, for short). 
\end{defn}

\begin{defn}[Non-klt loci, non-lc loci, and lc centers]\label{x-def2.2} 
Let $(X,\Delta)$ be a normal pair. 
If there exist a projective birational morphism 
$f\colon Y\to X$ from a normal variety $Y$ and a prime divisor $E$ on $Y$ 
such that $(X, \Delta)$ is 
sub log canonical in a neighborhood of the 
generic point of $f(E)$ and that 
$a(E, X, \Delta)=-1$, then $f(E)$ is called a 
{\em{log canonical center}} (an {\em{lc center}}, for short) 
of 
$(X, \Delta)$. 

From now on, we further assume that 
$\Delta$ is effective. 
Then the {\em{non-klt locus}} of $(X,\Delta)$, 
denoted by ${\rm Nklt}(X, \Delta)$, is the smallest 
closed subset $Z$ of $X$ whose complement 
$(X\setminus Z, \Delta|_{X\setminus Z})$ is a klt pair. 
Similarly, the {\em non-lc locus} of $(X,\Delta)$, 
denoted by ${\rm Nlc}(X, \Delta)$, is the smallest 
closed subset $Z$ of $X$ such that the 
complement $(X\setminus Z, \Delta|_{X\setminus Z})$ is log canonical. 
\end{defn}

\begin{defn}\label{x-def2.3}
Let $X$ be an equidimensional reduced scheme and 
let $D=\sum _i d_i D_i$ be an $\mathbb R$-divisor 
on $X$ such that $d_i$ is a real number and 
$D_i$ is an 
irreducible reduced closed subscheme of $X$ of pure codimension one 
for every $i$ with $D_i\ne D_j$. 
We put 
\begin{alignat*}{2}D^{<1}&=\sum _{d_i<1}d_i D_i,&  
&D^{\leq 1}= \sum _{d_i\leq 1} d_iD_i, \qquad 
D^{= 1}= \sum _{d_i= 1} D_i, 
\\D^{\geq 1}&= \sum _{d_i\geq 1} d_iD_i,&\qquad &{\text{and}} 
\qquad D^{> 1}= \sum _{d_i> 1} d_iD_i. \end{alignat*}
We also put 
\begin{equation*} 
\lfloor D\rfloor =\sum _i \lfloor d_i 
\rfloor D_i, \quad 
\lceil D\rceil =-\lfloor -D\rfloor, 
\quad {\text{and}} \quad \{D\}=D-\lfloor D\rfloor,  
\end{equation*} 
where $\lfloor d_i \rfloor$ is the integer defined by $d_i-1<\lfloor 
d_i \rfloor \leq d_i$. 
We say that $D$ is a {\em{boundary divisor}} 
if $D$ is effective and $D=D^{\leq 1}$. We say 
that $D$ is a {\em{reduced divisor}} if $D=D^{=1}$.  
\end{defn}

\begin{nota}\label{x-nota2.4}
Let $f\colon X\dashrightarrow X'$ be a birational 
map of normal 
varieties and let $D$ be an $\mathbb{R}$-divisor on $X$. 
If there is no risk of confusion, $D_{X'}$ denotes the sum of $f_{*}D$ 
and the reduced $f^{-1}$-exceptional divisor $E$ on $X'$, that is, 
$E=\sum_j E_{j}$ where $E_{j}$ are the 
$f^{-1}$-exceptional prime divisors on $X'$. 
\end{nota}

\begin{defn}\label{x-def2.5}
Let $p\colon V\to W$ be a projective surjective morphism 
from a normal variety $V$ to a variety $W$ and 
let $D_1$ and $D_2$ be $\mathbb R$-Cartier divisors on $V$. 
Then $D_1\sim _{\mathbb R, W}D_2$ means 
that there exists an $\mathbb R$-Cartier divisor 
$D$ on $W$ such that $D_1-D_2\sim _{\mathbb R} p^*D$. 
We say that $D_1$ is $\mathbb R$-linearly equivalent 
to $D_2$ over $W$ when $D_1\sim _{\mathbb R, W}D_2$. 
\end{defn}

In this paper, we adopt the following definition of 
{\em{models}}. 

\begin{defn}[Models]\label{x-def2.6}
Let $(X,\Delta)$ be a log canonical pair and $X\to Z$ 
a projective morphism to a variety $Z$. 
Let $X'\to Z$ be a projective morphism from a 
normal variety and let $\phi\colon X\dashrightarrow X'$ 
be a birational map over $Z$. 
Let $E$ be the reduced $\phi^{-1}$-exceptional 
divisor on $X'$, that is, $E=\sum_j E_{j}$ where $E_{j}$ are the 
$\phi^{-1}$-exceptional prime divisors on $X'$. 
Put $\Delta'=\phi_{*}\Delta+E$. 
If $K_{X'}+\Delta'$ is $\mathbb{R}$-Cartier, then 
the pair $(X', \Delta')$ is called 
a {\em{log birational model}} of $(X,\Delta)$ over $Z$. 
A log birational model $(X', \Delta')$ of $(X,\Delta)$ over $Z$ is called 
a {\em{good minimal model}} if 
\begin{itemize}
\item[(i)]
$X'$ is $\mathbb{Q}$-factorial, 
\item[(ii)]
$K_{X'}+\Delta'$ is semi-ample over $Z$, and 
\item[(iii)]
for any prime divisor $D$ on $X$ which is exceptional over $X'$, we have
\begin{equation*} 
a(D, X, \Delta) < a(D, X', \Delta'). 
\end{equation*} 
\end{itemize}
A log birational model $(X',\Delta')$ of 
$(X, \Delta)$ over $Z$ is called a {\em{Mori fiber space}} 
if $X'$ is $\mathbb Q$-factorial and there is a 
contraction $X' \to W$ over $Z$ with $\dim W<\dim X'$ such that 
\begin{itemize}
\item[(iv)]
the relative Picard number $\rho(X'/W)$ is 
one and $-(K_{X'}+\Delta')$ is ample over $W$, and 
\item[(v)]
for any prime divisor $D$ over $X$, we have
\begin{equation*} 
a(D,X,\Delta)\leq a(D,X',\Delta')
\end{equation*} 
and strict inequality holds if $D$ is a divisor on $X$ and 
is exceptional over $X'$.
\end{itemize}
\end{defn} 

We make two important remarks on the 
minimal model program for log canonical pairs.

\begin{rem}\label{x-rem2.7}
Let $(X,\Delta)$ be a $\mathbb Q$-factorial dlt pair and 
$\pi \colon X \to Z$ a projective morphism 
from a normal quasi-projective variety $X$ to a quasi-projective variety $Z$. 
If $(X,\Delta)$ has a good minimal model 
or a Mori fiber space over $Z$ as in Definition \ref{x-def2.6}, 
then all $(K_{X}+\Delta)$-minimal model 
programs over $Z$ with scaling of an ample divisor terminate (see 
\cite[Theorem 4.1]{birkar-flip}). 
\end{rem}

\begin{rem}\label{x-rem2.8}
Let $\pi \colon X \to Z$ be a projective 
morphism from a normal quasi-projective 
variety $X$ to a quasi-projective variety $Z$. 
Let $(X,\Delta)$ and $(X,\Delta')$ be two 
$\mathbb Q$-factorial dlt pairs such that 
$K_{X}+\Delta'\sim_{\mathbb R, Z}t(K_X+\Delta)$ 
for a positive real number $t$. 
Suppose that $(X,\Delta)$ has a good minimal model over $Z$. 
By Remark \ref{x-rem2.7}, there exists 
a $(K_X+\Delta)$-minimal model program over 
$Z$ with scaling of an ample divisor that terminates after finitely 
many steps. 
Because any $(K_X+\Delta)$-minimal 
model program over $Z$ with scaling 
of an ample divisor is also a $(K_X+\Delta')$-minimal 
model program over $Z$ with scaling of an ample divisor, 
we see that there is a $(K_X+\Delta')$-minimal model 
program over $Z$ terminating with a good minimal model. 
Thus, we see that $(X,\Delta')$ has a good minimal model over $Z$. 
\end{rem}

\begin{defn}[Log canonical modifications]\label{x-def2.9}
Let $X$ be a normal variety and let $B$ be 
a boundary $\mathbb R$-divisor on $X$. 
Then, a {\em log canonical modification of $X$ 
and $B$} is a log canonical pair $(Y, B_Y)$ and a 
projective birational morphism 
$f\colon Y\to X$ such that 
\begin{itemize}
\item[(i)]
the divisor $B_{Y}$ is the sum of $f^{-1}_{*}B$ 
and the reduced $f$-exceptional divisor $E$, 
that is, $E=\sum _j E_j$ where $E_j$ are 
the $f$-exceptional prime divisors on $Y$, and
\item[(ii)]
the divisor $K_Y+B_Y$ is $f$-ample. 
\end{itemize}
In this paper, if there is no risk of confusion, then 
the notation $f\colon (Y, B_Y)\to (X, B)$ 
denotes the structure of a log canonical 
modification when there is a log canonical modification of $X$ and $B$. 
\end{defn}

In this paper, we will freely 
use the existence of {\em dlt blow-ups}, which was obtained in \cite{fujino}. 
Note that a dlt blow-up is sometimes called a {\em{dlt modification}} in 
the literature. 

\begin{thm}[{Dlt blow-ups, see \cite[Theorem 3.9]{fujino}}]\label{x-thm2.10}
Let $X$ be a normal quasi-projective variety and let $\Delta=\sum _i 
d_i \Delta_i$ be an effective $\mathbb R$-divisor on $X$ such 
that $K_X+\Delta$ is $\mathbb R$-Cartier. 
In this case, we can construct a projective birational morphism 
$f\colon Y\to X$ from a normal quasi-projective variety $Y$ with the 
following properties. 
\begin{itemize}
\item[\em{(i)}] $Y$ is $\mathbb Q$-factorial. 
\item[\em{(ii)}] $a(E, X, \Delta)\leq -1$ for every $f$-exceptional 
divisor $E$ on $Y$. 
\item[\em{(iii)}] We put 
\begin{equation*}
\Delta^\dag:=\sum _{0<d_i< 1}d_i f^{-1}_*\Delta_i+\sum _{d_i\geq 1} 
f^{-1}_*\Delta_i+\sum _{\text{$E$:~$f$-exceptional}} E.  
\end{equation*}
Then $(Y, \Delta^\dag)$ is dlt and the following 
equality 
\begin{equation*} 
K_Y+\Delta^\dag=f^*(K_X+\Delta)+\sum _{a(E, X, \Delta)<-1}
(a(E, X, \Delta)+1)E
\end{equation*} 
holds. 
\end{itemize}
\end{thm}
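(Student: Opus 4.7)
The plan is to reduce the construction to an application of the minimal model program of \cite{bchm}, run on a log resolution. First I would pick a log resolution $g_{0}\colon W\to X$ of $(X,\Delta)$ such that both $\Exc(g_{0})$ and $\Exc(g_{0})\cup \Supp (g_{0})_{*}^{-1}\Delta$ are simple normal crossing divisors on $W$, and define a candidate boundary on $W$ by
$$
\Delta_{W}:=\sum_{0<d_{i}<1} d_{i}\,(g_{0})^{-1}_{*}\Delta_{i}+\sum_{d_{i}\geq 1}(g_{0})^{-1}_{*}\Delta_{i}+\sum_{E\text{ exc}} E.
$$
By construction $(W,\Delta_{W})$ is log smooth with boundary coefficients in $[0,1]$, hence dlt, and the discrepancy formula yields
$$
K_{W}+\Delta_{W}=g_{0}^{*}(K_{X}+\Delta)+P-N,
$$
where $P=\sum_{a(E,X,\Delta)>-1}\bigl(1+a(E,X,\Delta)\bigr)E$ is effective and $g_{0}$-exceptional, $N$ is effective and supported in $\{E:a(E,X,\Delta)<-1\}\cup\Supp(g_{0})^{-1}_{*}\Delta^{>1}$, and $\Supp P\cap\Supp N=\emptyset$.

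Second, I would run a $(K_{W}+\Delta_{W})$-MMP over $X$ with scaling of a sufficiently ample divisor. Since $(W,\Delta_{W})$ is dlt but not necessarily klt, one cannot appeal to \cite{bchm} directly; the standard remedy is to perturb. Concretely, I would pick a $g_{0}$-ample $\mathbb R$-divisor $H$ on $W$ and consider a klt perturbation $(W,\Delta_{W}-\epsilon F+\epsilon H)$ for small $\epsilon>0$ and an effective $F$ with $\epsilon F\leq \Delta_{W}$, for which $K+\text{boundary}$ becomes big over $X$. Then \cite{bchm} produces a relative good minimal model; because $K_{W}+\Delta_{W}\sim_{\mathbb R,X}P-N$ with $\Supp P$ consisting of $g_{0}$-exceptional divisors along which the expression is positive, any such MMP over $X$ contracts precisely the components of $P$, while divisors in $\Supp N$ (either non-exceptional or belonging to $\Delta^{=1}$) are preserved.

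Third, I would verify the three conclusions on the output $f\colon Y\to X$. The $\mathbb Q$-factoriality of $Y$ in (i) is preserved throughout divisorial contractions and flips in the MMP. For (ii), since exactly the exceptional $E$ with $a(E,X,\Delta)>-1$ have been contracted, every surviving $f$-exceptional divisor has $a(E,X,\Delta)\leq -1$. For (iii), the strict transform of $\Delta_{W}$ on $Y$ is exactly $\Delta^{\dag}$ by definition of $\Delta_{W}$, and $(Y,\Delta^{\dag})$ inherits the dlt property from the log smooth pair $(W,\Delta_{W})$ because a $(K+\Delta)$-MMP starting from a dlt pair stays dlt; the stated equality
$$
K_{Y}+\Delta^{\dag}=f^{*}(K_{X}+\Delta)+\sum_{a(E,X,\Delta)<-1}\bigl(a(E,X,\Delta)+1\bigr)E
$$
follows by pushing forward $K_{W}+\Delta_{W}=g_{0}^{*}(K_{X}+\Delta)+P-N$ along the MMP: the $P$-contribution is contracted, and the $N$-contribution, restricted to divisors remaining on $Y$, is precisely the stated sum over $a(E,X,\Delta)<-1$.

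The main obstacle is making the perturbation and termination step rigorous: one needs a klt pair on $W$ with big boundary over $X$ and a $(K+\Delta_{W})$-negative scaling MMP whose steps agree with the scaling MMP for the klt perturbation, so that \cite{bchm} ensures termination and the output matches the contraction pattern dictated by $P$ and $N$. A secondary technical point is handling $\mathbb R$-coefficients; this is now standard but requires the $\mathbb R$-version of the relevant MMP results, and a careful choice of ample divisor to scale against so that one reaches the relative minimal model rather than merely a relative weak log canonical model.
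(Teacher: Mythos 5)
This statement is not proved in the paper at all: Theorem \ref{x-thm2.10} is quoted from \cite[Theorem 3.9]{fujino}, and the paper only records (Remark \ref{x-rem2.11}) that the construction there is ``take a suitable resolution and run a minimal model program over $X$.'' Your skeleton (log resolution $g_0\colon W\to X$, truncated boundary $\Delta_W$ plus the reduced exceptional divisor, the decomposition $K_W+\Delta_W=g_0^*(K_X+\Delta)+P-N$ with $P,N\geq 0$ of disjoint supports, then an MMP over $X$) is exactly that standard construction, and the bookkeeping in your first step is correct. However, the two places where the actual work lies are left open. First, termination: $(W,\Delta_W)$ is dlt but not klt, and your proposed remedy of running the MMP for $(W,\Delta_W-\epsilon F+\epsilon H)$ with $H$ relatively ample does not do what you need as stated. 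Adding $\epsilon H$ to the boundary changes the numerical class, so the steps of the perturbed MMP need not be $(K_W+\Delta_W)$-negative, and its output is a minimal model for the perturbed pair, not for $(W,\Delta_W)$; to transfer termination one needs the perturbed boundary to satisfy $K_W+\Delta'\sim_{\mathbb R,X}t(K_W+\Delta_W)$ (this is precisely the mechanism of Remark \ref{x-rem2.8}). That can be arranged for the exceptional coefficient-one components (using an effective exceptional divisor that is anti-ample over $X$), but not in general for the non-exceptional components of $\Supp\Delta^{\geq 1}$, so klt-ness with the required relative proportionality is not available and the termination of this MMP is genuinely more delicate than a direct appeal to \cite{bchm}. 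You flag this yourself as ``the main obstacle,'' but it is the heart of the proof, not a technical footnote; note that in this paper the analogous difficulty for lc modifications is resolved only by invoking Hashizume's Theorem \ref{x-thm3.1}, not by \cite{bchm} alone.

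Second, your claim that ``any such MMP over $X$ contracts precisely the components of $P$, while divisors in $\Supp N$ are preserved'' is asserted rather than proved, and it is both stronger than needed and partly misstated (the exceptional members of $\Supp N$ are the divisors with $a(E,X,\Delta)<-1$, not components ``belonging to $\Delta^{=1}$''; whether they survive is irrelevant to (ii) and (iii)). The correct argument is: once the MMP terminates it ends with a relative minimal model (no Mori fiber space occurs because $K_W+\Delta_W$ is numerically trivial on the generic fiber of the birational morphism), so $P_Y-N_Y$ is nef over $X$; since $f_*(N_Y-P_Y)\geq 0$, the negativity lemma gives $N_Y-P_Y\geq 0$, hence $P_Y=0$ because $P$ and $N$ have no common components. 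This is what yields (ii) and the displayed formula in (iii); without it, ``exactly the exceptional $E$ with $a(E,X,\Delta)>-1$ have been contracted'' has no justification. With termination established and this negativity-lemma step inserted, the rest of your verification of (i)--(iii) (preservation of $\mathbb Q$-factoriality and dlt-ness along the MMP, identification of the strict transform of $\Delta_W$ with $\Delta^\dag$) is fine.
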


Note that $\Delta$ is not necessarily a boundary divisor in 
Theorem \ref{x-thm2.10}. We close this section with an important 
remark on Theorem \ref{x-thm2.10}. 

\begin{rem}\label{x-rem2.11} 
Let us recall how to construct $f\colon Y\to X$ in 
Theorem \ref{x-thm2.10}. 
In the proof of \cite[Theorem 3.9]{fujino}, 
we first take a suitable resolution of singularities of $X$ and 
then run a minimal model program over $X$. 
After finitely many flips and divisorial contractions over $X$, 
we get a desired birational map $f\colon Y\to X$. 
Hence we may further assume that $f\colon 
Y\to X$ is the identity map over some nonempty Zariski open subset 
of $X$ in Theorem \ref{x-thm2.10}. 
More precisely, in Theorem \ref{x-thm2.10}, 
let $U$ be the largest Zariski open subset 
of $X$ such that $(U, \Delta|_U)$ has only $\mathbb Q$-factorial kawamata 
log terminal singularities. Then we can make $f$ the identity map over $U$.
\end{rem}

\section{Proof of Theorems \ref{x-thm1.1}, \ref{x-thm1.3}, 
and \ref{x-thm1.6}}\label{x-sec3}

In this section, we prove Theorems \ref{x-thm1.1}, 
\ref{x-thm1.3}, and \ref{x-thm1.6}. 
One of the main ingredients of this section  
is the second author's following result on the minimal model program 
for log canonical pairs. 

\begin{thm}[{\cite[Corollary 3.6]{has-class}}] \label{x-thm3.1}
Let $\pi\colon X\to Z$ be a projective morphism of 
normal quasi-projective varieties and let $(X, B)$ be a log canonical pair. 
Suppose that there is an effective $\mathbb{R}$-divisor $D$ on $X$ 
such that
\begin{itemize}
\item[\em{(a)}] 
$-(K_{X}+B+D)$ is nef over $Z$, and 
\item[\em{(b)}]  
$(X,B+a D)$ is log canonical for some positive real number $a$.  
\end{itemize}
Then, $(X,B)$ has a good minimal model or a Mori fiber space over $Z$.
\end{thm}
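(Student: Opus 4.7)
The plan is to reduce to the $\mathbb{Q}$-factorial dlt setting and then run a $(K_X+B)$-MMP with scaling driven by the divisor $D$. First I would apply Theorem~\ref{x-thm2.10} to $(X, B)$, producing $g\colon (Y, B_Y)\to (X, B)$ with $Y$ $\mathbb{Q}$-factorial and $(Y, B_Y)$ dlt; because $(X, B)$ is already lc, the exceptional correction in Theorem~\ref{x-thm2.10}(iii) vanishes and we have $K_Y + B_Y = g^*(K_X + B)$. Setting $D_Y := g^*D$, hypothesis (a) transfers verbatim to $-(K_Y + B_Y + D_Y)$ being nef over $Z$, while hypothesis (b) passes because $(Y, B_Y + aD_Y)$ is the pullback of the lc pair $(X, B + aD)$ along a crepant birational morphism and $B_Y + aD_Y$ is manifestly effective. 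Since $g$ is crepant, a good minimal model or Mori fiber space of $(Y, B_Y)$ over $Z$ descends to one of $(X, B)$ over $Z$, so I may assume from the outset that $(X, B)$ itself is $\mathbb{Q}$-factorial dlt.

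Next, I would run a $(K_X + B)$-MMP over $Z$ with scaling of $aD$. The key interpolation
\[
K_X + B + tD \;=\; (1 - t/a)(K_X + B) + (t/a)(K_X + B + aD)
\]
for $t \in [0, a]$ shows that every intermediate pair $(X, B + tD)$ is lc, so the cone theorem and existence of flips for dlt pairs apply at each stage. Hypothesis (a) forces the initial scaling threshold to be at most $1$, and the usual monotonicity of scaling thresholds along the MMP yields a non-increasing sequence. If at some stage this threshold reaches $0$ I would obtain a minimal model on which $K_X + B$ is nef over $Z$; if a fiber-type contraction occurs first, that contraction is the desired Mori fiber space. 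Finally, abundance for lc pairs — combined with the positivity packaged into hypothesis (a), which writes $K_X + B$ as an anti-nef divisor minus the effective divisor $D$ — would upgrade the minimal model to a good one.

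The hard part will be termination of this scaled MMP and the concluding semi-ampleness, neither of which is available for general lc pairs at the level of \cite{bchm} alone. The role of hypothesis (b) is precisely to keep the MMP inside the lc world so that flips exist and discrepancies behave well, and the role of (a) is to prevent the scaling thresholds from decreasing indefinitely without producing a fiber contraction. I expect the actual proof to proceed by induction on $\dim X$ via adjunction to lc centres, coupled with careful bookkeeping of the scaling exactly along the lines developed in \cite{has-class}; the genuine technical heart is therefore not any single clever move but the combination of termination with scaling and relative abundance in the lc category, both of which are the main achievements of \cite{has-class}.
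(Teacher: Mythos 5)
There is a genuine gap here, and it is worth being precise about what it is. First, note that the paper does not prove this statement at all: Theorem~\ref{x-thm3.1} is imported verbatim from \cite[Corollary 3.6]{has-class}, so the only question is whether your sketch stands on its own. Your reduction to the $\mathbb{Q}$-factorial dlt case via Theorem~\ref{x-thm2.10} is fine (the crepant pullback $(Y,B_Y+aD_Y)$ is indeed lc and effective, and the descent of a good minimal model or Mori fiber space along the crepant morphism works, though it deserves a sentence checking the discrepancy condition in Definition~\ref{x-def2.6}). The problem is the engine you propose afterwards. Running a $(K_X+B)$-MMP ``with scaling of $aD$'' requires $K_X+B+tD$ to be nef over $Z$ for some $t$ (which is why one normally scales by an ample divisor); hypothesis (a) gives exactly the opposite sign, namely that $-(K_X+B+D)$ is nef over $Z$, so it does not make $K_X+B+D$ nef and does not bound any scaling threshold by $1$. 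The sentence ``Hypothesis (a) forces the initial scaling threshold to be at most $1$'' is therefore unjustified as stated. Likewise, the concluding appeal to ``abundance for lc pairs'' invokes an open conjecture; relative semi-ampleness on the minimal model cannot be obtained that way in this generality.

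You then acknowledge that termination of the scaled MMP and the semi-ampleness are ``the main achievements of \cite{has-class}'' and defer to them --- but those two points \emph{are} the content of the theorem, so the proposal is essentially circular: everything that is not deferred is either the routine dlt reduction or a mechanism that does not run as described. The actual argument in \cite{has-class} does not proceed by scaling with $D$; it exploits the specific structure in (a) and (b) --- that $K_X+B$ plus an effective divisor with lc perturbation is anti-nef over $Z$ --- to place $(X,B)$ in a special class of lc pairs for which existence of good minimal models or Mori fiber spaces can be established, and that is where the real work lies. As a proof attempt, then, this is an outline of the easy reduction plus a citation of the theorem itself, with the intermediate steps containing a sign error and an appeal to an open conjecture.
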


Before we prove Theorem \ref{x-thm1.1}, we 
prepare an elementary lemma. 
 
\begin{lem}\label{x-lem3.2}
Let $X$ be a normal variety and $B$ a boundary $\mathbb{R}$-divisor on $X$. 
Suppose that there are two log canonical 
modifications $f\colon (Y,B_Y)\to (X, B)$ 
and $f'\colon (Y',B_{Y'})\to (X, B)$ of $X$ and $B$. 
Then the induced birational map 
$\phi:=f'^{-1}\circ f \colon Y\dashrightarrow Y'$ 
is an isomorphism and $\phi_{*}B_Y=B_{Y'}$. 
\end{lem}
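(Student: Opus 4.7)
The plan is to resolve the birational map $\phi\colon Y\dashrightarrow Y'$ on a common model and then apply the negativity lemma in both directions.

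First, I would take a smooth projective variety $W$ with birational morphisms $p\colon W\to Y$ and $q\colon W\to Y'$ such that $q=\phi\circ p$ as rational maps; then $f\circ p=f'\circ q=:\pi$. Set $L:=p^{*}(K_{Y}+B_{Y})$, $L':=q^{*}(K_{Y'}+B_{Y'})$, and $G:=L-L'$. Writing $L=K_{W}+B_{W}^{Y}$ and $L'=K_{W}+B_{W}^{Y'}$, the boundaries $B_{W}^{Y}$ and $B_{W}^{Y'}$ agree on every prime divisor of $W$ that is not $\pi$-exceptional (both coefficients equal the coefficient in $B$ of the image on $X$), so $G$ is supported on the $\pi$-exceptional locus.

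The heart of the argument is to show $G=0$. Since $L$ is $p$-numerically trivial and $L'$ is $\pi$-nef (being the pullback via $q$ of the $f'$-ample divisor $K_{Y'}+B_{Y'}$), every $p$-contracted curve $C$ satisfies $L\cdot C=0$ and $L'\cdot C\ge 0$, so $-G$ is $p$-nef. To apply the negativity lemma I would check that $p_{*}G\ge 0$: the only components of $G$ that are not $p$-exceptional are strict transforms of $f$-exceptional prime divisors on $Y$, and on such a prime divisor $E\subset W$ the coefficient of $G$ equals $-a(E,Y,B_{Y})+a(E,Y',B_{Y'})=1+a(E,Y',B_{Y'})\ge 0$, the inequality being log canonicity of $(Y',B_{Y'})$. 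The negativity lemma then yields $G\ge 0$; swapping the roles of $p$ and $q$ gives $G\le 0$, hence $G=0$.

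With $L=L'$ in hand, I would conclude as follows. For any $p$-contracted curve $C\subset W$ we now have $L'\cdot C=0$; since $q(C)$ lies in an $f'$-fiber and $K_{Y'}+B_{Y'}$ is $f'$-ample, $q(C)$ must be a point. The rigidity lemma then shows that $q$ factors through $p$, so $\phi$ is a morphism $Y\to Y'$, and symmetrically $\phi^{-1}$ is a morphism, so $\phi$ is an isomorphism. The identity $\phi_{*}B_{Y}=B_{Y'}$ follows by matching coefficients on strict transforms of components of $B$ and between $f$- and $f'$-exceptional divisors (all with coefficient one). The step I expect to be delicate is the sign bookkeeping for $p_{*}G$ and $q_{*}G$: one must confirm that the positive contributions to $G$ are concentrated on the non-$p$-exceptional part and the negative ones on the non-$q$-exceptional part, so that the negativity lemma applies cleanly in each direction.
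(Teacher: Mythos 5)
Your proposal is correct and follows essentially the same route as the paper's proof: a common resolution, the difference $G$ of the two pullbacks, the observation that its non-exceptional (over $Y$, resp.\ $Y'$) components have nonnegative coefficients because the exceptional divisors appear in $B_Y$, $B_{Y'}$ with coefficient one while both pairs are log canonical, and then the negativity lemma applied in both directions to get $G=0$, after which relative ampleness forces $\phi$ to be an isomorphism. The only differences are cosmetic: you finish via the rigidity lemma where the paper invokes the uniqueness of the relatively ample model, and your $W$ should just be taken proper (projective) over $Y$ and $Y'$ rather than an absolutely projective variety.
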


\begin{proof}
Let $h\colon W\to Y$ and $h'\colon W\to Y'$ 
be a common resolution of $\phi$. 
We define an $\mathbb{R}$-divisor $E$ on $W$ by 
\begin{equation*} 
E:=h^{*}(K_Y+B_Y)-h'^{*}(K_{Y'}+B_{Y'}). 
\end{equation*} 
Since $\phi$ is a birational map over $X$, 
every component $D$ of $E$ is exceptional over $X$. 
If a component $D$ of $E$ is not $h$-exceptional, 
then $h_{*}D$ is exceptional over $X$.   
Thus we have $a(D,Y,B_{Y})=-{\rm coeff}_{h_{*}D}(B_{Y})=-1$. 
On the other hand, we have $a(D,Y',B_{Y'})\geq -1$ 
because $(Y',B_{Y'})$ is log canonical. 
So, we obtain ${\rm coeff}_{D}(E)\geq 0$. 
Applying the negativity 
lemma (\cite[Lemma 3.6.2 (2)]{bchm}) to $h\colon W\to X$ 
and $E$, we have $E\geq0$. 
We apply the same argument to $-E$, then we obtain $-E \geq0$. 
Therefore, it follows that $E=0$. 
Since $K_{Y}+B_{Y}$ and $K_{Y'}+B_{Y'}$ are both ample over $X$, 
$\phi$ is an isomorphism and $\phi_{*}B_{Y}=B_{Y'}$. 
\end{proof}

\begin{rem}\label{x-rem3.3}
Let $X$ be a smooth projective variety and let $g\colon X\to X$ 
be any automorphism of $X$. 
Then $g\colon X\to X$ is a log canonical modification of $X$ and 
$B=0$ by definition. 
 
In some geometric applications, we implicitly require 
that a log canonical modification $f\colon (Y, B_Y)\to 
(X, B)$ satisfies the extra assumption that 
$f$ is the identity morphism 
over some nonempty Zariski open subset of $X$. 
Under this extra assumption, by Lemma \ref{x-lem3.2}, 
the log canonical modification $f\colon (Y, B_Y)\to (X, B)$ 
of $X$ and $B$ is unique if it exists. 
\end{rem}
Let us prove Theorem \ref{x-thm1.1}. 

\begin{proof}[Proof of Theorem \ref{x-thm1.1}]
In Step \ref{x-step1.1.1}, we will prove Theorem \ref{x-thm1.1} 
under the extra assumption that $X$ is quasi-projective. 
Then, in Step \ref{x-step1.1.2}, we will treat the general case. 
\setcounter{step}{0}
\begin{step}\label{x-step1.1.1} 
In this step, we will prove Theorem \ref{x-thm1.1} 
under the extra assumption that $X$ is quasi-projective. 
Hence, from now on, we assume that $X$ is quasi-projective. 

We take a dlt blow-up $g \colon Z \to X$ with $K_Z+\Delta_Z=g^*(K_X+\Delta)$ 
as in Theorem \ref{x-thm2.10}, that is, 
$g$ is a projective birational morphism such that 
every $g$-exceptional prime divisor $F$ satisfies $a(F,X,\Delta) \leq -1$ and 
that $(Z, \Delta^{<1}_{Z}+
{\rm Supp}\Delta^{\geq 1}_{Z})$ is a $\mathbb Q$-factorial dlt pair. 
Note that we may further assume that $g$ is the identity morphism 
over some nonempty Zariski open subset of $X$ by Remark 
\ref{x-rem2.11}. 

We define an $\mathbb R$-divisor 
$B_{Z}$ on $Z$ to be the sum of $g^{-1}_{*}B$ 
and the reduced $g$-exceptional divisor (Notation \ref{x-nota2.4}). 
Then the relations 
\begin{equation*} 
B_{Z}\geq0 \qquad {\text{and}} 
\qquad \bigl(\Delta^{<1}_{Z}+{\rm Supp}\Delta^{\geq 1}_{Z}\bigr)-B_{Z}\geq0
\end{equation*}  
hold since the coefficients of $B$ 
belong to $[0,1]$ and $\Delta-B$ is effective. 
This implies that the pair $(Z, B_{Z})$ is a $\mathbb Q$-factorial dlt pair. 
We will prove that $(Z,B_{Z})$ 
has a good minimal model over $X$. 
We put 
\begin{equation*} 
D_{Z}=\Delta_{Z}-B_{Z}. 
\end{equation*} 
We have $\Delta_{Z}-\bigl(\Delta^{<1}_{Z}
+{\rm Supp}\Delta^{\geq 1}_{Z}\bigr)\geq0$ by construction, so
\begin{equation*} 
D_{Z}=\Delta_{Z}-B_{Z}\geq \bigl(\Delta^{<1}_{Z}
+{\rm Supp}\Delta^{\geq 1}_{Z}\bigr)-B_{Z}\geq0, 
\end{equation*} 
from which $D_{Z}$ is an effective $\mathbb{R}$-divisor on $Z$. 
Furthermore, recalling ${\rm Supp} B={\rm Supp}\Delta$ 
and that $B_{Z}$ is the sum of $g^{-1}_{*}B$ 
and the reduced $g$-exceptional 
divisor, it follows that ${\rm Supp}\Delta_{Z}={\rm Supp} B_{Z}$. 
Thus, we see that 
\begin{equation*}
\begin{split}
{\rm Supp} D_{Z}\subset {\rm Supp}\Delta_{Z}={\rm Supp} B_{Z}. 
\end{split}
\end{equation*}
We can find a real number $t>0$ 
such that $B_{Z}-tD_{Z}\geq0$. 
Then the pair $(Z,B_{Z}-tD_{Z})$ is dlt 
because $(Z, B_{Z})$ is a dlt pair and $D_{Z}$ is effective. 
Since $K_{Z}+\Delta_{Z}=g^{*}(K_{X}+\Delta)$, we have 
\begin{equation*} 
K_{Z}+B_{Z}=K_{Z}+\Delta_{Z}-D_{Z} \sim_{\mathbb{R},X}-D_{Z}. 
\end{equation*} 
By this relation, we obtain 
\begin{equation*} 
K_{Z}+B_{Z} -tD_{Z}\sim_{\mathbb{R},X}-(1+t)
D_{Z}\sim_{\mathbb{R},X}(1+t)(K_{Z}+B_{Z}). 
\end{equation*} 
By Remark \ref{x-rem2.8}, the existence of a good minimal 
model of $(Z,B_{Z})$ over $X$ follows from the 
existence of a good minimal model of 
$(Z,B_{Z}-tD_{Z})$ over $X$. 
We put 
\begin{equation*} 
\tilde{B}_{Z}=B_{Z}-tD_{Z}. 
\end{equation*} 
Then $K_{Z}+\tilde{B}_{Z}+(1+t)D_{Z}\sim_{\mathbb{R},X}0$ 
and $(Z,\tilde{B}_{Z}+tD_{Z})$ is dlt since $\tilde{B}_{Z}+tD_Z=B_Z$ 
by definition. 
By Theorem \ref{x-thm3.1}, $(Z,\tilde{B}_{Z})$ has a 
good minimal model over $X$. 
Therefore, $(Z,B_{Z})$ 
also has a good minimal model over $X$. 

By running a minimal model program over $X$, 
we get a good minimal model $(Z', B_{Z'})$ of $(Z, B_Z)$ over $X$ 
(see Remark \ref{x-rem2.7}). 
Let $Z' \to Y$ be the contraction over $X$ induced by $K_{Z'}+B_{Z'}$. 
We define $B_{Y}$ to be the birational transform 
of $B_{Z'}$ on $Y$. Then it is easy to check that $(Y,B_{Y})$ 
is a log canonical pair and 
the induced morphism $f\colon Y\to X$ is the 
desired birational morphism. 
By construction, we may assume that $f\colon Y\to X$ is 
the identity morphism over some nonempty Zarsiki open subset 
of $X$. 
\end{step}
\begin{step}\label{x-step1.1.2} 
In this step, we will treat the general case, that is, 
$X$ is not necessarily quasi-projective. 

We take a finite affine open covering $X=\bigcup_{i}U_{i}$. 
By Step \ref{x-step1.1.1}, there exist log canonical modifications 
$f_i\colon (V_i, B_{V_i}) \to (U_i, B|_{U_i})$ of $U_i$ and $B|_{U_i}$ 
such that 
$f_i$ is the identity morphism 
over some nonempty Zariski open subset of $U_i$ for 
all $i$. By Lemma \ref{x-lem3.2} (see also 
Remark \ref{x-rem3.3}), 
$f_i\colon (V_i, B_{V_i}) \to (U_i, B|_{U_i})$ coincides with 
$f_j\colon (V_j, B_{V_j}) \to (U_j, B|_{U_j})$ over $U_i\cap 
U_j$ for every $j\ne i$. 
Therefore, we get a log canonical modification of $X$ and $B$ by gluing 
them. 
\end{step}
We finish the proof of Theorem \ref{x-thm1.1}. 
\end{proof}

\begin{proof}[Proof of Theorem \ref{x-thm1.3}] 
It is a special case of Theorem \ref{x-thm1.1}. 
\end{proof}

The following remark easily follows from the 
definition of log canonical modifications. 
It is very useful for various geometric applications. 

\begin{rem}\label{x-rem3.4}
Let $X$ be a normal variety 
and let $\Delta$ be an effective $\mathbb R$-divisor 
on $X$ such that $K_X+\Delta$ is $\mathbb R$-Cartier. 
We put $B=\Delta^{<1}+{\rm Supp}\Delta^{\geq 1}.$ 
Let $f\colon (Y, B_Y)\to (X, B)$ be a log canonical 
modification of $X$ and $B$. 
We give two important remarks.

We put $U=X\setminus f({\rm Exc}(f))$. 
Then, any point $x$ of $X$ is contained in $U$ if and 
only if $K_{X}+B$ is $\mathbb{R}$-Cartier and $(X,B)$ is log canonical near $x$. 

We define $\Delta_{Y}$ by $K_{Y}+\Delta_{Y}=f^{*}(K_{X}+\Delta)$, 
and we put $\Gamma_{Y}=\Delta_{Y}-B_{Y}$. 
Then, it follows that $\Gamma_{Y}$ is effective, $-\Gamma_{Y}$ is 
ample over $X$, and we have ${\rm Exc}(f)\subset {\rm Supp}\Gamma_{Y}=
{\rm Supp}\Delta^{>1}_{Y}$. 
\end{rem}

By the same argument as in the proof of Theorem \ref{x-thm1.1}, 
we obtain:

\begin{lem}[Good dlt blow-ups]\label{x-lem3.5}
Let $X$ be a normal quasi-projective variety and let $\Delta=\sum _i 
d_i \Delta_i$ be an effective $\mathbb R$-divisor on $X$ such 
that $K_X+\Delta$ is $\mathbb R$-Cartier. 
Then there exists a projective birational morphism 
$f\colon Y\to X$ as in Theorem \ref{x-thm2.10} 
such that $K_Y+\Delta^\dag$ in 
Theorem \ref{x-thm2.10} is $f$-semi-ample. 
\end{lem}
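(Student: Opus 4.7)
The plan is to apply Theorem \ref{x-thm2.10} to obtain an initial dlt blow-up $g\colon Z\to X$, then run a relative $(K_{Z}+\Delta^{\dag}_{Z})$-MMP over $X$, and take its good minimal model as the desired $f\colon Y\to X$. The starting observation is that $(Z,\Delta^{\dag}_{Z})$ is exactly the dlt pair $(Z,B_{Z})$ appearing in Step \ref{x-step1.1.1} of the proof of Theorem \ref{x-thm1.1} for the choice $B=\Delta^{<1}+\Supp\Delta^{\geq 1}$. Indeed, if we define $\Delta_{Z}$ by the crepant pullback $K_{Z}+\Delta_{Z}=g^{*}(K_{X}+\Delta)$, then every $g$-exceptional prime divisor $E$ has coefficient $-a(E,X,\Delta)\geq 1$ in $\Delta_{Z}$ by Theorem \ref{x-thm2.10}(ii), so $\Supp\Delta^{\geq 1}_{Z}$ equals $g^{-1}_{*}(\Supp\Delta^{\geq 1})$ plus the full reduced $g$-exceptional divisor, and a short coefficient check gives $\Delta^{<1}_{Z}+\Supp\Delta^{\geq 1}_{Z}=\Delta^{\dag}_{Z}=B_{Z}$.

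Given this identification, I plan to reuse the argument of Step \ref{x-step1.1.1} verbatim. Putting $D_{Z}=\Delta_{Z}-\Delta^{\dag}_{Z}\geq 0$ and choosing $t>0$ small enough that $(Z,\Delta^{\dag}_{Z}-tD_{Z})$ is still dlt, one has
$$
K_{Z}+(\Delta^{\dag}_{Z}-tD_{Z})+(1+t)D_{Z}=K_{Z}+\Delta_{Z}=g^{*}(K_{X}+\Delta)\sim_{\mathbb{R},X}0,
$$
and with the choice $a=t/(1+t)$ the pair $(Z,(\Delta^{\dag}_{Z}-tD_{Z})+a(1+t)D_{Z})=(Z,\Delta^{\dag}_{Z})$ is dlt. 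Thus Theorem \ref{x-thm3.1} applied with $B=\Delta^{\dag}_{Z}-tD_{Z}$ and $D=(1+t)D_{Z}$ produces a good minimal model of $(Z,\Delta^{\dag}_{Z}-tD_{Z})$ over $X$. Since $K_{Z}+(\Delta^{\dag}_{Z}-tD_{Z})\sim_{\mathbb{R},X}(1+t)(K_{Z}+\Delta^{\dag}_{Z})$, Remark \ref{x-rem2.8} transfers this to $(Z,\Delta^{\dag}_{Z})$, and Remark \ref{x-rem2.7} guarantees that some $(K_{Z}+\Delta^{\dag}_{Z})$-MMP over $X$ with scaling of an ample divisor terminates, yielding $f\colon(Y,\Delta^{\dag}_{Y})\to X$, where $\Delta^{\dag}_{Y}$ denotes the birational transform of $\Delta^{\dag}_{Z}$ and $K_{Y}+\Delta^{\dag}_{Y}$ is semi-ample over $X$.

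It remains to verify that $f\colon Y\to X$ is itself a dlt blow-up as in Theorem \ref{x-thm2.10} and that the birational transform $\Delta^{\dag}_{Y}$ coincides with the divisor the recipe of Theorem \ref{x-thm2.10} would attach to $f$. The MMP preserves $\mathbb{Q}$-factoriality and the dlt property, and every $f$-exceptional prime divisor is the birational transform of a $g$-exceptional prime divisor (because MMP over $X$ contracts only divisors exceptional over $X$), so Theorem \ref{x-thm2.10}(i) and (ii) hold. For (iii), a componentwise comparison suffices: the recipe assigns to $f^{-1}_{*}\Delta_{i}$ the coefficient $d_{i}$ when $0<d_{i}<1$ and the coefficient $1$ when $d_{i}\geq 1$, and assigns $1$ to each $f$-exceptional prime divisor; these are precisely the coefficients in the pushforward $\phi_{*}\Delta^{\dag}_{Z}$ along the MMP map $\phi\colon Z\dashrightarrow Y$. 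I expect this last bookkeeping to be routine; the real work of the argument has been packaged into Theorem \ref{x-thm3.1}, exactly as in the proof of Theorem \ref{x-thm1.1}.
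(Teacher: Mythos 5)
Your proposal is correct and follows essentially the same route as the paper: the paper's proof of Lemma \ref{x-lem3.5} also starts from the dlt blow-up of Theorem \ref{x-thm2.10}, identifies $\Delta^{\dag}$ with $\Delta^{<1}_{Y}+\Supp\Delta^{\geq 1}_{Y}$, and then invokes the Step \ref{x-step1.1.1} argument of Theorem \ref{x-thm1.1} (with $B=\Delta^{<1}+\Supp\Delta^{\geq 1}$) together with Theorem \ref{x-thm3.1} to get a good minimal model over $X$, running the MMP to make $K_{Y}+\Delta^{\dag}$ $f$-semi-ample. Your additional bookkeeping that the resulting model still satisfies (i)--(iii) of Theorem \ref{x-thm2.10} is exactly what the paper leaves implicit, and it is correct.
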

\begin{proof}
We put $K_Y+\Delta_Y=f^*(K_X+\Delta)$. 
Then 
\begin{equation*} 
\Delta^\dag=\Delta^{<1}_Y+{\rm Supp} \Delta^{\geq 1}_Y
\end{equation*} 
holds. Therefore, as in the proof of 
Theorem \ref{x-thm1.1} (put $B=\Delta^{<1}+{\rm Supp}\Delta^{\geq 1}$ 
in the proof of Theorem \ref{x-thm1.1}), 
by Theorem \ref{x-thm3.1}, 
the dlt pair 
\begin{equation*} 
(Y, \Delta^\dag)=
(Y, \Delta^{<1}_Y+{\rm Supp} \Delta^{\geq 1}_Y)
\end{equation*}  
has a good minimal model over $X$. 
Hence, after finitely many flips and divisorial contractions, 
we can make $K_Y+\Delta^\dag$ $f$-semi-ample. 
\end{proof}

\begin{rem}\label{x-rem3.6}
As in Remark \ref{x-rem2.11}, by construction, 
we may further assume that $f$ is the identity morphism 
over some nonempty Zariski open subset 
of $X$ in Theorems \ref{x-thm1.1}, \ref{x-thm1.3}, 
and Lemma \ref{x-lem3.5}. 
\end{rem}

Let us prove Theorem \ref{x-thm1.6}. 

\begin{proof}[Proof of Theorem \ref{x-thm1.6}]
Let 
$f \colon (Z,\Delta_{Z})\to (X,\Delta)$ 
be a good dlt blow-up as in Lemma \ref{x-lem3.5}. This 
means that $f\colon Z\to X$ is a projective birational morphism 
from a normal $\mathbb Q$-factorial variety $Z$ satisfying 
(i)--(iii). If necessary, then we may further assume that 
$f$ is the identity morphism over some 
nonempty Zariski open subset of $X$ (see Remark \ref{x-rem3.6}). 

\setcounter{step}{0}
\begin{step}\label{x-step1.4.1}
In this step, we will run a $(K_Z+\Delta^{<1}_{Z}+(1-\epsilon)
{\rm Supp}\Delta^{\geq 1}_{Z})$-minimal model program over $X$ and 
make $K_Z+\Delta^{<1}_{Z}+(1-\epsilon)
{\rm Supp}\Delta^{\geq 1}_{Z}$ semi-ample over $X$ 
for some small number $\varepsilon$. 

We can always take $\epsilon>0$ such that, 
for any $(K_{Z}+\Delta^{<1}_{Z}+(1-\epsilon)
{\rm Supp}\Delta^{\geq 1}_{Z})$-minimal 
model program over $X$, the divisor 
$K_{Z}+\Delta^{<1}_{Z}+{\rm Supp}\Delta^{\geq 1}_{Z}$ 
is numerically 
trivial over all extremal contractions of the steps of the minimal model 
program. 
This fact follows from the well-known estimate of 
lengths of extremal rational curves 
(see, for example, \cite[Proposition 3.2]{birkar-existII}). 
More explicitly, if $\varepsilon$ is sufficiently small, 
then we can use \cite[Proposition 3.2 (4) and (5)]{birkar-existII} 
to prove that $K_Z+\Delta^{<1}_Z+{\rm Supp}\Delta^{\geq 1}_Z$ 
is numerically trivial on each step of any 
$(K_Z+\Delta^{<1}_Z+(1-\varepsilon) {\rm Supp}\Delta^{\geq 1}_Z)$-minimal 
model program since 
$K_Z+\Delta^{<1}_Z+{\rm Supp}\Delta^{\geq 1}_Z$ is nef over $X$. 
Since $(Z,\Delta^{<1}_{Z}+(1-\epsilon){\rm Supp}\Delta^{\geq 1}_{Z} )$ is klt, 
we can run a 
$(K_{Z}+\Delta^{<1}_{Z}+(1-\epsilon){\rm Supp}\Delta^{\geq 1}_{Z})$-minimal 
model program 
over $X$ and finally obtain a good minimal model 
$(Z',\Delta^{<1}_{Z'}+(1-\epsilon){\rm Supp}\Delta^{\geq 1}_{Z'} )$ 
over $X$ by \cite{bchm}. 
By the choice of $\epsilon$, the divisor 
$K_{Z'}+\Delta^{<1}_{Z'}+{\rm Supp}\Delta^{\geq 1}_{Z'}$ is semi-ample over $X$. 

Therefore, for any $u \in [0,\epsilon]$, the divisor
\begin{equation*} 
K_{Z'}+\Delta^{<1}_{Z'}+(1-u){\rm Supp}\Delta^{\geq 1}_{Z'}
\end{equation*} 
is semi-ample 
over $X$. 
Note that the divisor 
$-(\Delta^{\geq 1}_{Z'}-(1-\epsilon){\rm Supp}\Delta^{\geq 1}_{Z'})$ is 
also semi-ample over $X$ because
\begin{equation*} 
K_{Z'}+\Delta^{<1}_{Z'}+
(1-\epsilon){\rm Supp}\Delta^{\geq 1}_{Z'}
\sim_{\mathbb{R},X}-(\Delta^{\geq 1}_{Z'}-(1-\epsilon){\rm Supp} 
\Delta^{\geq 1}_{Z'})
\end{equation*}  
holds. 
By the above construction of $(Z', \Delta_{Z'})$, the pair
$(Z', \Delta^{< 1}_{Z'}+{\rm Supp} \Delta^{\geq 1}_{Z'})$ 
is lc and 
\begin{equation*} 
{\rm Nklt}\left(Z', \Delta^{< 1}_{Z'}+{\rm Supp} \Delta^{\geq 1}_{Z'}
\right)={\rm Supp} \Delta^{\geq 1}_{Z'}
\end{equation*} 
holds set theoretically. 
\end{step}

\begin{step}\label{x-step1.4.2}
The morphism $Z'\to X$ is denoted by $\alpha'$. Then we take a 
dlt blow-up $\beta \colon Y \to Z'$ of 
$(Z', \Delta^{< 1}_{Z'}+{\rm Supp}\Delta^{\geq 1}_{Z'})$ 
such that 
$a(E, Z', \Delta^{< 1}_{Z'}+{\rm Supp}\Delta^{\geq 1}_{Z'})=-1$ 
holds for every $\beta$-exceptional divisor $E$ on $Y$ 
(see Theorem \ref{x-thm2.10}). 
We set $g= \alpha' \circ \beta$ and define 
$\Delta_{Y}$ by $K_{Y}+\Delta_{Y}=\beta^{*}\alpha'^{*}(K_{X}+\Delta)$. 
By the definition of $\Delta_Y$, $K_Y+\Delta_Y=g^*(K_X+\Delta)$ 
obviously holds. This means that $g\colon (Y, \Delta_Y)\to 
(X, \Delta)$ satisfies (i). 
By the construction of $\alpha'\colon Z'\to X$, 
$a(E, X, \Delta)\leq -1$ holds for every 
$\alpha'$-exceptional divisor $E$ on $Z'$. 
By the construction of $\beta\colon Y\to Z'$, 
$a(E, X, \Delta)=a(E, Z', \Delta_{Z'})\leq a(E, Z', \Delta^{<1}_{Z'}
+{\rm Supp} \Delta^{\geq 1}_{Z'})=-1$ holds 
for every $\beta$-exceptional divisor $E$ on $Y$. 
This means that every $g$-exceptional prime divisor is a 
component of $(\Delta'_Y)^{=1}$. 
Therefore, $g\colon (Y, \Delta_Y)\to (X, \Delta)$ satisfies 
(iii). 
By the construction of the dlt blow-up 
$\beta\colon Y\to Z'$ of $(Z', \Delta^{<1}_{Z'}+{\rm Supp} \Delta^{\geq 1}_{Z'})$, 
\begin{equation*}
K_Y+\Delta'_Y=\beta^*(K_{Z'}+
\Delta^{<1}_{Z'}+{\rm Supp} \Delta^{\geq 1}_{Z'})
\end{equation*} 
holds. Hence, $K_Y+\Delta'_Y$ is semi-ample over $X$. 
Thus, $g\colon (Y, \Delta_Y)\to (X, \Delta)$ satisfies (ii). 

From now on, we will explain how to check (iv). 
We put 
\begin{equation*} 
E_{Z'}=
\Delta^{\geq 1}_{Z'}-(1-\epsilon){\rm Supp}\Delta^{\geq 1}_{Z'}. 
\end{equation*} 
Then  ${\rm Supp} E_{Z'}={\rm Supp}\Delta^{\geq 1}_{Z'}$, 
and $-E_{Z'}$ is semi-ample 
over $Z$ by Step \ref{x-step1.4.1}. 
We can also see that 
\begin{equation*} 
{\rm Supp}\beta^{*}E_{Z'}={\rm Supp}
\Delta^{\geq 1}_{Y}={\rm Nklt}(Y,\Delta_{Y})
\end{equation*} 
holds 
and $-\beta^{*}E_{Z'}$ is semi-ample 
over $X$. 
Now $g^{-1}({\rm Nklt}(X,\Delta))\supset {\rm Supp}\Delta^{\geq 1}_{Y}$ is 
obvious  
and it is easy to check that $g({\rm Supp}\Delta^{\geq 1}_{Y})=
{\rm Nklt}(X,\Delta)$ holds set theoretically. 
If $g^{-1}({\rm Nklt}(X,\Delta))\supsetneq 
{\rm Supp}\Delta^{\geq 1}_{Y}$, then there is a curve 
$C\subset Y$ such that $g(C)\in {\rm Nklt}(X,\Delta)$ 
and $(C\cdot \beta^{*}E_{Z'})>0$ 
since $g({\rm Supp}\Delta^{\geq 1}_Y)={\rm Nklt}(X,\Delta)$ and 
$g$ has connected fibers.
This is a contradiction because $-\beta^{*}E_{Z'}$ is nef 
over $X$. 
Hence we see that $g^{-1}({\rm Nklt}(X,\Delta))
={\rm Supp}\Delta^{\geq 1}_{Y}$ holds. 
This means that $g \colon (Y, \Delta_Y)\to (X, \Delta)$ satisfies 
(iv). 

For (v), we note that $\Delta_Y-\Delta'_Y$ is effective 
and that $-(\Delta_Y-\Delta'_Y)
\sim _{\mathbb R, X}K_Y+\Delta'_Y$ is $g$-semi-ample. 
By the definition of $\Delta'_Y$, ${\rm Supp} (\Delta_Y-\Delta'_Y)={\rm Supp} 
\Delta^{>1}_Y$ holds. 
By the same argument as in the proof of (iv) above, 
we can check that 
\begin{equation*} 
g^{-1}({\rm Nlc}(X, \Delta))={\rm Supp} \Delta^{>1}_Y={\rm Nlc}(Y, \Delta_Y)
\end{equation*} 
holds set theoretically. 
 
Finally, we will construct $\Gamma_{Y}$ as in (vi). 
Since the pair $(Z',\Delta^{<1}_{Z'}+
(1-u){\rm Supp}\Delta^{\geq 1}_{Z'})$ is klt and 
$K_{Z'}+\Delta^{<1}_{Z'}+(1-u){\rm Supp}\Delta^{\geq 1}_{Z'}$
is semi-ample over $X$ for every $u\in(0,\epsilon]$, by 
the construction of $\beta \colon Y\to Z'$, we can find a 
positive real number $u$ such that if we set $\Delta^{u}_{Y}$ by 
\begin{equation*} 
K_{Y}+\Delta^{u}_{Y}=\beta^{*}(K_{Z'}+
\Delta^{<1}_{Z'}+(1-u){\rm Supp}\Delta^{\geq 1}_{Z'}), 
\end{equation*} 
then $\Delta^{u}_{Y}$ is effective, 
$(Y,\Delta^{u}_{Y})$ is klt, and $K_{Y}+\Delta^{u}_{Y}$ 
is semi-ample over $X$. 
Fix such $u>0$ and put
\begin{equation*} 
\Gamma_{Y}:=\Delta_{Y}-\Delta^{u}_{Y}
=\beta^{*}\left(\Delta_{Z'}-(\Delta^{<1}_{Z'}+
(1-u){\rm Supp}\Delta^{\geq 1}_{Z'})\right). 
\end{equation*} 
Note that $\Gamma_{Y}=(K_{Y}+\Delta_{Y})
-(K_{Y}+\Delta^{u}_{Y})\sim_{\mathbb{R}, X}
-(K_{Y}+\Delta^{u}_{Y})$ holds. Hence $-\Gamma_{Y}$ 
is semi-ample over $X$. 
It is clear that $(Y,\Delta_{Y}-\Gamma_{Y})$ 
is klt because $\Delta_{Y}-\Gamma_{Y}=\Delta^{u}_{Y}$. 
Since 
\begin{equation*} 
{\rm Supp}\left(\Delta_{Z'}-(\Delta^{<1}_{Z'}
+(1-u){\rm Supp}\Delta^{\geq 1}_{Z'})\right)=
{\rm Supp}\Delta^{\geq 1}_{Z'}={\rm Supp} E_{Z'}, 
\end{equation*} 
we have ${\rm Supp}\Gamma_{Y}=
{\rm Supp}\beta^{*}E_{Z'}$, thus
\begin{equation*} 
{\rm Supp}\Gamma_{Y}={\rm Supp}
\Delta^{\geq 1}_{Y}=g^{-1}({\rm Nklt}(X,\Delta))
\end{equation*}  
holds. 
In this way, we see that $\Gamma_{Y}$ 
satisfies all the desired conditions in (vi). 
\end{step}
We complete the proof of Theorem \ref{x-thm1.6}. 
\end{proof}

\begin{rem}\label{x-rem3.7} 
By construction (see also 
Remarks \ref{x-rem2.11} and 
\ref{x-rem3.6}), we may further assume that 
$g\colon (Y, \Delta_Y)\to (X, \Delta)$ in 
Theorem \ref{x-thm1.6} is the identity morphism 
over some nonempty Zariski open subset of $X$. 
Hence we can see $g\colon (Y, \Delta_Y)\to (X, \Delta)$ 
as a partial resolution of 
singularities of the pair $(X, \Delta)$. 
More precisely, in Theorem \ref{x-thm1.6}, 
let $U$ be the largest Zariski open subset 
of $X$ such that $(U, \Delta|_U)$ has only $\mathbb Q$-factorial kawamata 
log terminal singularities. Then we can make $g$ the identity map over $U$.
\end{rem}

\section{On semi-log canonical modifications of 
demi-normal pairs}\label{x-sec4}

A {\em demi-normal scheme} $X$ is a 
reduced and equidimensional scheme which satisfies Serre's $S_2$ 
condition and is normal crossing in codimension one. 
For basic definitions and properties of demi-normal pairs and 
semi-log canonical pairs, 
see \cite[Sections 5.1 and 5.2]{kollar-mmp}. 
In this section, we prove the existence 
of semi-log canonical modifications of demi-normal 
pairs (see Theorem \ref{x-thm4.4}). 
Let us start with the following lemma. 

\begin{lem}\label{x-lem4.1}
Let $(X, S+B)$ be a log canonical pair such that 
$B$ is an effective $\mathbb{R}$-divisor 
and $S$ is a prime divisor with the normalization $S^{\nu}$. 
Let $\Gamma$ be an effective $\mathbb{R}$-Cartier divisor on $X$ 
such that ${\rm Supp}\Gamma\subset \lfloor B\rfloor$. 
We define an effective $\mathbb{R}$-divisor 
$B_{S^{\nu}}$ on $S^{\nu}$ by applying adjunction to $(X,S+B)$ and $S^{\nu}$. 
We put $\Gamma_{S^{\nu}}$ as the pullback of $\Gamma$ to $S^{\nu}$. 
If $\Gamma_{S^{\nu}}\neq 0$, then for any 
component $P_{S^{\nu}}$ of $\Gamma_{S^{\nu}}$ 
we have ${\rm coeff}_{P_{S^{\nu}}}(B_{S^{\nu}})=1$. 
In particular, if ${\rm Supp}\Gamma$ intersects 
$S$, then the pair $(S^{\nu}, B_{S^{\nu}}+\Gamma_{S^{\nu}})$ 
is not log canonical.  
\end{lem}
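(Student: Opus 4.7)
The plan is to identify $P_{S^\nu}$ as a divisorial log canonical center of $(S^\nu, B_{S^\nu})$, from which $\coeff_{P_{S^\nu}}(B_{S^\nu})=1$ follows immediately. First, by Kawakita's inversion of adjunction on log canonicity (see Theorem \ref{x-thm1.3}), the pair $(S^\nu, B_{S^\nu})$ is log canonical, so every coefficient of $B_{S^\nu}$ lies in $[0,1]$; hence it suffices to exhibit an lc place of $(S^\nu, B_{S^\nu})$ whose center is $P_{S^\nu}$.

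Set $\bar P=\nu(P_{S^\nu})\subset S$. Since $P_{S^\nu}$ is a component of $\Gamma_{S^\nu}=\nu^*(\Gamma|_S)$, the image $\bar P$ is contained in $T\cap S$ for some prime component $T$ of $\Gamma$. The hypothesis $\Supp\Gamma\subset\lfloor B\rfloor$ forces $\coeff_T(B)=1$, and $T\neq S$ since $S$ is not a component of $B$. Thus $\bar P$ is an irreducible component of the proper intersection $T\cap S$, where both $T$ and $S$ appear with coefficient $1$ in the boundary $S+B$. A standard argument on a log resolution $\pi\colon Y\to X$ of $(X, S+B)$ — namely, blowing up the smooth codimension-two center $\tilde T\cap \tilde S$, where $\tilde T$ and $\tilde S$ denote the strict transforms, to produce an exceptional divisor with log discrepancy $0$ dominating $\bar P$ — shows that $\bar P$ is a log canonical center of $(X, S+B)$.

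The classical correspondence between lc centers of $(X, S+B)$ contained in $S$ and lc centers of $(S^\nu, B_{S^\nu})$, obtained by comparing $\pi^*(K_X+S+B)|_{\tilde S}$ with $\nu_Y^*(K_{S^\nu}+B_{S^\nu})$ on a common log resolution (where $\nu_Y\colon \tilde S\to S^\nu$ is the induced birational morphism), implies that each irreducible component of $\nu^{-1}(\bar P)$ is an lc center of $(S^\nu, B_{S^\nu})$. In particular $P_{S^\nu}$, being a codimension-one component of $\nu^{-1}(\bar P)$, is a divisorial lc center, so $\coeff_{P_{S^\nu}}(B_{S^\nu})=1$.

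The \emph{in particular} statement is immediate: if $\Supp\Gamma\cap S\neq\emptyset$, then $\Gamma_{S^\nu}\neq 0$, and for any component $P_{S^\nu}$ of $\Gamma_{S^\nu}$ we obtain $\coeff_{P_{S^\nu}}(B_{S^\nu}+\Gamma_{S^\nu})=1+\coeff_{P_{S^\nu}}(\Gamma_{S^\nu})>1$, so $(S^\nu, B_{S^\nu}+\Gamma_{S^\nu})$ fails to be log canonical at the generic point of $P_{S^\nu}$. The main obstacle is making the adjunction correspondence for lc centers precise when $\nu$ is nontrivially a normalization, which requires a careful discrepancy comparison on a common log resolution of $(X, S+B+\Gamma)$ and $(S^\nu, B_{S^\nu}+\Gamma_{S^\nu})$; all other steps are routine once this is settled.
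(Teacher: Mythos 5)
Your overall strategy---realize $P_{S^{\nu}}$ as a divisorial lc place of $(S^{\nu},B_{S^{\nu}})$---is plausible, but the two steps carrying all the weight are not actually proved. First, your argument that $\bar P$ is an lc center of $(X,S+B)$ is invalid as stated: on a log resolution the strict transforms $\tilde S$ and $\tilde T$ need not meet over the generic point of $\bar P$, so there is no smooth center $\tilde S\cap\tilde T$ to blow up. For instance, let $X=X_{0}\times \mathbb A^{1}$ with $X_{0}=\{uv=t^{n+1}\}$ an $A_{n}$ surface singularity, and let $S,T$ be the products with $\mathbb A^{1}$ of the two branches $\{u=t=0\}$ and $\{v=t=0\}$; then $(X,S+T)$ is lc and $S\cap T$ is a codimension-two lc center, but on the obvious log resolution $\tilde S$ and $\tilde T$ are disjoint, separated by the exceptional chain. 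The fact you want is true, but it requires localizing to a surface germ and using the classification of lc surface germs, or a dlt model. Second, and more seriously, the ``classical correspondence'' you invoke---that \emph{every} irreducible component of $\nu^{-1}(\bar P)$, in particular the prescribed component $P_{S^{\nu}}$, is an lc center of $(S^{\nu},B_{S^{\nu}})$---is the hard direction of adjunction: comparing $\pi^{*}(K_{X}+S+B)|_{\tilde S}$ with $\nu_{Y}^{*}(K_{S^{\nu}}+B_{S^{\nu}})$ on a common resolution only shows that lc places of the adjoint pair yield lc centers of $(X,S+B)$, not that an lc center of $(X,S+B)$ inside $S$ produces an lc place of $(S^{\nu},B_{S^{\nu}})$ over each branch. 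That branch-wise statement is essentially the content of the lemma itself, and you explicitly defer it, so the proof is incomplete exactly where it matters. The appeal to Theorem \ref{x-thm1.3} is also problematic inside this paper: it is only proved in Section \ref{x-sec5} (and its proof even refers back to Lemma \ref{x-lem4.1}), and it is unnecessary, since a divisorial lc place lying on $S^{\nu}$ itself gives $\coeff_{P_{S^{\nu}}}(B_{S^{\nu}})=1$ directly.

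For comparison, the paper's proof avoids both issues by taking a dlt blow-up $f\colon (Y,T+B_{Y})\to (X,S+B)$, which is crepant because the pair is lc. Every $f$-exceptional divisor and every component of $\Supp f^{*}\Gamma$ lies in $\lfloor B_{Y}\rfloor$, so a component $D_{T}$ of $T\cap \Supp f^{*}\Gamma$ dominating $\bar P$ is a component of $T\cap \lfloor B_{Y}\rfloor$; dlt adjunction on the normal divisor $T$ gives $\coeff_{D_{T}}(B_{T})=1$, and since $B_{S^{\nu}}$ is the birational transform of $B_{T}$ under the induced morphism $T\to S^{\nu}$, this is exactly $\coeff_{P_{S^{\nu}}}(B_{S^{\nu}})=1$. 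If you want to salvage your approach, either route it through such a dlt model or prove the branch-wise statement by localizing at the generic point of $\bar P$ and using surface theory; as written, the key transfer to $S^{\nu}$ is assumed rather than proved.
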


\begin{proof}
Note that $\Gamma_{S^{\nu}}$ is well-defined 
as an effective $\mathbb{R}$-Cartier divisor on $S^{\nu}$ because $S$ is not a 
component of $\lfloor B\rfloor$ and 
${\rm Supp}\Gamma \subset \lfloor B\rfloor$. 
Since the problem is local, by shrinking $X$, 
we may assume that $X$ is quasi-projective. 

Let $P_{S}$ be the image of $P_{S^{\nu}}$ on $X$. 
Then $P_{S}$ is a subvariety of $X$ of codimension two and 
$P_{S}\subset S\cap {\rm Supp}\Gamma$. 
We take a dlt blow-up $f\colon (Y,T+B_{Y})\to (X,S+B)$ (see 
Theorem \ref{x-thm2.10}), where $T=f^{-1}_{*}S$ and $B_{Y}$ is 
the sum of $f^{-1}_{*}B$ and the reduced $f$-exceptional divisor. 
We put $\Gamma_{Y}=f^{*}\Gamma$. 
Note that $T$ is not a component of $\Gamma_{Y}$. 

The facts $P_{S}\subset S\cap {\rm Supp}\Gamma$, 
$f(T)=S$, and ${\rm Supp}\Gamma_{Y}=f^{-1}({\rm Supp}\Gamma)$ show the 
inclusion $P_{S}\subset f(T\cap {\rm Supp}
\Gamma_{Y})$. 
Because $P_{S}$ and all irreducible components 
of $T\cap {\rm Supp}\Gamma_{Y}$ have the same 
dimension, we can find an irreducible component $D_{T}$ of $T\cap 
{\rm Supp}\Gamma_{Y}$ such that $f(D_{T})=P_{S}$. 
Furthermore, every component 
of $\Gamma_{Y}$ is a component of $\lfloor B_{Y}\rfloor$. 
It is because ${\rm Supp}\Gamma\subset\lfloor B\rfloor$ 
and all $f$-exceptional prime divisors on $Y$ are components 
of $\lfloor B_{Y}\rfloor$. 
Thus, it follows that $D_{T}$ is an irreducible 
component of $T\cap \lfloor B_{Y}\rfloor$. 
We define $B_T$ to be the $\mathbb R$-divisor 
on $T$ with $K_T+B_T=(K_Y+T+B_Y)|_T$. 
Since $(Y, T+B_Y)$ is a $\mathbb{Q}$-factorial 
dlt pair, we have ${\rm coeff}_{D_{T}}(B_{T})=1$. 
Since $f(D_T)=P_S$ and 
$B_{S^{\nu}}$ is the birational transform 
of $B_{T}$ on $S^{\nu}$, we obtain ${\rm coeff}_{P_{S^{\nu}}}(B_{S^{\nu}})=1$. 

If ${\rm Supp}\Gamma$ intersects 
$S$, then $\Gamma_{S^{\nu}}\neq 0$ and any irreducible component 
$P_{S^{\nu}}$ of $\Gamma_{S^{\nu}}$ satisfies 
\begin{equation*} 
{\rm coeff}_{P_{S^{\nu}}}(B_{S^{\nu}}
+\Gamma_{S^{\nu}})>{\rm coeff}_{P_{S^{\nu}}}(B_{S^{\nu}})=1 
\end{equation*} 
by the above discussion. 
Therefore, the pair $(S^{\nu}, B_{S^{\nu}}
+\Gamma_{S^{\nu}})$ is not log canonical.  
\end{proof}

\begin{lem}\label{x-lem4.2}
Let $X$ be a normal variety and 
let $\Delta$ be an effective $\mathbb{R}$-divisor 
on $X$ such that $K_{X}+\Delta$ is $\mathbb{R}$-Cartier. 
Let $S$ be a component of $\Delta^{=1}$ with 
the normalization $S^{\nu}$. 
We put $B=\Delta^{<1}+{\rm Supp}\Delta^{\geq 1}$, 
and let $f\colon (Y,B_{Y})\to (X,B)$ be a log canonical 
modification of $X$ and $B$. 
We put $T=f^{-1}_{*}S$ with the 
normalization $T^{\nu}$, and 
let $\bar{f}\colon T^{\nu}\to S^{\nu}$ be the 
birational morphism induced by $f$. 
\begin{equation*} 
\xymatrix
{
 T^{\nu}\ar[d]_{\bar{f}}\ar[r]&T\ar[d]\ar@{}[r]|*{\subset}&Y \ar[d]^{f}
\\
S^{\nu}\ar[r]&S\ar@{}[r]|*{\subset} &X
}
\end{equation*} 
Let $\Delta_{S^{\nu}}$ be 
the effective $\mathbb{R}$-divisor 
on $S^{\nu}$ defined 
by applying adjunction to $(X,\Delta)$ and $S^{\nu}$, and let $B_{T^{\nu}}$ 
be the effective $\mathbb{R}$-divisor on $T^{\nu}$ defined 
by applying adjunction to $(Y,B_{Y})$ and $T^{\nu}$. 
We put $B_{S^{\nu}}=
\Delta^{<1}_{S^{\nu}}+{\rm Supp}\Delta^{\geq 1}_{S^{\nu}}$. 

Then, the relation $B_{S^{\nu}}=\bar{f}_{*}B_{T^{\nu}}$ 
holds and 
$\bar{f} \colon (T^{\nu},B_{T^{\nu}})\to (S^{\nu},B_{S^{\nu}})$ is 
a log canonical modification of $S^{\nu}$ and 
$B_{S^{\nu}}$. 
\end{lem}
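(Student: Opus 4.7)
The plan is to verify the three defining properties of a log canonical modification for $\bar{f}\colon (T^{\nu},B_{T^{\nu}})\to (S^{\nu},B_{S^{\nu}})$ and then read off the relation $\bar{f}_{*}B_{T^{\nu}}=B_{S^{\nu}}$ from the coefficient description of $B_{T^{\nu}}$. The log canonicity of $(T^{\nu},B_{T^{\nu}})$ follows at once from divisorial adjunction applied to $(Y,B_{Y})$, noting that $T=f^{-1}_{*}S$ has coefficient one in $B_{Y}$ because $S$ is a component of $\Delta^{=1}$. The $\bar{f}$-ampleness of $K_{T^{\nu}}+B_{T^{\nu}}$ is obtained by restricting the $f$-ample divisor $K_{Y}+B_{Y}$ to $T$ and pulling back to $T^{\nu}$ via the finite normalization $T^{\nu}\to T$; since $S^{\nu}\to S$ is also finite, relative ampleness over $S$ upgrades to relative ampleness over $S^{\nu}$ through the factorization in the lemma's diagram.

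To describe the coefficients of $B_{T^{\nu}}$, I would set $\Delta_{Y}$ by $K_{Y}+\Delta_{Y}=f^{*}(K_{X}+\Delta)$ and $\Gamma_{Y}=\Delta_{Y}-B_{Y}$. By Remark \ref{x-rem3.4}, $\Gamma_{Y}$ is effective with $-\Gamma_{Y}$ being $f$-ample, and $\Supp\Gamma_{Y}=\Supp\Delta^{>1}_{Y}$ contains $\Exc(f)$. Since $T$ has coefficient one in both $\Delta_{Y}$ and $B_{Y}$, it is not a component of $\Gamma_{Y}$; moreover every component of $\Gamma_{Y}$ is either $f$-exceptional or the strict transform of a component of $\Supp\Delta^{\geq 1}$, so has coefficient one in $B_{Y}$, giving $\Supp\Gamma_{Y}\subset \lfloor B_{Y}-T\rfloor$. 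Divisorial adjunction along $T$ applied to $\Delta_{Y}$ and $B_{Y}$, followed by subtraction, yields the crepant pullback identity $K_{T^{\nu}}+\Delta_{T^{\nu}}=\bar{f}^{*}(K_{S^{\nu}}+\Delta_{S^{\nu}})$ and the equality $\Delta_{T^{\nu}}-B_{T^{\nu}}=\Gamma_{Y}|_{T^{\nu}}=:\Gamma_{T^{\nu}}$, an effective $\mathbb{R}$-Cartier divisor with $-\Gamma_{T^{\nu}}$ being $\bar{f}$-ample. Lemma \ref{x-lem4.1} applied to $(Y,T+(B_{Y}-T))$ and $\Gamma_{Y}$ then forces every component of $\Gamma_{T^{\nu}}$ to have coefficient one in $B_{T^{\nu}}$. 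For a non-$\bar{f}$-exceptional prime $P_{T^{\nu}}=\bar{f}^{-1}_{*}P_{S^{\nu}}$, crepant pullback gives $\coeff_{P_{T^{\nu}}}(\Delta_{T^{\nu}})=\coeff_{P_{S^{\nu}}}(\Delta_{S^{\nu}})$; a case split according to whether this common value is $<1$, $=1$, or $>1$, combined with Lemma \ref{x-lem4.1} and the log canonicity of $(T^{\nu},B_{T^{\nu}})$, then forces $\coeff_{P_{T^{\nu}}}(B_{T^{\nu}})=\coeff_{P_{S^{\nu}}}(B_{S^{\nu}})$ in each case.

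The main obstacle is to show that every $\bar{f}$-exceptional prime divisor $E$ on $T^{\nu}$ has coefficient one in $B_{T^{\nu}}$. In view of Lemma \ref{x-lem4.1} it suffices to prove $E\subset\Supp\Gamma_{T^{\nu}}$, and for this I plan to use a negativity-type argument. Suppose for contradiction that $E\not\subset\Supp\Gamma_{T^{\nu}}$. Since $E$ is $\bar{f}$-exceptional and $E\cap\Supp\Gamma_{T^{\nu}}$ is a proper closed subset of $E$, a general fiber of $\bar{f}|_{E}$ has positive dimension and is not contained in $\Supp\Gamma_{T^{\nu}}$, so one can choose an irreducible curve $C$ in such a fiber with $C\not\subset\Supp\Gamma_{T^{\nu}}$. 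Then $\Gamma_{T^{\nu}}\cdot C\geq 0$, while $C$ being contracted by $\bar{f}$ together with $K_{T^{\nu}}+B_{T^{\nu}}=\bar{f}^{*}(K_{S^{\nu}}+\Delta_{S^{\nu}})-\Gamma_{T^{\nu}}$ gives $(K_{T^{\nu}}+B_{T^{\nu}})\cdot C=-\Gamma_{T^{\nu}}\cdot C\leq 0$, contradicting $\bar{f}$-ampleness of $K_{T^{\nu}}+B_{T^{\nu}}$. Combining this with the non-exceptional case concludes that $B_{T^{\nu}}$ equals $\bar{f}^{-1}_{*}B_{S^{\nu}}$ plus the reduced $\bar{f}$-exceptional divisor, from which $\bar{f}_{*}B_{T^{\nu}}=B_{S^{\nu}}$ and the lc modification property both follow.
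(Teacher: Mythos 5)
Your proposal is correct and follows essentially the same route as the paper: both use the decomposition $\Delta_{T^{\nu}}=B_{T^{\nu}}+\Gamma_{T^{\nu}}$ coming from $\Gamma_{Y}=\Delta_{Y}-B_{Y}$, invoke Lemma \ref{x-lem4.1} to force coefficient one on components of $\Gamma_{T^{\nu}}$, and compare coefficients of non-exceptional divisors via the crepant relation between $\Delta_{T^{\nu}}$ and $\Delta_{S^{\nu}}$. The only (harmless) deviation is your treatment of the $\bar{f}$-exceptional divisors: the paper obtains $E\subset \Supp\Gamma_{T^{\nu}}$ directly from $\Exc(f)\subset \Supp\Gamma_{Y}=\Supp\Delta^{>1}_{Y}$ (Remark \ref{x-rem3.4}), whereas you re-derive it by a contracted-curve/ampleness contradiction, which is equally valid.
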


\begin{proof}
As in Step \ref{x-step1.1.2} in the proof of Theorem \ref{x-thm1.1}, 
it is sufficient to take a finite affine open covering $X=\bigcup _i U_i$ and 
prove this lemma on each open subset $U_i$. 
Therefore, 
we may assume that $X$ is quasi-projective. 
It is obvious from construction that the 
pair $(T^{\nu},B_{T^{\nu}})$ is log canonical 
and $K_{T^{\nu}}+B_{T^{\nu}}$ is ample over $S^{\nu}$. 
Thus, it is enough to prove that 
$B_{T^{\nu}}$ is the sum of $\bar{f}^{-1}_{*}B_{S^{\nu}}$ 
and the reduced $\bar{f}$-exceptional divisor. 

We define $\Delta_{Y}$ by $K_{Y}+\Delta_{Y}
=f^{*}(K_{X}+\Delta)$, and we define 
$\Gamma_{Y}$ by $\Gamma_{Y}=\Delta_{Y}-B_{Y}$. 
By Remark \ref{x-rem3.4}, $\Gamma_{Y}$ is effective and 
${\rm Supp}\Gamma_{Y}={\rm Supp}\Delta^{>1}_{Y}\supset {\rm Exc}(f)$. 
Let $\Delta_{T^{\nu}}$ be the 
effective divisor on $T^{\nu}$ defined 
by applying adjunction to $(Y,\Delta_{Y})$ and $T^{\nu}$, and 
let $\Gamma_{T^{\nu}}$ be the pullback of $\Gamma_{Y}$ to $T^{\nu}$. 
Then $\Delta_{T^{\nu}}=B_{T^{\nu}}+
\Gamma_{T^{\nu}}$ and $\bar{f}_{*}\Delta_{T^{\nu}}=\Delta_{S^{\nu}}$. 

We pick a prime divisor $P_{T^{\nu}}$ on $T^{\nu}$. 
If $\Gamma_{T^{\nu}}\neq 0$ and $P_{T^{\nu}}$ 
is a component of $\Gamma_{T^{\nu}}$, then we 
can apply Lemma \ref{x-lem4.1} 
to $(Y, B_{Y})$, $T$ and $\Gamma_{Y}$. 
In this way, we obtain ${\rm coeff}_{P_{T^{\nu}}}(B_{T^{\nu}})=1$ and 
${\rm coeff}_{P_{T^{\nu}}}(\Delta_{T^{\nu}})>1$. 
On the other hand, if $P_{T^{\nu}}$ is not 
a component of $\Gamma_{T^{\nu}}$, then 
we have ${\rm coeff}_{P_{T^{\nu}}}(\Delta_{T^{\nu}})
={\rm coeff}_{P_{T^{\nu}}}(B_{T^{\nu}})\leq 1$.  
From these discussions, we obtain
\begin{equation*} 
B_{T^{\nu}}=\Delta^{\leq1}_{T^{\nu}}+
{\rm Supp}\Delta^{>1}_{T^{\nu}}=
\Delta^{<1}_{T^{\nu}}+{\rm Supp}\Delta^{\geq1}_{T^{\nu}}. 
\end{equation*} 
Since $B_{S^{\nu}}=\Delta^{<1}_{S^{\nu}}+
{\rm Supp}\Delta^{\geq 1}_{S^{\nu}}$ which is the 
hypothesis of Lemma \ref{x-lem4.2}, we obtain
\begin{equation*} 
\bar{f}_{*}B_{T^{\nu}}
=\bar{f}_{*}(\Delta^{<1}_{T^{\nu}}+{\rm Supp}\Delta^{\geq1}_{T^{\nu}})
=B_{S^{\nu}}. 
\end{equation*} 
Furthermore, since ${\rm Supp}\Delta^{>1}_{Y}\supset {\rm Exc}(f)$ by 
Remark \ref{x-rem3.4}, every $\bar{f}$-exceptional 
prime divisor $E$ on $T^{\nu}$ is a 
component of $\Gamma_{T^{\nu}}$, hence ${\rm coeff}_{E}(B_{T^{\nu}})=1$. 
Therefore, we see that $B_{T^{\nu}}$ 
is the sum of $\bar{f}^{-1}_{*}B_{S^{\nu}}$ 
and the reduced $\bar{f}$-exceptional divisor. 
It follows that 
$\bar{f} \colon (T^{\nu},B_{T^{\nu}})\to (S^{\nu},B_{S^{\nu}})$ is 
a log canonical modification of 
$S^{\nu}$ and 
$B_{S^{\nu}}=\Delta^{<1}_{S^{\nu}}+{\rm Supp}\Delta^{\geq 1}_{S^{\nu}}$.
\end{proof}

\begin{lem}\label{x-lem4.3}
Let $X$ be a normal variety and $B$ a 
boundary $\mathbb{R}$-divisor on $X$. 
Suppose that there is a log canonical 
modification $f\colon (Y,B_Y)\to (X, B)$ 
and an involution $\tau$ of $X$, that is, 
$\tau$ is an isomorphism 
of $X$ such that $\tau^{2}$ is the identity morphism, 
such that $\tau_*B=B$. 
Then $\tau$ lifts to an involution $\tau'$ of $Y$ such 
that $\tau'_{*}B_{Y}=B_{Y}$. 
\end{lem}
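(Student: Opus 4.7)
The plan is to reduce the statement to the uniqueness of log canonical modifications given by Lemma~\ref{x-lem3.2}. I would first show that the composition $f':=\tau\circ f\colon Y\to X$ is itself a log canonical modification of $X$ and $B$, with the same distinguished boundary $B_Y$ on $Y$. Because $\tau$ is an isomorphism of $X$, a prime divisor on $Y$ is $f'$-exceptional if and only if it is $f$-exceptional, and $(f')^{-1}_{*}B=f^{-1}_{*}\tau^{-1}_{*}B=f^{-1}_{*}B$ by $\tau_{*}B=B$ and $\tau^{2}=\mathrm{id}_{X}$. Therefore the reduced boundary divisor on $Y$ associated to $f'$ in the sense of Definition~\ref{x-def2.9} coincides with $B_Y$. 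The relative ampleness of $K_Y+B_Y$ over $X$ is preserved under post-composition with the isomorphism $\tau$, so $K_Y+B_Y$ is also $f'$-ample, and $f'\colon (Y,B_Y)\to (X,B)$ is a log canonical modification.

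Applying Lemma~\ref{x-lem3.2} to the two log canonical modifications $f$ and $f'$, the induced birational map
\[
\phi:=(f')^{-1}\circ f=f^{-1}\circ\tau^{-1}\circ f\colon Y\dashrightarrow Y
\]
is in fact an isomorphism with $\phi_{*}B_Y=B_Y$. Setting $\tau':=\phi^{-1}=f^{-1}\circ\tau\circ f$, I obtain an automorphism of $Y$ with $f\circ\tau'=\tau\circ f$ and $\tau'_{*}B_Y=B_Y$.

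The involution property is then a direct computation:
\[
(\tau')^{2}=(f^{-1}\circ\tau\circ f)\circ(f^{-1}\circ\tau\circ f)=f^{-1}\circ\tau^{2}\circ f=\mathrm{id}_{Y},
\]
using $\tau^{2}=\mathrm{id}_{X}$. There is no serious obstacle here; once Lemma~\ref{x-lem3.2} is at hand, the statement is a formal consequence of it. The only delicate point is verifying that the boundary on $Y$ built from $f'=\tau\circ f$ agrees with $B_Y$, but this is immediate from $\tau_{*}B=B$ together with $\tau$ being an isomorphism of $X$.
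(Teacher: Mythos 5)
Your argument is correct and is essentially the paper's own proof: verify that $\tau\circ f$ is again a log canonical modification, invoke Lemma \ref{x-lem3.2} to get the isomorphism $\tau'=f^{-1}\circ\tau\circ f$ with $\tau'_{*}B_Y=B_Y$, and conclude it is an involution. The only cosmetic difference is in the last step: your cancellation $(\tau')^{2}=f^{-1}\circ\tau^{2}\circ f$ holds a priori only as birational maps, so, as the paper notes, one finishes by observing that $(\tau')^{2}$ is a morphism agreeing with the identity on a dense open subset of $Y$ and hence equals the identity.
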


\begin{proof}
The morphism 
$\tau\circ f \colon (Y, B_Y)\to (X, B)$ is also a 
log canonical modification of $X$ and $B$. 
Therefore, the induced birational 
map $f^{-1}\circ \tau \circ f\colon Y \to Y$ is an isomorphism 
by Lemma \ref{x-lem3.2}. 
We put $\tau'= f^{-1}\circ \tau \circ f$. 
By Lemma \ref{x-lem3.2} again, we have 
$\tau'_{*}B_{Y}=B_{Y}$. 
Moreover, the isomorphism $\tau'^{2}\colon Y\to Y$ is the identity on 
an open subset of $Y$, so $\tau'^{2}$ is the identity. 
In this way, $\tau'$ is an involution of 
$Y$ such that $\tau'_{*}B_{Y}=B_{Y}$ and $f\circ \tau'=\tau\circ f$. 
\end{proof}

The following theorem is the main result of this section. 

\begin{thm}\label{x-thm4.4}
Let $X$ be a demi-normal scheme, 
and let $\Delta$ be an effective $\mathbb{Q}$-divisor 
on $X$ such that ${\rm Supp}\Delta$ 
does not contain any codimension one 
singular loci and $K_X+\Delta$ is $\mathbb Q$-Cartier. 
We put 
\begin{equation*} 
B=\Delta^{<1}+{\rm Supp}\Delta^{\geq 1}. 
\end{equation*} 
Then $X$ equipped with $B$ has a semi-log canonical modification, 
that is, a semi-log canonical pair 
$(Y, B_{Y})$ and a projective birational morphism 
$f\colon Y\to X$ such that 
\begin{itemize}
\item[\em{(i)}]
$f$ is an isomorphism over the 
generic point of any codimension one singular locus,
\item[\em{(ii)}] 
$B_Y$ is the sum of the 
birational transform of $B$ on $Y$ and the reduced 
$f$-exceptional divisor, and 
\item[\em{(iii)}]
$K_Y+B_Y$ is $f$-ample.
\end{itemize}
\end{thm}

\begin{proof}
We follow the proof of \cite[Corollary 1.2]{odaka-xu}. 
Let $\nu \colon \bar{X}\to X$ be the normalization. 
We may write $K_{\bar{X}}+\bar{D}+
\bar{\Delta}=\nu^{*}(K_{X}+\Delta)$, 
where $\bar{D}$ is the conductor and 
$\bar{\Delta}$ is an effective $\mathbb{Q}$-divisor. 
We decompose $\bar{X}=\amalg_{i}\bar{X}_{i}$ into 
irreducible components, and we set 
$\bar{D}_{i}=\bar{D}|_{\bar{X}_{i}}$ and 
$\bar{\Delta}_{i}=\bar{\Delta}|_{\bar{X}_{i}}$. 
Then  $K_{\bar{X}_{i}}+\bar{D}_{i}+\bar{\Delta}_{i}$ 
is $\mathbb{Q}$-Cartier. 
We put 
\begin{equation*} 
\bar{B}_{i}=\bar{\Delta}^{<1}_{i}+{\rm Supp}\bar{\Delta}^{\geq 1}_{i}. 
\end{equation*}  
Then there exists a log canonical 
modification $g_{i}\colon (Y_{i},T_{Y_{i}}+B_{Y_{i}})
\to (\bar{X}_{i}, \bar{D}_{i}+\bar{B}_{i})$ of $\bar{X}_{i}$ 
and $\bar{D}_{i}+\bar{B}_{i}$, where $T_{Y_{i}}=g^{-1}_{i*}\bar{D}_{i}$ 
and $B_{Y_{i}}$ is the sum of $g^{-1}_{i*}\bar{B}_{i}$ 
and the reduced $g_{i}$-exceptional divisor. 

Fix an index $i$. 
We pick an irreducible component 
$\bar{D}_{i,j}$ of $\bar{D}_{i}$. 
Let $\bar{D}^{\nu}_{i,j}$ and 
$\bar{D}^{\nu}_{i}$ be the normalizations 
of $\bar{D}_{i,j}$ and $\bar{D}_{i}$, respectively. 
Then $\bar{D}^{\nu}_{i,j}$ is an irreducible component of $\bar{D}^{\nu}_{i}$. 
We put $T_{i,j}=g^{-1}_{i*}\bar{D}_{i,j}$ 
and let $T^{\nu}_{i,j}$ be the normalization of $T_{i,j}$.
\begin{equation*} 
\xymatrix
{
 T^{\nu}_{i,j}\ar[d]\ar[r]&T_{i,j}\ar[d]\ar@{}[r]|*{\subset}&Y_{i} \ar[d]^{g_{i}}
\\
\bar{D}^{\nu}_{i,j}\ar[r]&\bar{D}_{i,j}\ar@{}[r]|*{\subset} &\bar{X}_{i}
}
\end{equation*} 
We define $\Delta_{\bar{D}^{\nu}_{i,j}}$, 
$B_{T^{\nu}_{i,j}}$, and $B_{\bar{D}^{\nu}_{i,j}}$ as follows:
\begin{itemize}
\item[(a)] $\Delta_{\bar{D}^{\nu}_{i,j}}$ is the 
$\mathbb{Q}$-divisor on $\bar{D}^{\nu}_{i,j}$ defined 
by adjunction for $(\bar{X}_{i},\bar{D}_{i}+\bar{\Delta}_{i})$ 
and $\bar{D}^{\nu}_{i,j}$, 
\item[(b)]
 $B_{T^{\nu}_{i,j}}$ is the $\mathbb{Q}$-divisor 
 on $T^{\nu}_{i,j}$ defined 
 by adjunction for $(Y_{i},T_{Y_{i}}+B_{Y_{i}})$ and $T^{\nu}_{i,j}$, and
\item[(c)]  $B_{\bar{D}^{\nu}_{i,j}}$ is a $\mathbb{Q}$-divisor 
on $\bar{D}^{\nu}_{i,j}$ defined by 
$B_{\bar{D}^{\nu}_{i,j}}=\Delta^{<1}_{\bar{D}^{\nu}_{i,j}}+{\rm Supp}
\Delta^{\geq 1}_{\bar{D}^{\nu}_{i,j}}$. 
\end{itemize}
By Lemma \ref{x-lem4.2}, the 
morphism $( T^{\nu}_{i,j},B_{T^{\nu}_{i,j}})\to (\bar{D}^{\nu}_{i,j}, 
B_{\bar{D}^{\nu}_{i,j}})$ is a log canonical modification 
of $\bar{D}^{\nu}_{i,j}$ and $B_{\bar{D}^{\nu}_{i,j}}$. 

We freely use the notations in the previous paragraph. 
Recall that $\bar{D}$ is the conductor. 
Let $\bar{D}^{\nu}$ be the normalization of $\bar{D}$. 
By construction, we have 
\begin{equation*} 
\bar{D}^{\nu}=\amalg_{i,j}\bar{D}^{\nu}_{i,j}. 
\end{equation*} 
The construction of $\Delta_{\bar{D}^{\nu}_{i,j}}$ 
shows that $\sum_{i,j}\Delta_{\bar{D}^{\nu}_{i,j}}$ is 
the effective $\mathbb{Q}$-divisor on $\bar{D}^{\nu}$ 
defined 
by adjunction (\cite[Definition 4.2]{kollar-mmp}) 
for $(\bar{X},\bar{D}+\bar{\Delta})$ and $\bar{D}^{\nu}$. 
Since $\bar{D}$ is the conductor 
of $X$, the normalization $\bar{D}^{\nu}$ has an involution 
$\tau \colon \bar{D}^{\nu}\to\bar{D}^{\nu}$ (see \cite[5.2]{kollar-mmp}). 
Furthermore, \cite[Proposition 5.12]{kollar-mmp} 
shows that the relation 
$\tau_{*}\Bigl(\sum_{i,j}\Delta_{\bar{D}^{\nu}_{i,j}}\Bigr)
=\sum_{i,j}\Delta_{\bar{D}^{\nu}_{i,j}}$ holds. 
Since we have defined 
$B_{\bar{D}^{\nu}_{i,j}}=\Delta^{<1}_{\bar{D}^{\nu}_{i,j}}+
{\rm Supp}\Delta^{\geq 1}_{\bar{D}^{\nu}_{i,j}}$, 
we see that $\sum_{i,j}B_{\bar{D}^{\nu}_{i,j}}$ is an 
effective $\mathbb{Q}$-divisor on 
$\bar{D}^{\nu}=\amalg_{i,j}\bar{D}^{\nu}_{i,j}$ and 
\begin{equation*} 
\tau_{*}\Bigl(\sum_{i,j}B_{\bar{D}^{\nu}_{i,j}}\Bigr)
=\sum_{i,j}B_{\bar{D}^{\nu}_{i,j}}. 
\end{equation*} 
Thus, $\tau$ is an involution of $\bar{D}^{\nu}
=\amalg_{i,j}\bar{D}^{\nu}_{i,j}$ such that 
$\tau_{*}\Bigl(\sum_{i,j}B_{\bar{D}^{\nu}_{i,j}}\Bigr)
=\sum_{i,j}B_{\bar{D}^{\nu}_{i,j}}.$
Since $( T^{\nu}_{i,j},B_{T^{\nu}_{i,j}})\to 
(\bar{D}^{\nu}_{i,j}, B_{\bar{D}^{\nu}_{i,j}})$ is a log 
canonical modification, $\tau$ lifts 
to an involution $\tau'$ of $\amalg_{i,j}T^{\nu}_{i,j}$ 
such that $\tau'_{*}\Bigl(\sum_{i,j}B_{T^{\nu}_{i,j}}\Bigr)=
\sum_{i,j}B_{T^{\nu}_{i,j}}$ by Lemma \ref{x-lem4.3}. 

We have constructed the following objects 
over the normalization $\bar{X}=\amalg_{i}\bar{X}_{i}$ of $X$.
\begin{itemize}
\item[(d)]
a log canonical pair $(Y_{i},T_{Y_{i}}+B_{Y_{i}})$ and 
a projective birational morphism $Y_{i}\to \bar{X}_{i}$ 
such that $K_{Y_{i}}+T_{Y_{i}}+
B_{Y_{i}}$ is ample over $\bar{X}_{i}$, and
\item[(e)]
an involution $\tau'$ of $\amalg_{i,j}T^{\nu}_{i,j}$ 
such that 
$\tau'_{*}\Bigl(\sum_{i,j}B_{T^{\nu}_{i,j}}\Bigr)
=\sum_{i,j}B_{T^{\nu}_{i,j}}$, 
where $\amalg_{i,j}T^{\nu}_{i,j}$ is the 
normalization of $\amalg_{i}T_{Y_{i}}$. 
\end{itemize}
Using the gluing theory by Koll\'ar (\cite[Corollary 5.37, 
Corollary 5.33, and Theorem 5.38]{kollar-mmp}), we get a 
semi-log canonical pair $(Y,B_{Y})$ over $X$ whose 
normalization is $\amalg_{i}(Y_{i},T_{Y_{i}}+B_{Y_{i}})$ 
and the conductor is $\sum_{i}T_{Y_{i}}$. 
More precisely, 
by \cite[Corollary 5.37]{kollar-mmp} we see 
that the set theoretical equivalence relation 
(see \cite[Definition 9.1]{kollar-mmp}) defined with $\tau'$ is finite. 
Then, by \cite[Corollary 5.33]{kollar-mmp} we 
get a demi-normal pair $(Y,B_{Y})$ over $X$ 
whose normalization is $\amalg_{i}(Y_{i},T_{Y_{i}}+B_{Y_{i}})$ 
and the conductor is $\sum_{i}T_{Y_{i}}$. 
Finally, by \cite[Theorem 5.38]{kollar-mmp} we see 
that $(Y,B_{Y})$ is a semi-log canonical pair. 
By the construction of $(Y,B_{Y})$, 
the morphism $(Y,B_{Y})\to X$ satisfies 
all the conditions of Theorem \ref{x-thm4.4}. 
Indeed, the first condition of Theorem \ref{x-thm4.4} follows from that 
the involution $\tau'$ of $\amalg_{i,j}T^{\nu}_{i,j}$ is 
the lift of the involution $\tau$ of  $\bar{D}^{\nu}$, 
the normalization of the conductor of $X$. 
The second condition of 
Theorem \ref{x-thm4.4} follows from 
the definition of $B_{Y_{i}}$ (see the first 
paragraph of this proof), and the third 
condition of Theorem \ref{x-thm4.4} 
is obvious because $K_{Y_{i}}+T_{Y_{i}}+B_{Y_{i}}$ 
is ample over $\bar{X}_{i}$ and $\bar{X}_{i}\to X$ is a finite morphism. 
In this way, we can get a semi-log canonical modification of $X$ and $B$. 
\end{proof}

We close this section with a remark. 

\begin{rem}\label{x-rem4.5}
A key ingredient for applying the gluing 
theory of Koll\'ar is Lemma \ref{x-lem4.2}, 
which says that constructing a log canonical modification is 
compatible with adjunction. 
In Lemma \ref{x-lem4.2}, the $\mathbb{R}$-Cartier 
property of $K_{X}+\Delta$ is 
crucial for the proof. 
Therefore, the hypothesis of 
Theorem \ref{x-thm4.4} that $K_{X}+\Delta$ 
is $\mathbb{Q}$-Cartier is necessary for the proof. 
In general, as shown 
in \cite[Example 3.1]{odaka-xu}, there is a demi-normal 
scheme $X$ having no semi-log canonical modification. 
If we take the normalization $\bar{X}$ and the 
conductor $\bar{D}$ of the demi-normal 
scheme $X$ in \cite[Example 3.1]{odaka-xu}, 
then the divisor $K_{\bar{X}}+\bar{D}$ 
is not $\mathbb{Q}$-Cartier and $K_{\bar{X}}+a \bar{D}$ is 
not $\mathbb{R}$-Cartier for any $a>1$. 
See \cite[Example 3.1]{odaka-xu} for details.  
\end{rem}

\section{On inversion of adjunction on log canonicity}\label{x-sec5}

In this section, we treat inversion of adjunction on log canonicity 
for log canonical centers. 
In order to state the main result of this section (see Theorem \ref{x-thm5.4}), 
we prepare some definitions. 

Let $(X,\Delta)$ be a normal pair such 
that $\Delta$ is effective, and let $V$ be a log canonical center 
with the normalization $V^{\nu}$. 
For any birational morphism $W\to V^{\nu}$ 
from a normal variety $W$, we define an 
$\mathbb R$-divisor $B_{W}$ on $W$ 
as follows: 
Let $f\colon Y\to X$ be a log resolution 
of $(X,\Delta)$ such that there is an 
induced surjective morphism from a 
component $T$ of $\Delta^{=1}_{Y}$ to 
$W$, where $\Delta_{Y}$ is defined by 
$K_{Y}+\Delta_{Y}=f^{*}(K_{X}+\Delta)$. 
Note that such a log resolution always exists since $V$ is 
a log canonical center. 
We put $\Delta_{T}=(\Delta_{Y}-T)|_{T}$. 
Then we obtain a projective surjective morphism 
$f_{T}\colon T\to W$, which is induced by $f\colon Y\to X$, 
such that $K_{T}+\Delta_{T}\sim_{\mathbb{R},W}0$. 
For any prime divisor $P$ on $W$ with the generic point $P_{\eta}$, we define 
$\alpha_{P,T}$ by
\begin{equation*} 
\alpha_{P,T}
=\sup\{\lambda \in \mathbb R\,|\,(T,\Delta_T+\lambda f^*_TP) 
\; \text{is sub log canonical over $P_{\eta}$}\}. 
\end{equation*} 
We note that we may assume that $P$ is a Cartier 
divisor on $W$ by shrinking $W$ suitably in the 
above definition of $\alpha_{P.T}$. 
Then we define an $\mathbb{R}$-divisor $B_{W}$ 
on $W$ by
\begin{equation*} 
B_{W}=\sum_{P}(1-\underset{T}\inf \alpha_{P,T})P 
\end{equation*} 
where $P$ runs over prime divisors on $W$ and 
$T$ 
runs over prime divisors 
over $X$ such that $a(T,X,\Delta)=-1$ 
and the image of $T$ on $X$ is $V$. 

\begin{lem}\label{x-lem5.1}
In the above notation, $B_W$ is a well-defined $\mathbb R$-divisor 
on $W$. Moreover, 
if $W=V^\nu$, then $B_{V^\nu}$ is effective. 
\end{lem}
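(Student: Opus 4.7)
The statement has two parts: well-definedness of $B_W$ as an $\mathbb R$-divisor, and effectiveness when $W = V^\nu$.

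My plan for well-definedness is to reduce the infimum over all qualifying $T$ to a minimum over a finite family. Fix a log resolution $f\colon Y \to X$ of $(X,\Delta)$ admitting an induced morphism $Y \to W$, and let $T_1,\ldots,T_r$ denote the components of $\Delta_Y^{=1}$ with $f(T_i) = V$ that surject onto $W$. Each $\alpha_{P,T_i}$ is a real number: it is bounded above because $f_{T_i}^*P$ is a nonzero effective divisor on $T_i$. Moreover, the set of primes $P$ with $\alpha_{P,T_i}\neq 1$ is finite, being contained in the image under $f_{T_i}$ of $\Supp\Delta_{T_i}\cup\Exc(f_{T_i})$; for $P$ outside this image, $f_{T_i}$ is smooth over $\eta_P$ with reduced fiber disjoint from $\Delta_{T_i}$, forcing $\alpha_{P,T_i}=1$. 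The heart of the argument is to show that for any other qualifying $T$ one has $\alpha_{P,T}\geq\min_i\alpha_{P,T_i}$. I would establish this by passing to a common log resolution $Z \to Y$ on which $T$ appears as a prime divisor; since $a(T,X,\Delta) = -1$, $T$ is a component of $\Delta_Z^{=1}$, and applying the negativity lemma to the relative canonical divisor of $Z\to Y$ transports sub log canonicity of $(T_i,\Delta_{T_i}+\lambda f_{T_i}^*P)$ near $\eta_P$ to that of $(T,\Delta_T+\lambda f_T^*P)$. Granting this, $\inf_T\alpha_{P,T} = \min_i\alpha_{P,T_i}$, yielding a locally finite $\mathbb R$-divisor $B_W$.

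For the effectiveness assertion, assume $W = V^\nu$ and fix a prime divisor $P$ on $V^\nu$; it suffices to produce a single $T$ with $\alpha_{P,T}\leq 1$. By further blowing up the log resolution if necessary, I may assume that for some component $T$ of $\Delta_Y^{=1}$ with $f(T) = V$, the induced morphism $f_T\colon T \to V^\nu$ is smooth over a neighborhood of $\eta_P$ and the fiber $f_T^{-1}(P)$ contains an irreducible component $E$ of multiplicity one that is not contained in the support of $\Delta_Y - T$. Then $\coeff_E(\Delta_T) = 0$ and $\coeff_E(f_T^*P) = 1$, so the coefficient of $E$ in $\Delta_T + \lambda f_T^*P$ is exactly $\lambda$, and the pair fails to be sub log canonical at $\eta_E$ whenever $\lambda>1$. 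Consequently $\alpha_{P,T}\leq 1$ and the coefficient $1 - \inf_T\alpha_{P,T}$ of $P$ in $B_{V^\nu}$ is nonnegative.

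The principal obstacle is the crepant-descent inequality $\alpha_{P,T}\geq\min_i\alpha_{P,T_i}$, which underpins local finiteness of $B_W$. Although morally plausible — any lc place of $(X,\Delta)$ over $V$ is obtained from the $T_i$ by further blowups that preserve the condition $a(\cdot,X,\Delta) = -1$ — its justification demands a careful use of the negativity lemma on a common log resolution, together with careful bookkeeping of the discrepancies of the exceptional divisors separating $T$ from the strict transforms of the $T_i$.
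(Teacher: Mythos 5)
Your treatment of well-definedness follows the same reduction as the paper (replace $\inf_T$ by a minimum over the finitely many components of $\Delta^{=1}_Y$ on one fixed resolution dominating $V$), and the paper itself leaves the comparison $\alpha_{P,T}\geq\min_i\alpha_{P,T_i}$ at the level of ``it is easy to see,'' so I will not press hard there; I only note that ``apply the negativity lemma to the relative canonical divisor of $Z\to Y$'' is not by itself an argument --- what is actually needed is adjunction for the snc sub-pair along the chain of strata of $\Delta^{=1}_Y$ containing the centre of $T$ on $Y$, and your sketch does not supply it.

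The genuine gap is in the effectiveness half. Your key reduction --- ``by further blowing up I may assume that some component $T$ of $\Delta^{=1}_Y$ dominating $V$ has, over $\eta_P$, a fibre component $E$ of multiplicity one with $E\not\subset\Supp(\Delta_Y-T)$'' --- is unjustified and is false in general. Already in codimension one (Kawakita's setting) the strict transform of $S=V$ is the \emph{only} lc place with centre $V$ (any divisorial valuation centred at the generic point of $S$ is $\mathrm{ord}_S$, since $\mathcal O_{X,\eta_S}$ is a DVR), the fibre of $f_T$ over $\eta_P$ is a single point, and its coefficient in $\Delta_T$ is the coefficient of the different at $P$, a crepant invariant which is frequently strictly positive (it is $1/2$ when $X$ is an $A_1$ surface point, $\Delta=S$ a ruling and $P$ the point over the vertex); no blow-up of $Y$ can make it zero. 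What your computation really needs is only a fibre component whose coefficient $c$ in $\Delta_T$ is \emph{nonnegative}, since then the coefficient of $\Delta_T+\lambda f_T^*P$ there is $c+\lambda m\geq\lambda>1$ for $\lambda>1$; but on a log resolution $\Delta_T$ may have negative coefficients (coming from positive-discrepancy $f$-exceptional divisors), and the existence of a fibre component with nonnegative coefficient is precisely the nontrivial effectivity statement to be proved --- it cannot be arranged by blowing up. The paper gets it by replacing the log resolution with a dlt blow-up $f'\colon Y'\to X$ extracting only divisors of discrepancy $\leq-1$ (so the crepant pullback $\Delta_{Y'}$ is effective and no component of $D$ is contracted), restricting to a normal component $S'$ of $D'$ and using divisorial adjunction to get $\Delta_{S'}\geq 0$; then every divisor of $S'$ dominating $P$ has multiplicity $\geq 1$ in $f'^*_{S'}P$ and coefficient $\geq 0$ in $\Delta_{S'}$, giving $\alpha_{P,S}\leq 1$ and hence $B_{V^\nu}\geq 0$. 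Without some such input (a model on which the crepant pullback, and hence the adjoint divisor, is effective) your argument does not close.
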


\begin{proof}
We take a log resolution $f\colon Y\to X$ as above. 
Let $D$ be the union of the irreducible 
components of $\Delta^{=1}_Y$ that 
are dominant onto $V$ by $f$. 
Without loss of generality, by 
replacing $Y$ with a higher model, 
we may assume that the induced dominant 
rational map $D\dashrightarrow 
W$ is a morphism. 
Moreover, we may assume that there exists 
a simple normal crossing divisor $\Sigma$ on $Y$ such that 
the support of the union of $\Sigma$ and ${\rm Supp} \Delta_Y$ is a simple 
normal crossing divisor on $Y$ and 
that $D\cap \Sigma={\rm Supp} f^*_DP$, 
where $f_D:=f|_D\colon D\to W$. Let $f'\colon Y'\to X$ 
be another log resolution such that $f'\colon Y'\to X$ and $D'$ satisfies 
the same condition. If $f'\colon Y'\to X$ factors through $f\colon Y\to X$, 
then we can directly check that 
\begin{equation*}
\min _S \alpha_{P, S}=\min _{S'} \alpha_{P, S'}
\end{equation*} 
holds, where $S$ (resp.~$S'$) runs over 
irreducible components of $D$ (resp.~$D'$). 
This implies that 
\begin{equation*}
\inf_T \alpha_{P, T}=\min _S \alpha_{P, S}\in \mathbb R
\end{equation*}
holds, where $S$ runs over irreducible components of $D$. 
Hence $B_W$ is a well-defined $\mathbb R$-divisor on $W$. 

From now on, we assume that $W=V^\nu$. 
Let $E$ be the reduced $f$-exceptional divisor on $Y$. 
By running a $(K_Y+f^{-1}_*\Delta^{<1}+
{\rm Supp} f^{-1}_*\Delta^{\geq 1}+E)$-minimal model program 
over $X$, we obtain a dlt blow-up $f'\colon Y'\to X$ 
(see Theorem \ref{x-thm2.10}). 
Note that no components of $D$ are contracted in the above minimal model 
program. 
We put $K_{Y'}+\Delta_{Y'}=f'^*(K_X+\Delta)$. 
Then $\Delta_{Y'}$ is effective by construction. 
Let $D'$ be the birational transform of $D$ on $Y'$. 
Since every irreducible component of $S'$ of $D'$ is normal, 
we obtain a projective surjective morphism 
$f'_{S'}\colon S'\to V^\nu$, which is induced by $f'\colon Y'\to X$, 
such that $K_{S'}+\Delta_{S'}=(K_{Y'}+\Delta_{Y'})|_{S'}$ and 
$K_{S'}+\Delta_{S'}\sim _{\mathbb R, V^\nu}0$. 
By adjunction, $\Delta_{S'}$ is effective. 
Hence, we can easily see that 
\begin{equation*}
\inf _T \alpha_{P, T}=\min _S \alpha _{P, S}\leq 1. 
\end{equation*}
This implies that 
$B_{V^\nu}=\sum _P (1-\inf_T \alpha_{P, T})P\geq 0$. 
\end{proof}

By the above construction of $B_W$, we obtain 
an $\mathbb R$-$b$-divisor 
$\mathbf B$ 
such that $\mathbf B_{W}=B_{W}$. 
Following \cite{hacon}, we say 
that $(V^{\nu},\mathbf B)$ is 
{\em log canonical} if $(W, \mathbf B_{W})$ 
is sub log canonical for all sufficiently higher 
model $W\to V^{\nu}$, equivalently, 
all coefficients of $\mathbf B$ are not greater than one. 

\begin{rem}\label{x-rem5.2} 
If $\dim V=\dim X-1$, then $B_{V^\nu}$ is nothing but Shokurov's 
different. Moreover, by definition, we can easily check that 
$K_W+B_W=\mu^*(K_{V^{\nu}}+B_{V^\nu})$ 
holds for every proper birational morphism $\mu\colon W\to V^\nu$ from 
a normal variety $W$. 
Hence $(V^\nu, B_{V^\nu})$ is log canonical in the usual sense 
if and only if $(V^\nu, \mathbf B)$ is log canonical. 
\end{rem}

\begin{rem}\label{x-rem5.3}
Our construction of $\mathbf B$ is slightly 
different from that of 
Hacon's b-divisor $\mathbf B(V; X, \Delta)$ in 
\cite{hacon} 
because we take 
the infimum of $\alpha_{P,T}$ among $T$. 
By definition, it is clear that $\mathbf B$ 
is greater than or equal to the $b$-divisor $\mathbf B(V; X, \Delta)$ 
defined in \cite{hacon}. 
We can prove that $\mathbf B$ coincides with 
Hacon's $\mathbf B(V; X, \Delta)$. 
For the details, see \cite{fujino-hashizume-proc}. 
\end{rem}

We are ready to state the main theorem of this section. 

\begin{thm}[Log canonical inversion of adjunction, {cf.~\cite{hacon}}]
\label{x-thm5.4}
With notation as above, $(X,\Delta)$ is log 
canonical near $V$ if and only if $(V^{\nu},\mathbf B)$ is log canonical. 
\end{thm}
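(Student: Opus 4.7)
The plan is to establish each implication separately. The forward direction is routine divisorial adjunction; the converse is the substantive inversion-of-adjunction statement, and I intend to handle it using the special crepant models constructed in Theorem~\ref{x-thm1.5}.

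For the ``only if'' direction, suppose $(X,\Delta)$ is log canonical near $V$. Fix any prime divisor $T$ over $X$ with $a(T,X,\Delta)=-1$ and $f(T)=V$, and take a log resolution $f\colon Y\to X$ of $(X,\Delta)$ on which $T$ appears as a component of $\Delta_Y^{=1}$, where $K_Y+\Delta_Y=f^{*}(K_X+\Delta)$. Because $(X,\Delta)$ is log canonical in a neighborhood of $V$, divisorial subadjunction produces a sub log canonical pair $(T,\Delta_T)$ with $\Delta_T=(\Delta_Y-T)|_{T}$ and $K_T+\Delta_T\sim_{\mathbb R,V^{\nu}}0$. For any prime divisor $P$ on any sufficiently high birational model $W\to V^{\nu}$, the pair $(T,\Delta_T+0\cdot f_T^{*}P)=(T,\Delta_T)$ is thus sub log canonical over the generic point of $P$, so by the definition $\alpha_{P,T}\geq 0$. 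Taking the infimum over all such $T$ yields $\coeff_{P}(\mathbf B_{W})=1-\inf_T\alpha_{P,T}\leq 1$, which shows $(V^{\nu},\mathbf B)$ is log canonical.

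For the ``if'' direction, suppose $(V^{\nu},\mathbf B)$ is log canonical. I plan to apply Theorem~\ref{x-thm1.5} to obtain a special crepant model $g\colon (Y,\Delta_Y)\to (X,\Delta)$ and then use property~(v), which reduces our claim to showing $\Supp\Delta_Y^{>1}\cap g^{-1}(V)=\emptyset$. Assume to the contrary that some component $F$ of $\Delta_Y^{>1}$ meets $g^{-1}(V)$. Since $V$ is an lc center of $(X,\Delta)$, property~(iii) of Theorem~\ref{x-thm1.5} combined with the definition of $\mathbf B$ ensures the existence of at least one component $T$ of $\Delta_Y^{=1}$ with $g(T)=V$. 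Granting that $T$ can be chosen so that $F\cap T\neq\emptyset$, divisorial adjunction on the dlt pair $(Y,\Delta_Y')$ restricts $K_Y+\Delta_Y=g^{*}(K_X+\Delta)$ to $T$ to produce an $\mathbb R$-divisor $\Delta_T$ with a component of coefficient $\coeff_{F}(\Delta_Y)>1$ supported on $F\cap T$. Pushing this component forward by $f_T\colon T\to V^{\nu}$ and blowing up $V^{\nu}$ enough so that its image becomes a prime divisor $P$ on some $W$, one obtains $\alpha_{P,T}<0$, hence $\coeff_{P}(\mathbf B_{W})>1$, contradicting the log canonicity of $(V^{\nu},\mathbf B)$.

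The main obstacle is the geometric step of arranging that the component $F\subset\Supp\Delta_Y^{>1}$ with $F\cap g^{-1}(V)\neq\emptyset$ can be chosen to meet some $T$ with $g(T)=V$. A priori, $F\cap g^{-1}(V)$ could sit entirely on $g$-exceptional divisors $E$ with $g(E)\subsetneq V$ and avoid every such $T$. To overcome this I plan to exploit property~(vi) of Theorem~\ref{x-thm1.5}: the effective divisor $\Gamma_Y$ with $\Supp\Gamma_Y=g^{-1}(\Nklt(X,\Delta))\supset \Supp\Delta_Y^{>1}\cup T$ and $-\Gamma_Y$ being $g$-semi-ample. A Koll\'ar-type connectedness argument on the fibers of $g$, combined with the $g$-semi-ampleness of $-\Gamma_Y$ and the negativity lemma, should propagate the incidence $F\cap g^{-1}(V)\ne\emptyset$ through $\Supp\Delta_Y^{\geq 1}$ to some component $T$ with $g(T)=V$; if necessary, one performs a further $\mathbb Q$-factorial dlt modification of $g$ over $X$ preserving the properties (i)--(vi) so that the desired intersection is forced. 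Once this connectedness step is secured, the adjunction computation sketched above delivers the required contradiction, completing the proof.
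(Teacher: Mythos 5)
Your ``only if'' direction and your contradiction mechanism (adjunction to a divisor $T$ with $a(T,X,\Delta)=-1$ dominating $V$, a coefficient $>1$ in $\Delta_T$ along a component mapping onto a divisor $P$ of a high model $W$, hence $\alpha_{P,T}<0$ and $\mathbf B_W^{>1}\neq 0$) are in line with what the paper does. But the ``if'' direction has a genuine gap, and it is exactly the step you flag yourself: you reduce to showing $\Supp\Delta_Y^{>1}\cap g^{-1}(V)=\emptyset$, and then you need to turn ``some component $F$ of $\Delta_Y^{>1}$ meets $g^{-1}(V)$'' into ``$F$ meets some component $T$ of $\Delta_Y^{=1}$ with $g(T)=V$''. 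The Koll\'ar--Shokurov connectedness you invoke only gives, in each fiber $g^{-1}(x)$ with $x\in V$, a connected chain of components of $\Supp\Delta_Y^{\geq 1}$ joining $F$ to $T$; the links of that chain may be coefficient-one divisors whose images are strictly smaller than $V$, so $F\cap T$ may well be empty and a single adjunction to $T$ detects nothing. Propagating a coefficient $>1$ along such a chain by iterated adjunction is essentially the hard content of inversion of adjunction (this is what Kawakita's and Hacon's arguments grapple with), so it cannot be waved through with ``should propagate''; likewise, a further dlt modification does not force two prescribed divisors to intersect. (A smaller issue: the existence of a component $T$ of $\Delta_Y^{=1}$ with $g(T)=V$ on the Theorem~\ref{x-thm1.5} model does not follow from property (iii); the paper has to arrange this in the construction, and the $\epsilon$-perturbed MMP used to build that model makes it less clear there than for the good dlt blow-up of Lemma \ref{x-lem3.5}.)

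The paper's proof avoids your connectedness problem by a different, and weaker, intermediate claim plus a structural step you do not use. Working with the good dlt blow-up $f\colon (Y,\Delta^\dag)\to X$ of Lemma \ref{x-lem3.5} (with $\Gamma_Y=\Delta_Y^{\geq 1}-\Supp\Delta_Y^{\geq 1}$ and $K_Y+\Delta^\dag=f^*(K_X+\Delta)-\Gamma_Y$ semi-ample over $X$), it only proves $S\cap\Supp\Gamma_Y=\emptyset$ for the one divisor $S\subset\Delta_Y^{=1}$ with $f(S)=V$ --- precisely your adjunction argument, which needs no propagation because the non-lc coefficients are assumed to touch $S$ itself. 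The global statement is then obtained by contracting via the $f$-semi-ample divisor $K_Y+\Delta^\dag$ to the log canonical modification $h\colon (Z,g_*\Delta^\dag)\to (X,\Delta^{<1}+\Supp\Delta^{\geq 1})$: there $-\Gamma_Z$ is $h$-ample, $\Exc(h)\subset\Supp\Gamma_Z$ (Remark \ref{x-rem3.4}), and $g(S)\cap\Supp\Gamma_Z=\emptyset$, so by connectedness of the fibers of the birational morphism $h$ over the normal variety $X$ (Zariski's main theorem), every fiber over a point of $V=h(g(S))$ is a single point lying outside $\Supp\Gamma_Z$; hence $h$ is an isomorphism near $g(S)$, $(X,\Delta)$ agrees there with the lc pair $(Z,g_*\Delta^\dag)$, and log canonicity near $V$ follows. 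This passage to the lc modification, where the exceptional locus is forced inside $\Supp\Gamma$, is the missing ingredient in your plan; without it (or a genuine substitute for the chain-propagation step), the proof is incomplete.
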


\begin{proof}
Since the problem is local, by shrinking $X$,  
we may assume that $X$ is quasi-projective. 
If $(X,\Delta)$ is log canonical near $V$, 
then it is easy to see that $(V^{\nu}, \mathbf B)$ is log canonical. 
Suppose that $(V^{\nu}, \mathbf B)$ is log canonical. 
By Lemma \ref{x-lem3.5}, we get a crepant 
model $f\colon (Y,\Delta_{Y})\to (X,\Delta)$ satisfying the following properties:
\begin{itemize}
\item[(i)] $a(E, X, \Delta)\leq -1$ for every $f$-exceptional 
divisor $E$ on $Y$. 
\item[(ii)] We define
\begin{equation*}
\Delta^\dag:=\Delta^{<1}_{Y}+
{\rm Supp}\Delta^{\geq1}_{Y}\qquad {\text {and}}\qquad \Gamma_{Y}:=
\Delta^{\geq1}_{Y}-{\rm Supp}\Delta^{\geq1}_{Y}. 
\end{equation*}
Then $(Y, \Delta^\dag)$ is a 
$\mathbb Q$-factorial dlt pair, $\Gamma_Y$ is effective, and the following 
equality 
\begin{equation*}
K_Y+\Delta^\dag=f^*(K_X+\Delta)-\Gamma_{Y}
\end{equation*} 
holds. 
\item[(iii)]
The divisor $K_{Y}+\Delta^{\dag}\sim_{\mathbb{R},X}-\Gamma_{Y}$ 
is semi-ample over $X$. 
\end{itemize}
Since $V$ is an lc center of $(X,\Delta)$, we 
have $f({\rm Supp}\Delta^{>1}_{Y})\not\supset V$. 
We may further assume 
that there exists a component $S$ of $\Delta^{=1}_{Y}$ 
such that $f(S)=V$. 
We note that $(V^\nu, \mathbf B)$ is log canonical 
by assumption. 
Suppose that $S\cap {\rm Supp} \Gamma_Y\ne \emptyset$ holds. 
We put $K_S+\Delta_S=(K_Y+\Delta_Y)|_S$ by adjunction. 
Since $(Y, \Delta^\dag)$ is a $\mathbb Q$-factorial dlt pair, 
$S$ is normal and ${\rm coeff}_P (\Delta_S)>1$ holds 
for every irreducible component $P$ of $S\cap {\rm Supp} \Gamma_Y$ 
(see also Lemma \ref{x-lem4.1}). 
By taking an appropriate birational model $W\to V^\nu$ of $V^\nu$, 
we may assume that the image of an irreducible component 
of $S\cap {\rm Supp} \Gamma_Y$ by the induced rational map 
$S\dashrightarrow W$ is a codimension one point of $W$. 
In this case, we can easily check $\mathbf B^{>1}_W\ne 0$. 
This is a contradiction. 
Hence we have $S\cap {\rm Supp} \Gamma_Y=\emptyset$. 

Let $g\colon Y\to Z$ be the contraction over $X$ 
induced by $K_Y+\Delta^\dag$. 
We put $\Gamma_{Z}=g_{*}\Gamma_{Y}$, and 
we put $h\colon Z\to X$ as the 
induced birational morphism. 
By construction, the morphism $(Z,g_{*}\Delta^{\dag})\to X$ 
is an lc modification of $X$ and 
$\Delta^{<1}+{\rm Supp}\Delta^{\geq1}$.  
Because $\Gamma_{Y}=g^{*}\Gamma_{Z}$, 
the divisor $-\Gamma_{Z}$ is ample over $X$ 
and ${\rm Supp}\Gamma_{Y}=g^{-1}({\rm Supp}\Gamma_{Z})$. 
This fact and $S\cap {\rm Supp}
\Gamma_{Y}=\emptyset$ imply that 
$g(S)\cap {\rm Supp}\Gamma_{Z} =\emptyset$. 
Furthermore, the inclusion 
${\rm Exc}(h)\subset {\rm Supp}\Gamma_{Z}$ holds by 
Remark \ref{x-rem3.4}. 
Therefore, $h\colon Z\to X$ is an 
isomorphism on $Z\setminus {\rm Supp}
\Gamma_{Z}$ which contains $g(S)$. 

We have proved that the lc modification 
\begin{equation*}
h\colon (Z,g_{*}\Delta^{\dag})\to (X,\Delta^{<1}+{\rm Supp}\Delta^{\geq1})
\end{equation*} 
is an isomorphism near $g(S)$. 
Since $h(g(S))=f(S)=V$, we see that $(X,\Delta)$ is log canonical near $V$.  
\end{proof}

We will treat a more precise version of adjunction and 
inversion of adjunction for log canonical centers of arbitrary 
codimension in \cite{fujino-hashizume-inversion}. 
We strongly recommend the interested reader to 
see \cite{fujino-hashizume-inversion}. 

Kawakita's inversion of adjunction on log canonicity is a 
very special case of Theorem \ref{x-thm5.4}. 

\begin{cor}[see {\cite{kawakita}}]\label{x-cor5.5}
Let $(X, S+B)$ be a normal pair such that 
$S$ is a reduced divisor, $B$ is effective, and 
$S$ and $B$ have no common irreducible components. 
Let $\nu\colon S^\nu\to S$ be the normalization of $S$. 
We put $K_{S^\nu}+B_{S^\nu}=\nu^*(K_X+S+B)$. 
Then $(X, S+B)$ is log canonical 
near $S$ if and only if $(S^\nu, B_{S^\nu})$ is log 
canonical. 
\end{cor}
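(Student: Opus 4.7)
The plan is to deduce Corollary \ref{x-cor5.5} as an immediate consequence of Theorem \ref{x-thm5.4} applied, component by component, to the prime decomposition of $S$, together with the identification in Remark \ref{x-rem5.2} of the $b$-divisor $\mathbf{B}$ with Shokurov's different in the codimension-one case. The forward implication is standard adjunction: if $(X, S+B)$ is log canonical near $S$, then pulling back $K_X + S + B$ via $\nu$ on any log resolution of $(X, S+B)$ shows directly that $(S^\nu, B_{S^\nu})$ is log canonical and $B_{S^\nu}$ is effective, so I focus on the reverse direction.

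Assume $(S^\nu, B_{S^\nu})$ is log canonical and decompose $S = \sum_i S_i$ into prime components. Then $S^\nu = \coprod_i S_i^\nu$, and each $(S_i^\nu, B_{S_i^\nu})$ is log canonical, where $B_{S_i^\nu} := B_{S^\nu}|_{S_i^\nu}$. Each $S_i$ is a prime divisor on the normal variety $X$ with $a(S_i, X, S+B) = -1$; moreover, at the generic point of $S_i$ the divisor $B$ is absent by the hypothesis that $S$ and $B$ share no components, so $(X, S+B)$ is locally of the form $(X, S_i)$ there and is in particular sub-log canonical. Hence $S_i$ is a log canonical center of $(X, S+B)$ in the sense of Definition \ref{x-def2.2}, and Theorem \ref{x-thm5.4} applies to $(X, S+B)$ with lc center $V = S_i$.

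For $V = S_i$, which is of codimension one, Remark \ref{x-rem5.2} gives the crucial dictionary: $B_{S_i^\nu}$ coincides with Shokurov's different, and $(S_i^\nu, \mathbf{B}(S_i; X, S+B))$ is log canonical in the sense used in Theorem \ref{x-thm5.4} if and only if $(S_i^\nu, B_{S_i^\nu})$ is log canonical in the usual sense. Therefore Theorem \ref{x-thm5.4} yields that $(X, S+B)$ is log canonical in an open neighborhood of each $S_i$, and taking the union gives the required conclusion near $S = \bigcup_i S_i$. The only point needing verification is the identification between $\mathbf{B}$ and Shokurov's different in codimension one, which is already recorded precisely in Remark \ref{x-rem5.2}; there is no substantive obstacle beyond the invocation of Theorem \ref{x-thm5.4}, which carries the genuine content.
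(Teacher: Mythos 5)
Your proposal is correct and follows essentially the same route as the paper, which also deduces the corollary directly from Theorem \ref{x-thm5.4} together with the identification of $B_{S^\nu}$ with Shokurov's different recorded in Remark \ref{x-rem5.2}. Your componentwise application to the prime divisors $S_i$ and the explicit check that each $S_i$ is an lc center merely spell out what the paper leaves implicit.
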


\begin{proof} 
It is a direct consequence of Theorem \ref{x-thm5.4} (see 
also Remark \ref{x-rem5.2}). 
\end{proof}

\section{Proof of Theorem \ref{x-thm1.7}}\label{x-sec6}

This section is devoted to the proof of Theorem \ref{x-thm1.7}. 
Before proving Theorem \ref{x-thm1.7}, we introduce some lemmas. 

\begin{lem}\label{x-lem6.1}
Let $X$ be a normal quasi-projective 
variety and let $\Delta$ be an effective $\mathbb{R}$-divisor 
on $X$ such that $K_{X}+\Delta$ is $\mathbb{R}$-Cartier. 
Let $\pi \colon X \to S$ be a projective morphism onto 
a scheme $S$ such that $-(K_{X}+\Delta)$ is $\pi$-ample. 
Suppose that 
\begin{equation*}
\pi \colon {\rm Nklt}(X,\Delta) \to \pi({\rm Nklt}(X,\Delta))
\end{equation*} 
is finite. 
Let $g\colon (Y,\Delta_{Y})\to (X,\Delta)$ and $\Gamma_{Y}$ 
be as in Theorem \ref{x-thm1.6}. 
We consider a sequence of finite steps of 
a $(K_{Y}+\Delta_{Y}-\Gamma_{Y})$-minimal model program over $S$ 
\begin{equation*}
(Y,\Delta_{Y}-\Gamma_{Y})\dashrightarrow 
\cdots \dashrightarrow (Y',\Delta_{Y'}-\Gamma_{Y'}). 
\end{equation*}  
Let $C'\subset Y'$ be a curve contained in a 
fiber of  $Y'\to S$ such that 
$(K_{Y'}+\Delta_{Y'}-\Gamma_{Y'})\cdot C'<0$ 
and let $U$ be a Zariski open subset of $S$ containing $\pi_{Y'}(C')$, 
where $\pi_{Y'}\colon Y'\to S$. 
Suppose that 
the birational map $Y\dashrightarrow Y'$ is 
an isomorphism on an open subset containing 
\begin{equation*}
{\rm Supp}\Gamma_{Y}\cap \pi^{-1}_Y(U)=g^{-1}({\rm Nklt}(X,\Delta))
\cap \pi^{-1}_Y(U), 
\end{equation*} 
where $\pi_Y=\pi\circ g\colon Y\to S$. 
Then, the following properties hold true: 
\begin{itemize}
\item[\em{(i)}] 
${\rm Supp}\Gamma_{Y'}\not\supset C'$, and 
\item[\em{(ii)}] 
$(K_{Y'}+\Delta_{Y'})\cdot C'<0$. 
\end{itemize}
\end{lem}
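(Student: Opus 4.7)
The approach is to analyze $C'$ by transporting it back to $Y$ via the assumed isomorphism on a neighborhood $V\subset Y$ of $\Supp\Gamma_{Y}\cap \pi_{Y}^{-1}(U)$. Let $\phi\colon Y\dashrightarrow Y'$ denote the birational map, with image $\phi(V)\subset Y'$. First observe the easy dichotomy: if $C'\cap\phi(V)=\emptyset$, then because $\Supp\Gamma_{Y'}\cap\pi_{Y'}^{-1}(U)\subset\phi(V)$ and $C'\subset\pi_{Y'}^{-1}(U)$, we get $C'\cap\Supp\Gamma_{Y'}=\emptyset$, so $\Gamma_{Y'}\cdot C'=0$ and both (i) and (ii) are immediate. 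Otherwise $C'\cap\phi(V)$ is a dense open subset of $C'$, and I let $\tilde C\subset Y$ be the Zariski closure of its preimage under $\phi^{-1}$, an irreducible curve lying in $\pi_{Y}^{-1}(P)$ that agrees with $C'$ on the isomorphism locus.

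For (i), I argue by contradiction. If $C'\subset\Supp\Gamma_{Y'}$, then $C'\subset\phi(V)$, so $\tilde C\subset V\cap\Supp\Gamma_{Y}$. Hence $g(\tilde C)\subset \Nklt(X,\Delta)\cap\pi^{-1}(P)$, a finite set by the finiteness hypothesis, so $g$ contracts $\tilde C$. Combining $(K_Y+\Delta_Y)\cdot\tilde C=g^*(K_X+\Delta)\cdot\tilde C=0$ with the $g$-semi-ampleness, hence $g$-nefness, of $-\Gamma_Y$ from Theorem~\ref{x-thm1.5}(vi)(b), we get $(K_Y+\Delta_Y-\Gamma_Y)\cdot\tilde C\ge 0$. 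Since $\tilde C\subset V$ and $\phi$ is an isomorphism there, this number equals $(K_{Y'}+\Delta_{Y'}-\Gamma_{Y'})\cdot C'$, contradicting the hypothesis $<0$.

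For (ii), by (i) we have $\tilde C\not\subset\Supp\Gamma_Y$. Since all points of $C'\cap\Supp\Gamma_{Y'}$ and of $\tilde C\cap\Supp\Gamma_Y$ automatically lie in $\phi(V)$ respectively $V$ (because $C'\subset\pi_{Y'}^{-1}(U)$ and $\tilde C\subset\pi_Y^{-1}(U)$), local intersection multiplicities match across $\phi$, giving the key identity $\Gamma_{Y'}\cdot C'=\Gamma_Y\cdot\tilde C$. If $\tilde C\cap\Supp\Gamma_Y=\emptyset$, then $\Gamma_{Y'}\cdot C'=0$ and (ii) follows at once. Otherwise the contradiction-trick from (i), now with $\Gamma_Y\cdot\tilde C>0$, prevents $g$ from contracting $\tilde C$, so $g(\tilde C)=C_X$ is a curve in $\pi^{-1}(P)$; the $\pi$-ampleness of $-(K_X+\Delta)$ together with the projection formula then gives $(K_Y+\Delta_Y)\cdot\tilde C=\deg(g|_{\tilde C})\,(K_X+\Delta)\cdot C_X<0$.

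Finally, to transport back to $Y'$, I take a common resolution $p\colon W\to Y$, $q\colon W\to Y'$ of $\phi$ and invoke the standard negativity lemma for the MMP to write
\[
p^*(K_Y+\Delta_Y-\Gamma_Y)=q^*(K_{Y'}+\Delta_{Y'}-\Gamma_{Y'})+E
\]
with $E\ge 0$ and $q$-exceptional. The strict transform $\tilde{\tilde{C}}$ of $C'$ on $W$ is not $q$-exceptional, so $E\cdot\tilde{\tilde{C}}\ge 0$; combining this with $\Gamma_{Y'}\cdot C'=\Gamma_Y\cdot\tilde C$ yields
\[
(K_{Y'}+\Delta_{Y'})\cdot C'=(K_Y+\Delta_Y)\cdot\tilde C-E\cdot\tilde{\tilde{C}}\le (K_Y+\Delta_Y)\cdot\tilde C<0,
\]
establishing (ii). The main technical points I expect to have to be careful with are the intersection-multiplicity identity $\Gamma_{Y'}\cdot C'=\Gamma_Y\cdot\tilde C$ (which crucially uses that both intersection loci sit inside the isomorphism locus via $C'\subset\pi_{Y'}^{-1}(U)$) and justifying the $q$-exceptionality of $E$ for our finite, possibly non-terminating, sequence of MMP steps.
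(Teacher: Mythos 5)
Your argument is correct and is essentially the paper's own proof: you transport $C'$ to a curve on $Y$ through the isomorphism locus, compare intersection numbers on a common resolution using the negativity of the MMP steps (the paper's single effective divisor $F$ in \eqref{x-eq6.1} plays the role of your $E$ together with your identity $\Gamma_{Y'}\cdot C'=\Gamma_Y\cdot\tilde C$), and then conclude from the $g$-nefness of $-\Gamma_Y$, the equality $K_Y+\Delta_Y=g^*(K_X+\Delta)$, the finiteness of $\pi|_{\Nklt(X,\Delta)}$, and the $\pi$-ampleness of $-(K_X+\Delta)$. The differences are only organizational (you merge the paper's case split on whether $g(C)$ is a point into one contradiction for (i)), and the compatibility facts you flag as delicate (the $\Gamma$-identity over $U$ and the exceptionality of $E$) are precisely the ones the paper also invokes without detailed justification.
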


\begin{proof}
If $C'\cap {\rm Supp}\Gamma_{Y'}$ is empty, 
then it is obvious that ${\rm Supp}\Gamma_{Y'}\not\supset C'$ 
and $(K_{Y'}+\Delta_{Y'})\cdot C'=(K_{Y'}+\Delta_{Y'}-\Gamma_{Y'})\cdot C'<0$ 
holds. 
Therefore, we may assume that $C'$ intersects ${\rm Supp}\Gamma_{Y'}$. 
In the argument below, we can shrink $S$ and assume that 
$S=U$. 

We take a common resolution $\phi \colon W\to Y$ 
and $\phi' \colon W \to Y'$ of the 
birational map $Y\dashrightarrow Y'$. 
Since $Y\dashrightarrow Y'$ is 
the restriction of a  
$(K_{Y}+\Delta_{Y}-\Gamma_{Y})$-minimal model 
program and 
$Y\dashrightarrow Y'$ is an isomorphism 
on an open subset containing ${\rm Supp}\Gamma_{Y}$, 
there is an effective divisor $F$ on $W$ such that
\begin{equation}\label{x-eq6.1}
\begin{split}
\phi^{*}(K_{Y}+\Delta_{Y}-\Gamma_{Y})&
=\phi'^{*}(K_{Y'}+\Delta_{Y'}-\Gamma_{Y'})+F,\quad {\text{and}}\\
\phi^{*}(K_{Y}+\Delta_{Y})&=\phi'^{*}(K_{Y'}+\Delta_{Y'})+F.
\end{split}
\end{equation}
Since $C'$ intersects ${\rm Supp}\Gamma_{Y'}$ 
and the birational map $Y\dashrightarrow Y'$ 
is an isomorphism on an open subset containing 
${\rm Supp}\Gamma_{Y}$, we see that $C'$ intersects 
an open subset $U'\subset Y'$ on which $Y'\dashrightarrow Y$ 
is an isomorphism.   
Hence we can find a curve $C_{W}$ on $W$ and a curve 
$C$ on $Y$ such that $\phi(C_{W})=C$, $\phi'(C_{W})=C'$, 
and $(F\cdot C_{W})\geq0$.
By \eqref{x-eq6.1}, we have $(K_{Y}+\Delta_{Y}-\Gamma_{Y})\cdot C
\geq (K_{Y'}+\Delta_{Y'}-\Gamma_{Y'})\cdot C'$ and 
$(K_{Y}+\Delta_{Y})\cdot C\geq (K_{Y'}+\Delta_{Y'})\cdot C'$. 
Furthermore, since $Y\dashrightarrow Y'$ is an isomorphism 
on an open subset containing ${\rm Supp}\Gamma_{Y}$, 
the condition $C'\subset {\rm Supp}\Gamma_{Y'}$ is equivalent 
to $C\subset{\rm Supp}\Gamma_{Y}$. 
From these facts, it is sufficient to show that
\begin{itemize}
\item[(a)]
${\rm Supp}\Gamma_Y\not\supset C$, and 
\item[(b)]
$(K_{Y}+\Delta_Y)\cdot C<0$. 
\end{itemize}
We recall that $g\colon Y \to X$ is the birational 
morphism as in Theorem \ref{x-thm1.6}. 
Therefore, $-\Gamma_{Y}$ is $g$-nef 
and $K_{Y}+\Delta_{Y}=g^{*}(K_{X}+\Delta)$. 
By hypothesis, $-(K_{X}+\Delta)$ is ample over $S$. 

\setcounter{step}{0}
\begin{step}\label{x-step6.1.1}
In this step, we will 
prove that $g(C)$ cannot be a point. 

Suppose by contradiction that $g(C)$ is a point. 
Then $-\Gamma_{Y}\cdot C\geq 0$ because 
$-\Gamma_{Y}$ is $g$-nef. 
On the other hand, by recalling that $C'$ 
intersects ${\rm Supp}\Gamma_{Y'}$ and 
$Y\dashrightarrow Y'$ is an isomorphism 
on an open subset containing ${\rm Supp}\Gamma_{Y}$, 
we see that $C$ intersects ${\rm Supp}\Gamma_{Y}$. 
Since ${\rm Supp}\Gamma_{Y}=g^{-1}({\rm Nklt}(X,\Delta))$, 
we have $g(C)\in {\rm Nklt}(X,\Delta)$. 
Therefore 
\begin{equation*}
C\subset g^{-1}(g(C))\subset g^{-1}({\rm Nklt}(X,\Delta))
={\rm Supp}\Gamma_{Y}. 
\end{equation*} 
This shows that $Y\dashrightarrow Y'$ is 
an isomorphism on an open subset containing $C$. 
Thus, we have 
$(K_{Y}+\Delta_{Y}-\Gamma_{Y})\cdot C
= (K_{Y'}+\Delta_{Y'}-\Gamma_{Y'})\cdot C'$.
Then we obtain 
\begin{equation*}
\begin{split}
0=g^*(K_{X}+\Delta)\cdot C&
=(K_{Y}+\Delta_{Y})\cdot C
\\ &\leq (K_{Y}+\Delta_{Y}-\Gamma_{Y})\cdot C\\
&=(K_{Y'}+\Delta_{Y'}-\Gamma_{Y'})\cdot C'<0. 
\end{split}
\end{equation*}
This is a contradiction. 
Therefore, we see that $g(C)$ cannot be a point. 
\end{step}
\begin{step}\label{x-step6.1.2}
By Step \ref{x-step6.1.1}, we may assume that $g(C)$ is a curve. 
By construction, $\pi(g(C))$ is a point. 
Let us recall that ${\rm Supp}\Gamma_{Y}=g^{-1}({\rm Nklt}(X,\Delta))$ holds 
and that
$\pi \colon {\rm Nklt}(X,\Delta) \to \pi({\rm Nklt}(X,\Delta))$ 
is finite. Therefore, if ${\rm Supp}
\Gamma_{Y}\supset C$, then $\pi(g(C))$ is not a point, 
which is a contradiction. 
Thus, we see that ${\rm Supp} 
\Gamma_{Y}\not\supset C$, which is the first property we wanted to prove. 
Since $g(C)$ is a curve, $\pi(g(C))$ 
is a point, and $-(K_{X}+\Delta)$ is 
ample over $S$, we have $(K_{X}+\Delta)\cdot g(C)<0$. 
Hence we obtain 
\begin{equation*}
\begin{split}
(K_{Y}+\Delta_{Y})\cdot C=g^*(K_{X}+\Delta)\cdot C<0, 
\end{split}
\end{equation*}
which is the second property we wanted to prove. 
\end{step}
From the above arguments, we 
obtain that ${\rm Supp}\Gamma_{Y'}\not\supset C'$ 
and $(K_{Y'}+\Delta_{Y'})\cdot C'<0$. 
We finish the proof of Lemma \ref{x-lem6.1}. 
\end{proof}

Although the following lemma is more or less well known to 
the experts, we state it here explicitly for the benefit of the reader. 

\begin{lem}[Relative Kawamata--Viehweg vanishing theorem]
\label{x-lem6.2}
Let $V$ be a normal variety 
and let $\Delta_V$ be an effective $\mathbb R$-divisor on $V$ such 
that $K_V+\Delta_V$ is $\mathbb R$-Cartier 
and that $(V, \{\Delta_V\})$ is klt. 
Let $p\colon V\to W$ be a projective surjective 
morphism between normal varieties with connected fibers. 
Assume that $-(K_V+\Delta_V)$ is $p$-ample. 
Then $R^ip_*\mathcal O_V(-\lfloor \Delta_V\rfloor)=0$ holds 
for every $i>0$. 
This implies 
that $\lfloor \Delta_V\rfloor$ is connected in a neighborhood 
of any fiber of $p$. 
In particular, if $(V, \Delta_V)$ is klt, then 
$R^ip_*\mathcal O_V=0$ for every $i>0$.  
\end{lem}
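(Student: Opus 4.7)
The plan is to reduce the vanishing to the standard relative Kawamata--Viehweg vanishing theorem for klt pairs with $\mathbb R$-boundaries, as formulated for instance in \cite{fujino-foundations}. The identity
\[
-\lfloor \Delta_V\rfloor -\bigl(K_V+\{\Delta_V\}\bigr)\sim_{\mathbb R} -(K_V+\Delta_V)
\]
rewrites the reflexive sheaf $\mathcal O_V(-\lfloor \Delta_V\rfloor)$ as $K_V+\{\Delta_V\}$ plus the $\mathbb R$-Cartier divisor $-(K_V+\Delta_V)$, which is $p$-ample by hypothesis. Since $(V,\{\Delta_V\})$ is klt, the relative Kawamata--Viehweg vanishing theorem for $\mathbb R$-divisors applies and yields
\[
R^ip_*\mathcal O_V(-\lfloor \Delta_V\rfloor)=0\qquad \text{for every }i>0.
\]
The only delicate point is that $\lfloor\Delta_V\rfloor$ need not be Cartier, so one must use the form of KV vanishing that treats the round-down of a boundary $\mathbb R$-divisor (equivalently, Nadel vanishing for the multiplier ideal of $(V,\{\Delta_V\})$, which is trivial here because $\{\Delta_V\}$ already has all coefficients $<1$).

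For the connectedness assertion, I would pass to the short exact sequence
\[
0\longrightarrow \mathcal O_V(-\lfloor\Delta_V\rfloor)\longrightarrow \mathcal O_V\longrightarrow \mathcal O_{\lfloor\Delta_V\rfloor}\longrightarrow 0,
\]
push forward by $p$, and use the vanishing $R^1p_*\mathcal O_V(-\lfloor\Delta_V\rfloor)=0$ to deduce that $p_*\mathcal O_V\to p_*\mathcal O_{\lfloor\Delta_V\rfloor}$ is surjective. Because $V$ is normal and $p$ has connected fibers, $p_*\mathcal O_V=\mathcal O_W$, so the restriction of $\lfloor\Delta_V\rfloor$ to a suitable open neighborhood of any fiber has connected structure sheaf and is therefore connected.

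The \emph{in particular} clause is immediate, since $(V,\Delta_V)$ klt forces $\lfloor\Delta_V\rfloor=0$, hence $\mathcal O_V(-\lfloor\Delta_V\rfloor)=\mathcal O_V$. The main (mild) obstacle is simply invoking the $\mathbb R$-divisor version of relative KV vanishing in the correct form; once that is cited, everything else is a formal consequence of the long exact sequence and Stein factorization.
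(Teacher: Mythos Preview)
Your proof is correct and follows essentially the same route as the paper: rewrite $-\lfloor\Delta_V\rfloor$ as $K_V+\{\Delta_V\}$ plus the $p$-ample divisor $-(K_V+\Delta_V)$, apply the relative Kawamata--Viehweg vanishing theorem for klt pairs (the paper cites \cite[Corollary~5.7.7]{fujino-foundations}), and then use the short exact sequence together with $p_*\mathcal O_V=\mathcal O_W$ to deduce connectedness. The klt special case is handled identically.
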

\begin{proof}
Since 
\begin{equation*} 
-\lfloor \Delta_V\rfloor -(K_V+\{\Delta_V\})=-(K_V+\Delta_V)
\end{equation*} 
is $p$-ample, we have $R^ip_*\mathcal O_V(-\lfloor \Delta_V\rfloor)=0$ 
for every $i>0$ by the relative Kawamata--Viehweg vanishing 
theorem (see \cite[Corollary 5.7.7]{fujino-foundations}). 
We consider the following short exact sequence 
\begin{equation*} 
0\to \mathcal O_V(-\lfloor \Delta_V\rfloor)\to \mathcal O_V
\to \mathcal O_{\lfloor \Delta_V\rfloor}\to 0. 
\end{equation*}  
Since $R^1p_*\mathcal O_V(-\lfloor \Delta_V\rfloor)=0$, 
we obtain the following short exact sequence: 
\begin{equation*} 
0\to p_*\mathcal O_V(-\lfloor \Delta_V\rfloor)\to 
\mathcal O_W\to p_*\mathcal O_{\lfloor \Delta_V\rfloor}\to 0. 
\end{equation*} 
This implies that ${\rm Supp} \lfloor \Delta_V\rfloor$ is connected 
in a neighborhood of any fiber of $p$. 
If we further assume that $(V, \Delta_V)$ is klt, then 
$\lfloor \Delta_V\rfloor=0$. Therefore, $R^ip_*\mathcal O_V=0$ 
for every $i>0$ when $(V, \Delta_V)$ is klt. 
\end{proof}

We are ready to prove Theorem \ref{x-thm1.7}. 

\begin{proof}[Proof of Theorem \ref{x-thm1.7}] 
By shrinking $S$ suitably, we may assume that 
$X$ and $S$ are both quasi-projective. 
Moreover, we may further assume 
that $\pi_*\mathcal O_X\simeq \mathcal O_S$ 
by taking the Stein factorization. 
By Theorem \ref{x-thm1.6}, we can construct a projective 
birational morphism $g\colon  Y\to X$ from a normal 
$\mathbb Q$-factorial variety $Y$ and 
an effective $\mathbb R$-divisor $\Gamma _Y$ 
on $Y$ satisfying (i)--(vi) in Theorem 
\ref{x-thm1.6}. Since $K_Y+\Delta_Y=g^*(K_X+\Delta)$, 
$(K_Y+\Delta_Y)|_{{\rm Nklt}(Y, \Delta_Y)}$ is nef 
over $S$ by Theorem \ref{x-thm1.6} (iv). 
Let us consider $\pi_Y:=\pi\circ g\colon  Y\to S$. 
We run a $(K_Y+\Delta_Y-\Gamma _Y)$-minimal 
model program over $S$ with scaling of an ample divisor. 
Then 
we have a sequence of flips and divisorial contractions 
\begin{equation*} 
\xymatrix{
Y=:Y_0\ar@{-->}[r]^-{\phi_0}
& Y_1\ar@{-->}[r]^-{\phi_1} & \cdots \ar@{-->}[r]^-{\phi_{i-1}}
& Y_i \ar@{-->}[r]^-{\phi_i}&\cdots 
} 
\end{equation*} 
over $S$. 
As usual, we put 
$(Y_0, \Delta_{Y_0}-\Gamma _{Y_0}):=(Y, \Delta_Y-\Gamma _Y)$, 
$\Delta_{Y_{i+1}}={\phi_i}_*\Delta_{Y_i}$, $\Gamma _{Y_{i+1}}=
{\phi_i}_*\Gamma _{Y_i}$, and $\pi_{Y_i}\colon Y_i\to S$ for 
every $i$. 

If $\dim S<\dim X$, then $K_Y+\Delta_Y-\Gamma _Y$ is not 
pseudo-effective over $S$ since $-(K_X+\Delta)$ is $\pi$-ample. 
Hence, the above minimal model program terminates at a Mori 
fiber space $p\colon (Y_k, \Delta_{Y_k}-\Gamma_{Y_k})\to Z$ 
over $S$ (see \cite{bchm}). 
 
If $\dim S=\dim X$, then $K_Y+\Delta_Y-\Gamma _Y$ is big over 
$S$ and $(Y, \Delta_Y-\Gamma _Y)$ is 
klt by (vi) in Theorem \ref{x-thm1.6}. 
Therefore, the minimal model program terminates at a good minimal 
model $(Y_k, \Delta_{Y_k}-\Gamma_{Y_k})$ over $S$ (see \cite{bchm}). 
 
\setcounter{case}{0}

\begin{case}\label{x-case1.5.1}
In this case, we assume that $\dim S=\dim X$ and 
that there exists a Zariski open neighborhood $U$ of $P$ 
such that $Y=Y_0\dashrightarrow Y_k$ is an isomorphism 
on some open subset containing ${\rm Supp} \Gamma _Y\cap 
\pi^{-1}_Y(U)$. 

Since $\dim S=\dim X$, $(Y_k, \Delta_{Y_k}-\Gamma _{Y_k})$ 
is a good minimal model 
over $S$. In particular, $K_{Y_k}+\Delta_{Y_k}-\Gamma _{Y_k}$ is 
nef over $S$. 
We can take a curve $C_0$ on $Y_0=Y$ such that 
$g(C_0)=C^\dag$ and $C_0\cap 
{\rm Supp} \Gamma _Y=C_0\cap {\rm Nklt}(Y, \Delta_Y)\ne \emptyset$. 
Since $-(K_X+\Delta)\cdot C^\dag>0$, 
$-(K_Y+\Delta_Y)\cdot C_0>0$ holds. 
Since $g(C_0)=C^\dag$, we obtain 
$C_0\not\subset {\rm Supp} \Gamma _Y$ 
because $\pi\colon {\rm Nklt}(X, \Delta)\to \pi({\rm Nklt}(X, \Delta))$ is finite and 
$\pi(C^\dag)=P$. 
Hence we have $C_0\cdot \Gamma _Y>0$. 
Therefore, $-(K_Y+\Delta_Y-\Gamma_Y)\cdot C_0>0$ holds. 
By assumption, we can easily see 
that $Y=Y_0\dashrightarrow Y_k$ is an isomorphism 
at the generic point of $C_0$. 
Thus, by the negativity lemma, we can check that 
\begin{equation*} 
0<-(K_Y+\Delta_Y-\Gamma _Y)\cdot C_0\leq -(K_{Y_k}
+\Delta_{Y_k}-\Gamma_{Y_k})\cdot C_k
\end{equation*} 
holds, where $C_k$ is the strict transform of $C_0$ on $Y_k$. 
This is a contradiction because $K_{Y_k}+\Delta_{Y_k}-\Gamma_{Y_k}$ 
is nef over $S$. Hence this case never happens. 
\end{case}

\begin{case}\label{x-case1.5.2}
In this case, we assume that $\dim S<\dim X$ and 
that there exists a Zariski open neighborhood $U$ of $P$ 
such that $Y=Y_0\dashrightarrow Y_k$ is an isomorphism 
on some open subset containing ${\rm Supp} \Gamma _Y\cap 
\pi^{-1}_Y(U)$. 

Since $\dim S<\dim X$, the $(K_Y+\Delta_Y-\Gamma_Y)$-minimal 
model program 
terminates at a Mori fiber space $p\colon 
(Y_k, \Delta_{Y_k}-\Gamma_{Y_k}) \to Z$ over $S$. 
\begin{equation*} 
\xymatrix{
Y=Y_0\ar@{-->}[r]^-{\phi_0}\ar[ddr]_-{\pi_Y=\pi\circ g}
& Y_1\ar@{-->}[r]^-{\phi_1} & \cdots \ar@{-->}[r]^-{\phi_{k-1}}
& Y_k\ar[d]^-p\\
& & &  Z\ar[lld]^-{\pi_Z} \\ 
& S &&
}
\end{equation*}  
We note that $P\in \pi_{Y_k}({\rm Supp} \Gamma_{Y_k})$ 
since $P\in \pi({\rm Nklt}(X, \Delta)) =\pi_Y({\rm Supp} \Gamma_Y)$. 
Hence we can take a curve $C_k$ on $Y_k$ such that 
$p(C_k)$ is a point, $\pi_{Y_k}(C_k)=P$, 
and $C_k\cap {\rm Supp} \Gamma _{Y_k}\ne \emptyset$. 
Then, by Lemma \ref{x-lem6.1}, 
$-(K_{Y_k}+\Delta_{Y_k})\cdot C_k>0$ 
and $C_k \not \subset {\rm Supp} \Gamma_{Y_k}$. 
In particular, $\Gamma _{Y_k}$ is $p$-ample. 
Since $(Y_k, \Delta_{Y_k}-\Gamma_{Y_k})$ is klt and 
$-(K_{Y_k}+\Delta_{Y_k}-\Gamma_{Y_k})$ is $p$-ample, 
we have $R^ip_*\mathcal O_{Y_k}=0$ for every $i>0$ 
by Lemma \ref{x-lem6.2}. We put $\pi_Z\colon Z\to S$. 
Since 
\begin{equation*} 
-\lfloor 
\Delta_{Y_k}\rfloor -(K_{Y_k}+\{\Delta_{Y_k}\})=-(K_{Y_k}+\Delta_{Y_k})
\end{equation*}  
is $p$-ample and $(Y_k, \{\Delta_{Y_k}\})|_{\pi^{-1}_{Y_k}(U)}$ is klt, we 
obtain that 
$R^ip_*\mathcal O_{Y_k}(-\lfloor \Delta_{Y_k}\rfloor)=0$ holds 
on $\pi^{-1}_Z(U)$ 
for 
every $i>0$ and that 
${\rm Supp} \lfloor \Delta_{Y_k}\rfloor ={\rm Supp} \Gamma_{Y_k}$ is connected 
in a neighborhood of any fiber of $p$ on $\pi^{-1}_Z(U)$ 
by Lemma \ref{x-lem6.2}. 
By Lemma \ref{x-lem6.1}, we see that 
${\rm Supp} \Gamma_{Y_k} \cap \pi^{-1}_{Y_k}(U)$ is finite 
over $\pi^{-1}_Z(U)$. 
Hence, as in Case 1 in the proof of \cite[Proposition 9.1]{fujino}, 
$\dim p^{-1}(z)=1$ for every closed point $z\in \pi^{-1}_Z(U)$. 
Then, by \cite[Lemma 8.2]{fujino}, $C_k\simeq \mathbb P^1$, 
$C_k\cap {\rm Supp} \Gamma_{Y_k}$ is a point, and 
$0 < -(K_{Y_k}+\Delta_{Y_k})\cdot C_k\leq 1$ holds. 
By using the negativity lemma, 
we can check that 
\begin{equation*} 
-(K_{Y_0}+\Delta_{Y_0})\cdot C_0\leq 
-(K_{Y_k}+\Delta_{Y_k})\cdot C_k\leq 1
\end{equation*} 
holds, where $C_0$ is the strict transform of $C_k$ on $Y_0=Y$. 
Note that $C_0\cap {\rm Nklt}(Y_0, \Delta_{Y_0})=C_0\cap 
{\rm Supp} \Gamma_Y$ 
is a point since $Y=Y_0\dashrightarrow Y_k$ is 
an isomorphism in a neighborhood of 
${\rm Supp} \Gamma _Y\cap \pi^{-1}_Y(U)$. 
Therefore, $C=g(C_0)$ is a curve on $X$ such that 
$C\cap {\rm Nklt}(X, \Delta)$ is a point by Theorem 
\ref{x-thm1.6} (iv) with 
$0<-(K_X+\Delta)\cdot C\leq 1$. 
Hence we can construct a morphism 
\begin{equation*} 
f\colon \mathbb A^1\longrightarrow \left(X\setminus {\rm Nklt}(X, \Delta)\right)
\cap \pi^{-1}(P)
\end{equation*} 
such that $f(\mathbb A^1)= C\cap (X\setminus {\rm Nklt}(X, \Delta))$. 
This is a desired morphism. 
\end{case}
\begin{case}\label{x-case1.5.3} 
By Cases \ref{x-case1.5.1} and \ref{x-case1.5.2}, 
it is sufficient to treat the following situation. 
There exist a Zariski open neighborhood $U$ of $P$ and 
$m\geq 0$ such that 
\begin{itemize}
\item[(i)] for any $i\leq m$, the map 
$Y\dashrightarrow Y_i$ is an isomorphism on some 
open subset containing ${\rm Supp} \Gamma_Y\cap \pi^{-1}_Y(U)$, and 
\item[(ii)] there is a curve $C'\subset Y_m$ contracted by the extremal 
birational contraction of the $(K_Y+\Delta_Y-\Gamma_Y)$-minimal model 
program over $S$ such that 
$C'\cap {\rm Supp} \Gamma_{Y_m}\ne \emptyset$ and 
$\pi_{Y_m}(C')=P$. 
\end{itemize}
Essentially the same argument 
as in Case \ref{x-case1.5.2} 
above works with some minor modifications. 
Let us see it more precisely. 
Let $\varphi\colon Y_m\to Z$ be the extremal birational 
contraction in (ii). Let $\pi_Z\colon Z\to S$ be the structure 
morphism. Then, by Lemma \ref{x-lem6.1}, $\Gamma_{Y_m}$ is ample 
over $\pi^{-1}_Z(U)$ and ${\rm Supp} \Gamma _{Y_m}\cap 
\pi^{-1}_{Y_m}(U)$ is finite over $\pi^{-1}_Z(U)$. 
By Lemmas \ref{x-lem6.1} and \ref{x-lem6.2}, 
we see that ${\rm Supp} \Gamma _{Y_m}$ 
is connected in a neighborhood of any fiber of $\varphi$ 
on $\pi^{-1}_Z(U)$. 
Therefore, $C'\cap {\rm Supp} \Gamma _{Y_m}$ is a point. 
By Lemma \ref{x-lem6.1} again, $\dim \varphi^{-1}(z)\leq 1$ holds 
for every closed point $z\in \pi^{-1}_Z(U)$. 
By Lemma \ref{x-lem6.2} , 
$R^i\varphi_*\mathcal O_{Y_m}=0$ 
holds on $\pi^{-1}_Z(U)$ for every $i>0$. 
Thus, by \cite[Lemma 8.2]{fujino}, 
$C'\simeq \mathbb P^1$ with $-(K_{Y_m}+\Delta_{Y_m})\cdot C'\leq 1$. 
By the negativity lemma, 
we can check that 
\begin{equation*}
-(K_{Y_0}+\Delta_{Y_0})\cdot C_0 
\leq -(K_{Y_m}+\Delta_{Y_m})\cdot C'\leq 1
\end{equation*}
holds, where $C_0$ is the strict transform of $C'$ on $Y_0=Y$. 
We note that 
$C_0\cap {\rm Nklt}(Y_0, \Delta_{Y_0})=C_0\cap 
{\rm Supp}\Gamma _Y$ is a point since $Y=Y_0\dashrightarrow 
Y_m$ is an isomorphism in a neighborhood of ${\rm Supp}\Gamma _Y 
\cap \pi^{-1}_Y(U)$.  
Hence, by the same argument as in Case \ref{x-case1.5.2} above, we get 
a desired morphism 
\begin{equation*} 
f\colon  \mathbb A^1\longrightarrow 
\left(X\setminus {\rm Nklt}(X, \Delta)\right)\cap 
\pi^{-1}(P). 
\end{equation*} 
\end{case}
We finish the proof of Theorem \ref{x-thm1.7}. 
\end{proof}

We close this section with the following generalization of 
\cite[Theorem 9.2]{fujino}. We will use it in the proof of 
Theorem \ref{x-thm1.8}.  

\begin{thm}\label{x-thm6.3}
Let $\pi\colon X\to S$ be a proper surjective morphism 
from a normal quasi-projective variety $X$ onto a scheme 
$S$. 
Let $\mathcal P$ be an $\mathbb R$-Cartier 
divisor on $X$ and let $H$ be an ample Cartier divisor 
on $X$. 
Let $\Sigma$ be a closed subset of $X$ and let 
$P$ be a closed point of $S$ such that 
there exists a curve $C^\dag\subset \pi^{-1}(P)$ with $\Sigma 
\cap C^\dag\ne \emptyset$. 
Assume that $-\mathcal P$ is $\pi$-ample and 
that $\pi\colon  \Sigma\to \pi(\Sigma)$ is finite. 
We further assume 
\begin{itemize}
\item[\em{(i)}] $\{\varepsilon _i\}_{i=1}^\infty$ is 
a set of positive real numbers with $\varepsilon _i\searrow 
0$ for $i\nearrow \infty$, and 
\item[\em{(ii)}] for every $i$, there exists an effective $\mathbb R$-divisor 
$\Delta_i$ on $X$ such that 
\begin{equation*} 
\mathcal P+\varepsilon _i H\sim _{\mathbb R} K_X+\Delta_i
\end{equation*} 
and that 
\begin{equation*} 
\Sigma ={\rm Nklt}(X, \Delta_i)
\end{equation*} 
holds set theoretically. 
\end{itemize} 
Then there exists a non-constant morphism 
\begin{equation*} 
f\colon \mathbb A^1\longrightarrow (X\setminus \Sigma)\cap 
\pi^{-1}(P) 
\end{equation*} 
such that 
the curve $C$, the closure of $f(\mathbb A^1)$ in $X$, 
is a rational curve with 
\begin{equation*} 
0< -\mathcal P\cdot C\leq 1
\end{equation*} 
\end{thm}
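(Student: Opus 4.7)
The plan is to apply Theorem~\ref{x-thm1.6} to each perturbed pair $(X,\Delta_i)$ and then extract a subsequential limit as $\varepsilon_i\searrow 0$. For every sufficiently large $i$, the divisor $-(K_X+\Delta_i)\sim_{\mathbb{R}}-\mathcal{P}-\varepsilon_iH$ is $\pi$-ample: $-\mathcal{P}$ is $\pi$-ample by hypothesis, $H$ is ample, and $\pi$-ampleness is an open condition on the relative N\'eron--Severi space. Combined with $\Nklt(X,\Delta_i)=\Sigma$ being finite over $\pi(\Sigma)$ and with the existence of $C^{\dag}\subset\pi^{-1}(P)$ meeting $\Sigma$, Theorem~\ref{x-thm1.6} yields a non-constant morphism $f_i\colon\mathbb{A}^1\to(X\setminus\Sigma)\cap\pi^{-1}(P)$ whose closure $C_i\subset X$ is a rational curve with $C_i\cap\Sigma\neq\emptyset$ and
\[
0<-(K_X+\Delta_i)\cdot C_i=-\mathcal{P}\cdot C_i-\varepsilon_iH\cdot C_i\leq 1.
\]

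Since $\pi\colon\Sigma\to\pi(\Sigma)$ is finite, $\Sigma\cap\pi^{-1}(P)$ is a finite set of closed points, so after passing to a subsequence every $C_i$ passes through a fixed closed point $x\in\Sigma\cap\pi^{-1}(P)$. Granting that $\{H\cdot C_i\}$ stays bounded, the numerical classes $[C_i]$ lie in a bounded subset of $N_1(\pi^{-1}(P))_{\mathbb{R}}$, and by discreteness of effective $1$-cycle classes of bounded $H$-degree on the proper scheme $\pi^{-1}(P)$, a further subsequence has $[C_i]=[C]$ constant. Setting $\beta=-\mathcal{P}\cdot[C]$ and $\gamma=H\cdot[C]$, the displayed bound becomes $0<\beta-\varepsilon_i\gamma\leq 1$ for all $i$ in this subsequence; letting $\varepsilon_i\searrow 0$ yields $0<\beta\leq 1$. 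Any $f_i$ in the resulting subsequence is then the desired morphism, with closure $C_i$ a rational curve satisfying $0<-\mathcal{P}\cdot C_i\leq 1$.

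The main obstacle is the uniform boundedness of $H\cdot C_i$. A natural approach is to revisit the proof of Theorem~\ref{x-thm1.6}: each $C_i$ is produced as the push-forward to $X$ of a curve $\mathbb{P}^1\cong C_{k,i}$ living on the output of a $(K_Y+\Delta_{Y,i}-\Gamma_{Y,i})$-minimal model program on a crepant model of $(X,\Delta_i)$, with the length inequality coming from Lemmas~\ref{x-lem6.1}--\ref{x-lem6.2}. Because the common non-klt locus is always $\Sigma$, the geometry of these models near $\Sigma$ should be controllable uniformly in $i$; alternatively, if $H\cdot C_i\to\infty$ one could try a Mori-style bend-and-break for the pointed normalizations $(\widetilde{C}_i,\infty)\cong(\mathbb{P}^1,\infty)$ with $\infty\mapsto x$, producing rational curves of strictly smaller $H$-degree still meeting $\Sigma\cap\pi^{-1}(P)$, and contradicting unboundedness. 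Making either approach rigorous is the principal technical point of the proof.
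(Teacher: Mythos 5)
Your overall strategy---apply Theorem \ref{x-thm1.6} to each perturbed pair $(X,\Delta_i)$, bound the $H$-degrees of the resulting curves, pass to a subsequence with constant numerical class, and let $\varepsilon_i\searrow 0$---is exactly the perturbation-and-limit scheme behind the paper's proof, which simply runs the proof of \cite[Theorem 9.2]{fujino} with Theorem \ref{x-thm1.6} in place of \cite[Theorem 1.8]{fujino}. The first part is fine: for $i\gg 0$ the divisor $-(K_X+\Delta_i)\sim_{\mathbb R}-\mathcal P-\varepsilon_iH$ is $\pi$-ample by openness of the relative ample cone, so Theorem \ref{x-thm1.6} produces $f_i\colon\mathbb A^1\to (X\setminus\Sigma)\cap\pi^{-1}(P)$ whose closure $C_i$ satisfies $0<-(K_X+\Delta_i)\cdot C_i\le 1$, and your finiteness-of-classes and limiting arguments are correct \emph{once} $H\cdot C_i$ is uniformly bounded.

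However, you leave precisely that step unproven, flagging it as ``the principal technical point'' and offering only two sketched alternatives (uniform control of the minimal model programs inside the proof of Theorem \ref{x-thm1.6}, or a bend-and-break argument), neither of which is carried out; as written the proposal therefore does not prove the theorem, since from $-(K_X+\Delta_i)\cdot C_i\le 1$ alone one only gets $-\mathcal P\cdot C_i\le 1+\varepsilon_iH\cdot C_i$, which is vacuous if $H\cdot C_i$ grows like $\varepsilon_i^{-1}$. The gap is real but needs none of the machinery you suggest: fix $i_0$ with $-\mathcal P-\varepsilon_{i_0}H$ $\pi$-ample (openness again). Since each $C_i$ lies in the fiber $\pi^{-1}(P)$, we have $(-\mathcal P-\varepsilon_{i_0}H)\cdot C_i>0$, hence for every $i\ge i_0$
\[
1\ \ge\ -(K_X+\Delta_i)\cdot C_i\ =\ (-\mathcal P-\varepsilon_{i_0}H)\cdot C_i+(\varepsilon_{i_0}-\varepsilon_i)\,H\cdot C_i\ \ge\ (\varepsilon_{i_0}-\varepsilon_i)\,H\cdot C_i,
\]
so $H\cdot C_i\le 2/\varepsilon_{i_0}$ as soon as $\varepsilon_i\le \varepsilon_{i_0}/2$. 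Inserting this one-line estimate closes your argument and makes it essentially the intended proof.
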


\begin{proof}
The proof of \cite[Theorem 9.2]{fujino} works as well in this case 
by replacing \cite[Theorem 1.8]{fujino} in the 
proof of \cite[Theorem 9.2]{fujino} with Theorem \ref{x-thm1.7}. 
\end{proof}

\section{Quick review of quasi-log schemes}\label{x-sec7}

In this section, we collect some basic definitions 
of the theory of quasi-log schemes. 
For the details, see \cite[Chapter 6]{fujino-foundations} and 
\cite{fujino}. 
Let us start with the definition of {\em{globally embedded simple normal 
crossing pairs}}.  

\begin{defn}[{Globally embedded simple normal crossing 
pairs, see \cite[Definition 6.2.1]{fujino-foundations}}]\label{x-def7.1} 
Let $Y$ be a simple normal crossing divisor 
on a smooth 
variety $M$ and let $B$ be an $\mathbb R$-divisor 
on $M$ such that ${\rm Supp} (B+Y)$ is a simple 
normal crossing divisor on $M$ and that 
$B$ and $Y$ have no common irreducible components. 
We put $B_Y=B|_Y$ and consider the pair $(Y, B_Y)$. 
We call $(Y, B_Y)$ a {\em{globally embedded simple normal 
crossing pair}} and $M$ the {\em{ambient space}} 
of $(Y, B_Y)$. A {\em{stratum}} of $(Y, B_Y)$ is a log canonical  
center of $(M, Y+B)$ that is contained in $Y$. 
\end{defn}

Let us recall the definition of {\em{quasi-log schemes}}. 

\begin{defn}[{Quasi-log 
schemes, see \cite[Definition 6.2.2]{fujino-foundations}}]\label{x-def7.2}
A {\em{quasi-log scheme}} is a scheme $X$ endowed with an 
$\mathbb R$-Cartier divisor 
(or $\mathbb R$-line bundle) 
$\omega$ on $X$, a closed subscheme 
$X_{-\infty}\subsetneq X$, and a finite collection $\{C\}$ of reduced 
and irreducible subschemes of $X$ such that there is a 
proper morphism $f\colon (Y, B_Y)\to X$ from a globally 
embedded simple 
normal crossing pair satisfying the following properties: 
\begin{itemize}
\item[(1)] $f^*\omega\sim_{\mathbb R}K_Y+B_Y$. 
\item[(2)] The natural map 
$\mathcal O_X
\to f_*\mathcal O_Y(\lceil -(B_Y^{<1})\rceil)$ 
induces an isomorphism 
\begin{equation*} 
\mathcal I_{X_{-\infty}}\overset{\simeq}{\longrightarrow} 
f_*\mathcal O_Y(\lceil 
-(B_Y^{<1})\rceil-\lfloor B_Y^{>1}\rfloor),  
\end{equation*} 
where $\mathcal I_{X_{-\infty}}$ is the defining ideal sheaf of 
$X_{-\infty}$. 
\item[(3)] The collection of reduced and irreducible subschemes 
$\{C\}$ coincides with the images 
of the strata of $(Y, B_Y)$ that are not included in $X_{-\infty}$. 
\end{itemize}
We simply write $[X, \omega]$ to denote 
the above data 
\begin{equation*} 
\left(X, \omega, f\colon (Y, B_Y)\to X\right)
\end{equation*} 
if there is no risk of confusion. 
Note that a quasi-log scheme $[X, \omega]$ is 
the union of $\{C\}$ and $X_{-\infty}$. 
The reduced and irreducible subschemes $C$ 
are called the {\em{qlc strata}} of $[X, \omega]$, 
$X_{-\infty}$ is called the {\em{non-qlc locus}} 
of $[X, \omega]$, and $f\colon (Y, B_Y)\to X$ is 
called a {\em{quasi-log resolution}} 
of $[X, \omega]$. 
We sometimes use ${\rm Nqlc}(X, 
\omega)$ or 
\begin{equation*} 
{\rm Nqlc}(X, \omega, f\colon (Y, B_Y)\to X)
\end{equation*} 
to denote 
$X_{-\infty}$. 
If a qlc stratum $C$ of $[X, \omega]$ is not an 
irreducible component of $X$, then 
it is called a {\em{qlc center}} of $[X, \omega]$. 
\end{defn}

\begin{defn}[Open qlc strata]\label{x-def7.3}
Let $W$ be a qlc stratum of a quasi-log scheme $[X, \omega]$. 
We put 
\begin{equation*} 
U:=W\setminus \left\{ \left(W\cap {\rm Nqlc}(X, \omega)\right)\cup 
\bigcup_{W'}W'\right\}, 
\end{equation*} 
where $W'$ runs over qlc centers of $[X, \omega]$ strictly contained 
in $W$, and call it the {\em{open qlc stratum}} of 
$[X, \omega]$ associated to $W$. 
\end{defn}

\begin{defn}[${\rm Nqklt}(X, \omega)$]\label{x-def7.4}
Let $[X, \omega]$ be a quasi-log scheme. 
The union of ${\rm Nqlc}(X, \omega)$ and all qlc centers of $[X, \omega]$ 
is denoted by ${\rm Nqklt}(X, \omega)$. 
Note that if ${\rm Nqklt}(X, \omega)\ne {\rm Nqlc}(X, \omega)$ then 
$[{\rm Nqklt}(X, \omega), \omega|_{{\rm Nqklt}(X, \omega)}]$ 
naturally becomes a quasi-log scheme by adjunction 
(see \cite[Theorem 6.3.5 (i)]{fujino-foundations} 
and \cite[Theorem 4.6 (i)]{fujino}). 
\end{defn}

Although we do not treat applications 
of the theory of quasi-log schemes to normal pairs here, 
the following remark is very important. 

\begin{rem}\label{x-rem7.5}
Let $(X, \Delta)$ be a normal pair such that 
$\Delta$ is effective. 
Then $[X, K_X+\Delta]$ naturally becomes a quasi-log scheme such 
that ${\rm Nqlc}(X, K_X+\Delta)$ coincides with 
${\rm Nlc}(X, \Delta)$ and that 
$C$ is a qlc center of $[X, K_X+\Delta]$ 
if and only if $C$ is a log canonical center of $(X, \Delta)$. 
Hence ${\rm Nqklt}(X, K_X+\Delta)$ corresponds to 
${\rm Nklt}(X, \Delta)$. 
For the details, see \cite[6.4.1]{fujino-foundations} and 
\cite[Example 4.10]{fujino}. 
\end{rem}

\section{Proof of Theorems \ref{x-thm1.8} and 
\ref{x-thm1.9}}\label{x-sec8} 

In this section, we prove Theorems \ref{x-thm1.8} and 
\ref{x-thm1.9}. Let us start with the proof of Theorem \ref{x-thm1.8}. 

\begin{proof}[Proof of Theorem \ref{x-thm1.8}]
By Steps 1, 2, 3, and 4 in 
the proof of \cite[Theorem 1.6]{fujino}, 
we can reduce the problem to the case where 
$X$ is a normal variety such that 
$-\omega$ is $\pi$-ample and that 
$\pi\colon {\rm Nqklt}(X, \omega)\to \pi({\rm Nqklt}(X, \omega))$ is finite. 
By taking the Stein factorization, we may further assume that 
$\pi_*\mathcal O_X\simeq \mathcal O_S$. 
We put $\Sigma={\rm Nqklt}(X, \omega)$. 
It is sufficient to find a non-constant morphism 
\begin{equation*} 
f\colon \mathbb A^1\longrightarrow (X\setminus \Sigma)\cap 
\pi^{-1}(P)
\end{equation*} 
such that the curve $C$, the closure of $f(\mathbb A^1)$ in $X$, 
is a (possibly singular) rational curve satisfying 
$C\cap \Sigma\ne \emptyset$ with 
\begin{equation*} 
0<-\omega\cdot C\leq 1. 
\end{equation*}  
Without loss of generality, we may assume that 
$X$ and $S$ are quasi-projective by shrinking $S$ suitably. 
Hence we have the following properties: 
\begin{itemize}
\item[(a)] $\pi\colon X\to S$ is a projective morphism 
from a normal quasi-projective variety $X$ to a scheme $S$, 
\item[(b)] $-\omega$ is $\pi$-ample, and 
\item[(c)] $\pi\colon \Sigma\to \pi(\Sigma)$ is finite, 
where $\Sigma: ={\rm Nqklt}(X, \omega)$. 
\end{itemize}
Let $H$ be an ample Cartier divisor on $X$ 
and let $\{\varepsilon _i \}_{i=1}^\infty$ be 
a set of positive real numbers such 
that $\varepsilon _i\searrow 0$ for $i\nearrow \infty$. 
Then, by \cite[Theorem 1.10]{fujino}, 
we have: 
\begin{itemize}
\item[(d)] there exists an effective $\mathbb R$-divisor 
$\Delta_i$ on $X$ such that 
\begin{equation*} 
K_X+\Delta_i\sim _{\mathbb R} \omega+\varepsilon _i H
\end{equation*} 
with 
\begin{equation*} 
{\rm Nklt}(X, \Delta_i)=\Sigma
\end{equation*} 
for every $i$. 
\end{itemize}
Thus, by Theorem \ref{x-thm6.3}, 
we have a desired non-constant morphism 
\begin{equation*} 
f\colon  \mathbb A^1\longrightarrow 
\left(X\setminus {\rm Nqklt}(X, \omega)\right)
\cap \pi^{-1}(P). 
\end{equation*} 
We complete the proof. 
\end{proof}

Finally, we prove Theorem \ref{x-thm1.9}. 

\begin{proof}[Proof of Theorem \ref{x-thm1.9}]
We put $X'=\overline {U_j}\cup {\rm Nqlc}(X, \omega)$. 
Then $[X', \omega']$ naturally becomes a quasi-log 
scheme by adjunction, 
where $\omega'=\omega|_{X'}$ (see 
\cite[Theorem 6.3.5 (i)]{fujino-foundations} and 
\cite[Theorem 4.6 (i)]{fujino}). 
The induced morphism 
$\varphi_{R_j}\colon X'\to \varphi_{R_j}(X')$ 
is denoted by $\pi'\colon X'\to S'$. 
Then, $-\omega'$ is $\pi'$-ample, 
\begin{equation*} 
\pi'\colon {\rm Nqklt}(X', \omega')\to \pi'({\rm Nqklt}(X', \omega')) 
\end{equation*} 
is finite, and there is a curve $C^\dag\subset (\pi')^{-1}(P)$ with 
${\rm Nqklt}(X', \omega')\cap C^\dag\ne \emptyset$. 
Hence, by Theorem \ref{x-thm1.8}, 
there exists a non-constant morphism 
\begin{equation*} 
f_j\colon \mathbb A^1\longrightarrow U_j \cap \varphi^{-1}_{R_j}(P)
\end{equation*} 
with the desired properties. 
\end{proof}

\begin{rem}\label{x-rem8.1}
We use the same notation as in the proof of 
Theorem \ref{x-thm1.9}. 
Since $\varphi:=\varphi_{R_j}\colon X\to V:=\varphi_{R_j}(X)$ 
is a contraction morphism associated to $R_j$, 
the natural isomorphism $\varphi_*\mathcal O_X
\simeq \mathcal O_V$ holds (see 
\cite[Theorem 6.7.3 (ii)]{fujino-foundations} and 
\cite[Theorem 4.17 (ii)]{fujino}). 
Let $\mathcal I_{X'}$ be the defining ideal sheaf of $X'$ on $X$. 
Then, by the vanishing theorem (see 
\cite[Theorem 6.3.5 (ii)]{fujino-foundations} and 
\cite[Theorem 4.6 (ii)]{fujino}), 
we have $R^i\varphi_*\mathcal I_{X'}=0$ for 
every $i>0$ since $-\omega$ is $\varphi$-ample. 
Thus we obtain the following short exact sequence 
\begin{equation*} 
0\to \varphi_*\mathcal I_{X'}\to \varphi_*\mathcal O_X
\simeq \mathcal O_V\to \varphi_*\mathcal O_{X'}\to 0. 
\end{equation*} 
This means that $\varphi_{R_j}\colon X'\to \varphi_{R_j}(X')$ 
has connected fibers. 
Therefore, if $Q$ is a close point of $\pi'\left({\rm Nqklt}(X', 
\omega')\right)$ with $\dim \pi'^{-1}(Q)\geq 1$, then 
we can always find a curve $\widetilde C$ such that 
$\varphi_{R_j}(\widetilde C)=Q$, $\widetilde C\not\subset 
U_j$, and $\widetilde C\subset \overline {U_j}$. 
\end{rem}


\end{document}